\theoremstyle{plain}
\newtheorem{theorem}{Theorem}[section]
\newtheorem{proposition}[theorem]{Proposition}
\newtheorem*{theorem*}{Theorem}
\newtheorem{corollary}[theorem]{Corollary}
\newtheorem{lemma}[theorem]{Lemma}
\theoremstyle{definition}
\newtheorem{definition}[theorem]{Definition}
\newtheorem{example}[theorem]{Example}
\newtheorem{remark}[theorem]{Remark}
\newcommand*{\Nset}{\mathbb{N}} 
\newcommand*{\Zset}{\mathbb{Z}} 
\newcommand*{\Rset}{\mathbb{R}}  
\newcommand\twoheaduparrow{\mathrel{\rotatebox[origin=c]{90}{$\twoheadrightarrow$}}}
\newcommand\twoheaddownarrow{\mathrel{\rotatebox[origin=c]{270}{$\twoheadrightarrow$}}}
\newcommand{\Mod}{\mathbf{Mod}}
\newcommand{\posetC}{\mathbf{P}}
\newcommand{\Sh}{\mathbf{Sh}}
\newcommand*{\grHom}{\underline{\Hom}} 
\newcommand*{\grExt}{\underline{\Ext}} 
\newcommand*{\grTor}{\underline{\Tor}} 
\newcommand*{\Eph}{\mathbf{Eph}} 
\newcommand*{\Trans}{\mathbf{Trans}} 
\newcommand*{\etal}{\emph{et al.\ }}
\DeclareMathOperator{\id}{id}
\DeclareMathOperator{\Hom}{Hom}
\DeclareMathOperator{\Fun}{Fun}
\DeclareMathOperator{\Open}{Open}
\DeclareMathOperator{\res}{res}
\DeclareMathOperator{\Ker}{Ker}
\DeclareMathOperator{\ImF}{Im}
\DeclareMathOperator{\Ext}{Ext}
\DeclareMathOperator{\Tor}{Tor}
\DeclareMathOperator{\op}{op}
\DeclareMathOperator{\inc}{inc}
\DeclareMathOperator{\loc}{loc}
\DeclareMathOperator{\sect}{sec}
\DeclareMathOperator{\supp}{supp}
\DeclareMathOperator{\Proj}{Proj}
\DeclareMathOperator{\Int}{Int}
\DeclareMathOperator{\cl}{cl}
\DeclareMathOperator{\Rad}{rad}
\DeclareMathOperator{\soc}{soc}
\DeclareMathOperator{\Ann}{Ann}
\DeclareMathOperator{\topf}{top}
\DeclareMathOperator{\coloc}{coloc}
\DeclareMathOperator{\cosec}{cosec}
\begin{document}

\title{Ephemeral Modules and Scott Sheaves on a Continuous Poset}
\author{Manu Harsu\footnote{Manu Harsu was supported by the Vilho, Yrjö and Kalle
Väisälä Foundation of the Finnish Academy of Science and Letters.
} \quad Eero Hyry \\
manu.harsu@tuni.fi \quad eero.hyry@tuni.fi}

\maketitle


\begin{abstract}
\noindent
By utilizing domain theory, we generalize the notion of an ephemeral module to the so-called continuous posets. 
We investigate the quotient category of persistence modules by the Serre subcategory of ephemeral modules and show that it is equivalent to the category of sheaves on the Scott topology. 
Furthermore, we study the metric properties of persistence modules via this equivalence. 
\end{abstract}


\section{Introduction}

This article is motivated by recent progress in topological data analysis.
Topological data analysis studies the shape of data using methods of algebraic topology. 
One of the main tools of topological data analysis is persistent homology. 
In persistent homology, one starts with a noisy data set, transforms it into a filtration of a topological space indexed by a poset, 
and then applies homology with coefficients in a field 
to obtain a diagram of vector spaces, which is often called a \emph{persistence module}. 
The homological features of the data that persist along the filtration are considered essential. 
Features that have zero ``lifespans'' in this sense are called \emph{ephemeral}. 
They are usually disregarded in topological data analysis. 
The notion of interleaving gives a way of measuring distances between persistence modules 
\cites{Bauer_2020, Bubenik_2014, desilva2018theory}. 
A persistence module over $\Rset$ can be regarded \emph{ephemeral} if its interleaving distance from the zero module vanishes. 
All this led Chazal \etal \cites{Obs, chazal2013structure} to investigate the quotient category of persistence modules by the Serre subcategory of ephemeral modules, which they called the category of \emph{observable modules}. 
Berkouk and Petit \cite{Berkouk_2021} defined ephemeral modules for $\Rset^n$. 
The main purpose of this article is to extend these investigations to the so-called continuous posets. 
Continuous posets include, for example, all discrete posets, as well as $\Rset^n$ and its down-sets. 
Also, products of continuous posets are continuous. 
Continuous posets are ubiquitous in the field of domain theory. 
Domain theory is a branch of order theory, which up to now, has mostly been used in logics and computer science. 
Domain theory provides abstract notions of approximation and convergence. 
We believe that this gives more insight even in the study of persistence over $\Rset^n$. 

Curry \cite{Curry1} introduced sheaf theoretic methods to topological data analysis. 
These methods have led to many fruitful advances (see, for example, \cites{Berkouk_2021, kashiwara2018persistent, Miller3}).
Curry equipped the poset with the Alexandrov topology.
We will consider sheaves on the Scott topology, which we call \emph{Scott sheaves} for short. 
On $\Rset^n$, Scott sheaves coincide with $\gamma$-sheaves introduced by Kashiwara and Schapira in \cite{SheavesOnManifolds}. 
In turn, on a discrete poset, the Scott topology is just the Alexandrov topology, making discrete posets uninteresting for our investigations. 
The interplay of persistence modules and Scott sheaves is a central subject in this article. 
In particular, we show that the category of Scott sheaves is equivalent 
to the quotient category of \emph{the category of persistence modules by the subcategory of ephemeral modules}.  
On the algebraic side, Scott sheaves turn out to correspond to semi-continuous modules. 
Semi-continuous modules emerge, 
for example, in symplectic topology.
Some authors (e.g.\ \cite{polterovich2015autonomous}) go so far as to restrict their study of persistence modules to semi-continuous modules. 

We now explain our results in more detail. 
Let $P$ be a poset. 
In domain theory, a poset is called continuous if every element is the directed supremum of the elements approximating it. 
The \emph{approximation} or \emph{way-below} relation $\ll$ is a binary relation on $P$, stronger than the usual $\le$. 
One says that $x$ approximates $y$ if, for all directed sets $D$ with $y \le \sup D$, there exists $d \in D$ such that $x \le d$.
We sometimes say that a directed set $D \subseteq P$ \emph{converges} to $p \in P$ if $\sup D = p$. 
An element $x \in P$ is called \emph{compact} or \emph{isolated from below} if $x \ll x$. 
Recall that a subset is open in the Alexandrov topology if and only if it is an up-set. 
In the Scott topology, a subset $U \subseteq P$ is declared to be open if it is an up-set and $D \cap U \not= \emptyset$ for all directed sets $D$ with $\sup D \in U$. 
In particular, the Scott topology is finer than the Alexandrov topology, 
implying that the identity map $j \colon P^a \to P^{\sigma}$ is continuous. 
Here $P^a$ and $P^{\sigma}$ denote the poset $P$ endowed with the Alexandrov topology and the Scott topology, 
respectively. 

Let $k$ be a commutative ring with unity. 
A persistence module is a functor from $P$ interpreted as a category to the category of $k$-modules $\Mod$. 
The category of persistence modules over $P$ is denoted by $\Fun(\posetC, \Mod)$. 
If $P$ is a partially ordered abelian group, we often consider a persistence module as a $P$-graded $k[U_0]$-module \cite[Theorem 2.21]{Bubenik_2021}. 
Here $U_0 = \{ x \in P \mid x \ge 0 \}$ is a basic up-set. 
In addition to persistence modules, 
we also investigate the categories of sheaves $\Sh(P^a)$ and $\Sh(P^{\sigma})$. 
It is well-known that the category $\Sh(P^a)$ is equivalent 
to the category $\Fun(\posetC, \Mod)$ \cite[Theorem 4.2.10]{Curry1}. 
We now observe in Proposition \ref{FullyFaithfulAdjointCombo} that if $P$ is a continuous poset, then the direct image functor $j_* \colon \Sh(P^a) \to \Sh(P^{\sigma})$ has, besides the left adjoint $j^*$, also a right adjoint $j^!$. 
The situation can be summarized as follows:  
\[
\begin{tikzcd}
  \Fun(\posetC, \Mod) \arrow{r}{\simeq} & 
  \Sh(P^a) \arrow{l} \arrow[r,"{j_*}" description]  & 
  \Sh(P^{\sigma}). \arrow[l,"{j^*}" description, bend right] \arrow[l,"{j^!}" description, bend left] 
\end{tikzcd}
\]

For a persistence module $M$ over $P$ and $p \in P$, we set 
\[
  \underline{M}_p = \varprojlim_{x \gg p} M_x \quad \text{and} \quad \overline{M}_p = \varinjlim_{x \ll p} M_x. 
\]
This gives us two functors $\underline{(-)}, \, \overline{(-)} \colon \Fun(\posetC, \Mod) \to \Fun(\posetC, \Mod)$. 
In the case of a linearly ordered poset, these functors have been studied, for example, in \cites{Obs, Schmahl_2022, Scoccola2020LocallyPC}. 
When the poset is a real polyhedral group, the persistence modules $\overline{M}$ and $\underline{M}$ are also related to the upper and lower boundaries introduced by Miller in \cite[Definitions 3.11 and 11.1]{Miller1} (see Remark \ref{Rmk:UpperLowerBoundary}). 
Given a convex subset $I \subseteq P$, the \emph{indicator module} $k[I]$ assigns $k$ to every $p \in I$ with identity maps on $k$. 
Interestingly, it turns out in Proposition \ref{prop:IntervalIntClosure} that in the case of the up-set and down-set indicator modules, the functors $\underline{(-)}$ and $\overline{(-)}$ correspond to taking the topological interior and closure (cf.\  \cite[Lemma 3.20]{Miller1}). 
The persistence module $M$ is \emph{upper semi-continuous} (resp.\ \emph{lower semi-continuous}), 
if the canonical morphism $M \to \underline{M}$ (resp.\ $\overline{M} \to M$) is an isomorphism. 
In Theorem \ref{EqvUpperSemiCont}, we show that the full subcategories of upper semi-continuous modules and lower semi-continuous modules are both equivalent to the category of Scott sheaves. 

An immediate example of lower semi-continuity is provided by finitely Scott encoded modules (see Proposition \ref{FinScottEncodedAreLSemiCont}). 
We call here a persistence module $M$ over $P$ \emph{finitely Scott encoded} 
if there is a Scott-continuous map $f \colon P \to F$ to a finite poset $F$ 
and a pointwise finite-dimensional persistence module $N$ over $F$ (i.e.\ $N_x$ is finite-dimensional for all $x \in F$) 
such that $M = f^* N$. 
Note that a map $f \colon X \to \Rset$, where $X$ is a topological space, is \emph{lower semi-continuous} in the classical sense if and only if $f$ is continuous with respect to the Scott topology of $\Rset$. 

We say that a persistence module $M$ is \emph{ephemeral} if $M(p \le q) = 0$ for all $p \ll q$ (Definition \ref{def:Eph}). 
We give in Theorem \ref{meagerSupp} a new characterization for ephemeral modules in terms of meager sets. 
We call a subset $S \subseteq P$ is \emph{meager} if $x \not\in S$ or $y \not\in S$ for all $x \ll y$. 
For example, a \emph{staircase} ({\cite[p.\ 51]{Miller4}}) i.e.\ the topological boundary of the set 
\[
  \{ \bm{x} \in \Rset^n \mid \bm{x} \ge \bm{v} \text{ for some } \bm{v} \in V \}, 
\]
where $V \subseteq \Nset^n$ is a set of pairwise incomparable elements, is meager. 
A persistence module $M$ over $P$ is now ephemeral 
if and only if the support $\supp(m)$ is meager for all $p \in P$ and $m \in M_p$. 

In Theorem \ref{EphWayBelow}, we make the crucial observation that over a continuous poset $P$, 
a persistence module $M$ is ephemeral if and only if $j_* M = 0$. 
In fact, this was used as the definition of an ephemeral module in \cite{Berkouk_2021}. 
Right away, it follows that the full subcategory of ephemeral modules $\Eph$ is a Serre subcategory of the category of persistence modules. 
This result, with the general framework of Serre subcategories and the corresponding quotient categories, 
permits us to consider the quotient category of \emph{the category of persistence modules by the subcategory of ephemeral modules}. 
In Theorem \ref{Eqv1}, 
we prove that this quotient is equivalent to the category of Scott sheaves. 
As a consequence, it is also equivalent with the categories of lower semi-continuous and upper semi-continuous modules. 
We observe in Proposition \ref{Prop:IndecomInjScottSobr2} that in the case of $\Rset^n$, the indecomposable injectives of the quotient category are in a one-to-one correspondence with the upward-directed down-sets that are open with respect to the standard topology. 

Miller realized in \cite{Miller2} that the socles and tops of persistence modules can be used to functorially describe deaths and births of the homological properties of data. 
We propose new variants for these notions by defining for a persistence module $M$ over $P$ the \emph{Scott-socle} and the \emph{Scott-top}, denoted by $\soc^{\sigma}(M)$ and $\topf^{\sigma}(M)$, respectively (see Definition \ref{Def:RadSocTop}). 
At $p \in P$, the classical socle detects the elements of $M_p$ that vanish in all internal morphisms $M(p \le x)$ with $x > p$. 
In turn, the Scott-socle consists of those element of $M_p$ that vanish in internal morphisms $M(p \le x)$ with $x \gg p$. 
For example, consider the indicator module of the basic-down set $D_{\bm{p}} = \{ \bm{x} \in \Rset^n \mid \bm{x} \le \bm{p} \}$. 
The Scott-socle is now the indicator module $k[\partial D_{\bm{p}}]$ associated to the topological boundary of $D_{\bm{p}}$, whereas the usual socle is the skyscraper module $k_{\bm{p}}$ (see Proposition \ref{Prop:RadSocTopInterval} i)). 

We are also interested in their derived functors. 
In Proposition \ref{Prop:SemiContHomCrit}, we show that $M$ is upper semi-continuous if and only if $\soc^{\sigma}(M) = 0 = R^1 \soc^{\sigma}(M)$. 
Similarly, $M$ is lower semi-continuous if and only if $\topf^{\sigma}(M) = 0 = L_1\topf^{\sigma}(M)$. 
In Theorem \ref{Thm:SocTopConnection}, we can prove the formulas 
\[
  R^1 \soc^{\sigma}(\overline{M}) = \topf^{\sigma}(\underline{M})
  \quad \text{and} \quad 
  L_1 \topf^{\sigma}(\underline{M}) = \soc^{\sigma}(\overline{M}). 
\]
Curiously, if $P$ is also a partially ordered abelian group, then 
there is an exact sequence 
\[
  0 \to \soc^{\sigma}(M) \to M \to \bigoplus_{p \in P} \Gamma(\Int U_0, j_*M(p)) \to R^1\soc^{\sigma}(M) \to 0
\]
and isomorphisms 
\[
  R^{n + 1} \soc^{\sigma}(M) = \bigoplus_{p \in P} H^n(\Int U_0, j_*M(p)),  
\] 
for all $n \ge 1$, mimicking the celebrated Serre-Grothendieck correspondence between sheaf cohomology and local cohomology (see Proposition \ref{Prop:HereditarySoc} and Remark \ref{Rmk:LocalCoh}). 
In fact, by using a result of Jensen, we observe in Proposition \ref{Prop:SocVanish} that the functor $R^n \soc^{\sigma}$ vanishes for $n > 1$ under mild assumptions. 

Finally, in Definition \ref{ScottInterlea}, we define the interleaving distance for Scott sheaves.
To fully justify the transition to the quotient category $\Fun(\posetC, \Mod) / \Eph$, the quotient functor should then be an isometry 
i.e.\ $d_a(M, N) = d_{\sigma}(j_* M, j_* N)$ for all persistence modules $M$ and $N$ over $P$, 
where $d_a$ and $d_{\sigma}$ denote the interleaving distance of persistence modules 
and of Scott sheaves, respectively.
This is proved in Theorem \ref{Isom1}. 
The other functors $j^*, j^!, \underline{(-)}$ and $\overline{(-)}$ are shown to be isometries, too. 
We also investigate how ephemeral modules fit into this setting. 
In Corollary \ref{Cor:Isom}, we prove that a persistence module $M$ is ephemeral if and only if $d_a(M, 0) = 0$.

\section{Preliminaries on Domain Theory}\label{sec:Preliminary}

For a general reference on domain theory, see \cites{AbramskyJung, gierz2003, goubault-larrecq_2013}. 
Every poset $P$ can be interpreted as a category. 
The objects are the elements of $P$.  
For $p,q \in P$, the set of morphisms $\Hom(p,q)$ is just a singleton, if $p \le q$, and empty otherwise. 
We denote this category by $\posetC$. 

A subset $U \subseteq P$ is called an \emph{up-set} if $x \in U$ and $x \le y$ in $P$ imply $y \in U$. 
For every $p \in P$, there is the \emph{basic up-set} $U_p = \{ x \in P \mid x \ge p \}$. 
Dually, a subset $D \subseteq P$ is a \emph{down-set} if $x \in D$ and $y \le x$ in $P$ imply $y \in D$. 
For every $p \in P$, we have the \emph{basic down-set} $D_p = \{ x \in P \mid x \le p \}$. 
We say that a subset $D \subseteq P$ is \emph{(upward)-directed}, 
if it is non-empty, and for all $x,y \in D$ there exists $z \in D$ such that $x,y \le z$. 
Dually, a subset $D$ is \emph{downward-directed} 
if it is non-empty and for all $x,y \in D$ there exists $z \in D$ such that $z \le x,y$. 
A subset $C \subseteq P$ is \emph{convex} if $x,z \in C$ and $x \le y \le z$ in $P$ implies $y \in C$. 
A subset $C \subseteq P$ is \emph{connected} if, for all $x,y \in C$, there are $z_1, \dots, z_n \in C$ 
such that $z_1 = x$, $z_n = y$ and for all $i = 1, \dots, n - 1$ either $z_i \le z_{i + 1}$ or $z_{i} \ge z_{i + 1}$. 
The connected convex subsets are called \emph{intervals}. 

In the heart of domain theory, there is the so-called \emph{way-below relation}. 
For $x,y \in P$, we say that $x$ is \emph{way below} $y$ (or $y$ is \emph{way above} $x$)
if for all directed sets $D$ with $y \le \sup D$ (whenever the supremum exists) there exists $d \in D$ such that $x \le d$. 
Sometimes we say that $x$ \emph{approximates} $y$. 
We write $x \ll y$. 
An element $x \in P$ is called \emph{compact} (or \emph{isolated from below}) if $x \ll x$. 
The way-below relation has the following basic properties: 
\begin{enumerate}[ i)]
  \item if $x \ll y$, then $x \le y$; 
  \item if $x \le y$ and $y \ll z$, then $x \ll z$; 
  \item if $x \ll y$ and $y \le z$, then $x \ll z$; 
\end{enumerate}
for all $x,y,z \in P$ 
(see \cite[Proposition 5.1.4]{goubault-larrecq_2013}). 

The way-below relation is fully understood in many posets. 
The above basic properties imply that if every element is compact, then the way-below relation is the normal partial order relation. 
This includes all discrete posets. 
On the other hand, the way-below relation of $\Rset^n$ is given by the equivalence 
\[
    \bm{x} \ll \bm{y} \iff x_i < y_i \text{ for all } i \quad \quad (\bm{x}, \bm{y} \in \Rset^n). 
\]
However, the way-below relation in a subposet can be much more complicated. 
One such example is given in \cite[Proposition 5.1]{Keimel_2009}. 
For example, in the case of the positive orthant $\Rset^n_{\ge 0}$, there is an equivalence 
\[
  \bm{x} \ll \bm{y} \iff x_i < y_i \text{ or } x_i = 0 \text{ for all } i. 
\]
Thus, $(1,0) \ll (2,0)$ in $\Rset^2_{\ge 0}$ 
even though $(1,0)$ is \emph{not} way below $(2,0)$ in $\Rset^2$. 
When we restrict to the subposet, 
a directed set cannot converge to a boundary point (i.e.\ point that have zero in at least one coordinate) 
from outside of the subposet, which explains the difference. 

More generally, we can consider partial orders of $\Rset^n$ induced by cones. 
For a general reference about cones, see, for example, \cite{rockafellarConvex}.
Given a closed convex proper cone $\gamma \subseteq \Rset^n$, we define a partial order on $\Rset^n$ by setting 
\[
  \bm{x} \le_{\gamma} \bm{y} \iff \bm{y} + \gamma \subseteq \bm{x} + \gamma \iff \bm{y} - \bm{x} \in \gamma
\]
for all $\bm{x}, \bm{y} \in \Rset^n$. 
Domain theoretic properties of the poset $(\Rset^n, \le_{\gamma})$ is well-understood. 
For example, if $\gamma$ has a non-empty interior, then the corresponding way-below relation satisfies 
\[
  \bm{x} \ll_{\gamma} \bm{y} \iff \bm{y} - \bm{x} \in \Int \gamma
\]
for all $\bm{x}, \bm{y} \in \Rset^n$, where $\Int \gamma$ denotes the interior of $\gamma$ with respect to the standard topology (see \cite[Section 4]{Keimel_2009}). 

Given a subset $S \subseteq P$, we set 
\begin{align*}
  {\uparrow} S &= \{ x \in P \mid x \ge p \text{ for some } p \in S \} & {\downarrow} S &= \{ x \in P \mid x \le p \text{ for some } p \in S \} \\
  {\twoheaduparrow} S &= \{ x \in P \mid x \gg p \text{ for some } p \in S \} & {\twoheaddownarrow} S &= \{ x \in P \mid x \ll p \text{ for some } p \in S \}. 
\end{align*}
In particular, we use notation 
\begin{align*}
  {\uparrow} p &= \{ x \in P \mid x \ge p \} & {\downarrow} p &= \{ x \in P \mid x \le p \} \\
  {\twoheaduparrow} p &= \{ x \in P \mid x \gg p \} & {\twoheaddownarrow} p &= \{ x \in P \mid x \ll p \}. 
\end{align*}
for all $p \in P$. 
The basic properties of the way-below relation imply that ${\twoheaduparrow} p$ is an up-set 
and ${\twoheaduparrow} p \subseteq  {\uparrow} p$. 
Dually, ${\twoheaddownarrow} p$ is a down-set, and ${\twoheaddownarrow} p \subseteq {\downarrow} p$. 
However, in the sequel, we will write $U_p$ and $D_p$ for the basic up- and down-sets. 
With these notations, we now define a poset $P$ to be \emph{continuous}, 
if for all $p \in P$ the set ${\twoheaddownarrow} p$ is upward-directed with supremum $p$. 
Continuous posets have the so-called \emph{interpolation property}: 
\[
  \text{for all } x \ll z \text{ there exist } y \in P \text{ such that } x \ll y \ll z 
\]
(see \cite[Proposition 5.1.15]{goubault-larrecq_2013}). 
This property is foundational for us, as all our major results follow from it in one way or another. 

There are plenty of continuous posets. 
For example, all discrete posets and linearly ordered posets (see \cite[Example 2.6]{XuContPoset}) are continuous. 
The continuous posets are also closed under products (see \cite[Proposition 5.1.53]{goubault-larrecq_2013}). 
For a closed convex proper cone $\gamma \subseteq \Rset^n$ with a non-empty interior, 
the poset $(\Rset^n, \le_{\gamma})$ is continuous (see \cite[Section 4]{Keimel_2009}). 
Especially, $\Rset^n$ is a continuous poset. 
The main result of the article \cite{Poncet_2022} (see especially p.\ 110) provides even more examples of continuous posets. 
In particular, every down-set of a continuous meet-semilattice is continuous itself. 
This also implies that every down-set of $\Rset^n$ is continuous.

Let us now move to consider two important topologies of a poset $P$. 
In the \emph{Alexandrov topology}, 
the open sets are the up-sets, and the closed sets are the down-sets. 
The collection of the basic up-sets $\{ U_p \}_{p \in P}$ is its natural basis. 
The second important topology is the Scott topology. 
The Scott topology of $P$ is now defined by declaring a subset $U \subseteq P$ to be \emph{Scott-open}, if 
\begin{enumerate}[ i)]
  \item $U$ is an up-set; 
  \item for every directed subset $D$, the intersection $D \cap U$ is non-empty whenever $\sup D$ exists and is in $U$; 
\end{enumerate}
(see \cite[Proposition 4.2.18]{goubault-larrecq_2013}). 
We use the notations $P^a$ and $P^{\sigma}$ for $P$ endowed with the Alexandrov topology and the Scott topology, respectively.  
Since every Scott-open set is an up-set, the Alexandrov topology is finer than the Scott topology. 
In particular, the identity map $j \colon P^a \to P^{\sigma}$ is continuous. 

A map $f \colon P \to Q$ between posets is order-preserving 
if and only if the corresponding map $f^a \colon P^a \to Q^a$ is continuous \cite[Lemma 2.4]{Bubenik_2021}. 
In addition, a map $f \colon P \to Q$ between posets is called \emph{Scott-continuous} 
if it is order-preserving and for all directed subsets $D \subseteq P$ we 
have $f(\sup D) = \sup f(D)$ whenever the supremum $\sup D$ exists. 
In \cite[Proposition 4.3.5]{goubault-larrecq_2013}, it is proven that a map $f \colon P \to Q$ is Scott-continuous 
if and only if the corresponding map $f^{\sigma} \colon P^{\sigma} \to Q^{\sigma}$ is continuous.

\begin{remark}\label{Rmk:DpScottClosed}
Let $P$ be a poset. 
A subset $F \subseteq P$ is closed in the Scott topology 
if and only if it is down-set and closed under directed supremums. 
In other words, if $D$ is directed and $D \subseteq F$, then $\sup D \in F$ whenever the supremum exists. 
The basic down-set $D_p$ is Scott-closed for every $p \in P$ (\cite[Remark II-1.4]{gierz2003}). 
\end{remark}

Even though the Scott topology can be defined for any poset, it is convenient to restrict to continuous posets. 
The interpolation property implies the following facts:

\begin{lemma}[{\cite[Proposition II-1.10]{gierz2003}}]\label{Fact:IntU}
Let $P$ be a continuous poset. 
\begin{enumerate}[ i)]
  \item An up-set $U \subseteq P$ is Scott-open if and only if for all $p \in U$ there is $x \in U$ such that $x \ll p$; 
  \item The collection $\{ {\twoheaduparrow} p \}_{p \in P}$ is a basis of the Scott topology; 
  \item With respect to the Scott topology, we have $\Int({\uparrow}p) = {\twoheaduparrow} p$ for all $p \in P$; 
  \item With respect to the Scott topology, 
    we have  $\Int U = \bigcup_{x \in U} {\twoheaduparrow} x$ for all up-sets $U \subseteq P$. 
\end{enumerate}
\end{lemma}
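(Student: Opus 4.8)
The plan is to prove part~i) first, since parts~ii)--iv) all reduce to it via the basic monotonicity properties of $\ll$ and the interpolation property. For the forward direction of~i), I would use continuity directly: if $U$ is Scott-open and $p \in U$, then ${\twoheaddownarrow} p$ is a directed set with supremum $p \in U$, so Scott-openness forces ${\twoheaddownarrow} p \cap U \neq \emptyset$, giving some $x \in U$ with $x \ll p$. For the converse, assume $U$ is an up-set with the stated property and let $D$ be directed with $\sup D = s \in U$; choosing $x \in U$ with $x \ll s = \sup D$, the definition of the way-below relation yields $d \in D$ with $x \le d$, and since $U$ is an up-set containing $x$ we get $d \in D \cap U$, so $U$ is Scott-open.

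For~ii), I would first verify that each ${\twoheaduparrow} p$ is Scott-open: it is an up-set (as noted in the excerpt), so by~i) it suffices to show that every $q$ with $p \ll q$ admits some $x$ with $p \ll x \ll q$, which is exactly the interpolation property. To see that $\{{\twoheaduparrow} p\}_{p \in P}$ is a basis, take a Scott-open $U$ and $q \in U$; by~i) there is $x \in U$ with $x \ll q$, hence $q \in {\twoheaduparrow} x$, and ${\twoheaduparrow} x \subseteq {\uparrow} x \subseteq U$ because $U$ is an up-set. That every point lies in some basic open set is the instance $U = P$ (or directly: ${\twoheaddownarrow} q$ is nonempty by continuity).

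Parts~iii) and~iv) are then immediate. For~iii): ${\twoheaduparrow} p$ is Scott-open by~ii) and satisfies ${\twoheaduparrow} p \subseteq {\uparrow} p$, so ${\twoheaduparrow} p \subseteq \Int({\uparrow} p)$; conversely, if $q \in \Int({\uparrow} p)$, then applying~i) to the Scott-open set $\Int({\uparrow} p)$ produces $x \in \Int({\uparrow} p) \subseteq {\uparrow} p$ with $x \ll q$, and $p \le x \ll q$ gives $p \ll q$. For~iv): each ${\twoheaduparrow} x$ with $x \in U$ is Scott-open and contained in ${\uparrow} x \subseteq U$, so $\bigcup_{x \in U} {\twoheaduparrow} x \subseteq \Int U$; conversely any $p \in \Int U$ admits, by~i), some $x \in \Int U \subseteq U$ with $x \ll p$, so $p \in {\twoheaduparrow} x$.

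I do not expect a genuine obstacle here: the only non-formal ingredients are the continuity hypothesis (used in the forward direction of~i)) and the interpolation property (used to show ${\twoheaduparrow} p$ is Scott-open in~ii)), both already available. The one step that carries the real content is showing ${\twoheaduparrow} p$ is open in~ii) --- without interpolation this can fail --- and the rest is bookkeeping with the three monotonicity properties of $\ll$.
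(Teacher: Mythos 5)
Your argument is correct and complete, and it is the standard proof of \cite[Proposition II-1.10]{gierz2003}, which the paper simply cites without reproducing a proof. You correctly isolate the two load-bearing ingredients: continuity of $P$ (so that ${\twoheaddownarrow} p$ is directed with supremum $p$, giving the forward direction of i)) and the interpolation property (so that ${\twoheaduparrow} p$ satisfies the criterion of i) and is thus Scott-open, which is the heart of ii)); the reduction of iii) and iv) to i)--ii) via the monotonicity of $\ll$ and the up-set hypothesis is clean.
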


It is easy to see that a set is open in the Scott topology of $\Rset^n$
if and only if it is open in the Alexandrov topology and in the standard topology. 
In addition, we have two observations regarding the Scott topology of $\Rset^n$:

\begin{remark}
Let $P = \Rset^n$. 
For the sake of clarity, we will in this remark 
denote the interiors with respect to the Scott topology and the standard topology by $\Int^{\sigma}$ and $\Int^s$, respectively. 
By Lemma \ref{Fact:IntU} iv), we have $\Int^{\sigma} U = {\twoheaduparrow} U$ for all up-sets $U \subseteq \Rset^n$. 
This set is open with respect to the standard topology. 
In particular, $\Int^{\sigma} U \subseteq \Int^s U$. 
The converse inclusion is also true, 
because the set ${\twoheaduparrow} B(\bm{x}, r)$ is open with respect to the Scott topology for all $\bm{x} \in \Rset^n$ and $r > 0$. 
Note that we also have $\Int^{s} D = {\twoheaddownarrow} D$ for all down-sets $D \subseteq \Rset^n$. 

Similar results exist for the closures. 
For example, $\cl^{\sigma} D = \cl^s{D}$ for all down-sets $D \subseteq \Rset^n$. 
Here $\cl^{\sigma}$ and $\cl^s$ are the closures with respect to the Scott topology and standard topology, respectively. 
\end{remark}

\begin{remark}\label{Rmrk:RnIntClosBound}
Let $P = \Rset^n$. 
Observe that the interpolation property gives the formulas 
\[
  {\twoheaduparrow} \overline{U} = {\twoheaduparrow} U 
  \quad \text{and} \quad 
  {\twoheaddownarrow} \overline{D} = {\twoheaddownarrow} D. 
\]
for all up-sets $U \subseteq \Rset^n$ and down-sets $D \subseteq \Rset^n$, 
where the closures are taken with respect to the standard topology. 
Thus, for the boundaries with respect to the standard topology, we have 
\[
  \partial U = \overline{U} \setminus {\twoheaduparrow} U 
  \quad \text{and} \quad 
  \partial D = \overline{D} \setminus {\twoheaddownarrow} D. 
\]
It is also worth noting that 
\[
  \partial \overline{U} 
  = \overline{U} \setminus {\twoheaduparrow} \overline{U}
  = \overline{U} \setminus {\twoheaduparrow} U
  = \partial U 
  \quad \text{and} \quad 
  \partial \overline{D} 
  = \overline{D} \setminus {\twoheaddownarrow} \overline{D} 
  = \overline{D} \setminus {\twoheaddownarrow} D 
  = \partial D. 
\]
\end{remark}

\section{Persistence Modules and Sheaves}\label{PerModAndSheaves}

\subsection{Persistence Modules}

Let $P$ be a poset, let $k$ be a commutative ring with unity, and let $\Mod$ denote the category of $k$-modules. 
The (covariant) functors $M \colon \posetC \to \Mod$ are called \emph{persistence modules}. 
Thus, for every $p \in P$, there is a $k$-module $M_p$, and for every relation $p \le q$, there is a $k$-homomorphism 
$M(p \le q) \colon M_p \to M_q$, which is called an \emph{internal morphism}. 
These morphisms satisfy 
\[
  M(p \le p) = \id \quad \text{and} \quad M(q \le r) \circ M(p \le q) = M(p \le r)
\]
for all $p \le q \le r$ in $P$. 
The persistence modules form a category, where the morphisms are just natural transformations. 
A natural transformation $f \colon M \to N$ between persistence modules 
is a family of $k$-homomorphisms $(f_p)_{p \in P}$ that makes the diagram 
\[
\begin{tikzcd}
  M_p \arrow{d}{f_p} \arrow{r}{M(p \le q)} & M_q \arrow{d}{f_q} \\
  N_p \arrow{r}{N(p \le q)} & N_q
\end{tikzcd}
\]
commute for all $p \le q$ in $P$. 
The category of persistence modules is denoted by $\Fun(\posetC, \Mod)$. 
It is an abelian category and, in fact, a Grothendieck category \cite[Proposition 2.23]{Bubenik_2021}. 
All the important objects (kernels, cokernels, images, products, direct sums) are defined pointwise. 
For example, the kernel of a morphism $f \colon M \to N$ is a persistence module $\Ker f$ 
with $(\Ker f)_p = \Ker f_p$ for all $p \in P$. 
Similarly, a morphism of persistence modules $f \colon M \to N$ is a monomorphism (resp. epimorphism) 
if and only if every $f_p \colon M_p \to N_p$ is monomorphism (resp. epimorphism) for all $p \in P$. 
The \emph{support} of a persistence module $M$ is the set 
$\supp M = \{ p \in P \mid M_p \not= 0 \}$. 

We now recall the definition of tensor product. 
For more details, see for example \cite[Chapter 15]{RichterCat}. 
Let $P$ be a poset, and let $N \colon \posetC^{\op} \to \Mod$ and 
$M \colon \posetC \to \Mod$ be persistence modules. 
Their \emph{tensor product} is the $k$-module $N \otimes_{\posetC} M$ that is defined as the quotient 
\[
  \bigoplus_{p \in P} N_p \otimes M_p \ \Big/ \ \Bigr\langle N(x \le y)(n) \otimes m - n \otimes M(x \le y)(m) 
    \ \Bigr| \ n \in N_y, m \in M_x \Bigr\rangle. 
\]
This operation defines a bifunctor 
\[
  - \otimes_{\posetC} - \colon \Fun(\posetC^{\op}, \Mod) \times \Fun(\posetC, \Mod) \to \Mod. 
\]
The functors $- \otimes_{\posetC} M$ and $N \otimes_{\posetC} - $ are right exact, and we denote by $\Tor_n(-, -)$ their left derived functors.

Now we give an easy and important example of persistence modules.

\begin{example}
Let $P$ be a poset, and let $I \subseteq P$ be a convex set. 
The \emph{indicator module} $k[I]$ is the persistence module defined by setting 
\[
  k[I]_p = 
    \begin{cases}
      k, &\text{if } p \in I; \\
      0, &\text{otherwise}. 
    \end{cases}
\]
The internal morphism $k[I](p \le q)$ is the identity if $p,q \in I$, and zero otherwise. 
\end{example}

Recall that a persistence module $M$ over a poset $P$ is \emph{finitely generated} 
if there is an exact sequence $F \to M \to 0$ 
for some persistence module $F$ over $P$ that is isomorphic to a finite direct sum of 
basic up-set indicator modules $k[U_x]$ ($x \in P$). 
Dually, $M$ is \emph{finitely co-generated} if there is an exact sequence $0 \to M \to E$ 
for some persistence module $E$ over $P$ that is isomorphic to a finite direct sum of 
basic down-set indicator modules $k[D_x]$ ($x \in P$).

A tuple $(P, \le, +, 0)$ is called a \emph{partially ordered abelian group}, if the pair $(P, \le)$ is a poset, the triple $(P, +, 0)$ is an abelian group, and $x \le y$ implies $x + z \le y + z$ for all $x,y,z \in P$.
For a partially ordered abelian group, there exists an equivalence between the category of persistence modules and the category of $P$-graded $k[U_0]$-modules (see \cite[Theorem 2.21]{Bubenik_2021}).
By means of this equivalence, 
we often identify a persistence module $M$ with the $P$-graded $k[U_0]$-module 
$\bigoplus_{p \in P} M_{p}$. 
The scalar multiplication by a monomial $x^{s} \cdot \colon M_p \to M_{p + s}$ ($s \in U_0$) 
comes from the internal morphism $M(p \le p + s)$. 

Let $M$ and $N$ be $P$-graded $k[U_0]$-modules. 
For $s \in P$, we denote by $M(s)$ the $P$-graded $k[U_0]$-module such that $M(s)_p = M_{p + s}$ for all $p \in P$. 
The \emph{graded} $\Hom$ is the $P$-graded $k[U_0]$-module $\grHom_{k[U_0]}(N, M)$ that has components 
\[
  \grHom_{k[U_0]}(N, M)_s = \Hom(N, M(s)) = \Hom(N(-s), M) 
\]
for all $s \in P$. 
We denote by $\grExt_{k[U_0]}^n$ $(n \in \Nset)$ the right derived functors of $\grHom_{k[U_0]}$. 

Let $M$ and $N$ be $P$-graded $k[U_0]$-modules. 
The tensor product $N \otimes_{k[U_0]} M$ is naturally $P$-graded $k[U_0]$-module. 
The grading is given by 
\[
  (N \otimes_{k[U_0]} M)_{s} = \langle n \otimes m \mid n, m \text{ homogeneous and } \deg n + \deg m = s \rangle 
\]
for all $s \in P$. 
The left derived functors are $\grTor_n^{k[U_0]}$ $(n \in \Nset)$. 

If $N^{\op}$ denotes the persistence module over $P^{\op}$ defined by setting $N^{\op}_p = N_{-p}$ for all $p \in P$, 
then 
\[
  (N \otimes_{k[U_0]} M)_s 
  = N^{\op} \otimes_{\posetC} M(s) 
  = N^{\op}(-s) \otimes_{\posetC} M 
\]
for all $s \in P$.

Let $X$ be a topological space. 
We use the notation $\Sh(X)$ for the category of sheaves on $X$ with values in $\Mod$. 
It is an abelian category. 
For this, and other basic facts concerning sheaves, see \cite{Hart}.
Because we will use the notion of a \emph{sheaf on a basis} several times in this article, we give a short summary. 
Let $\mathcal{B}$ be a basis of $X$. 
It is a poset ordered by the inclusion. 
A contravariant functor $\mathcal{F} \colon \mathcal{B} \to \Mod$ is called a $\mathcal{B}$-\emph{sheaf} 
if it satisfies the gluing condition. 
Note that in the gluing condition, 
one considers all sets $B, B_1, B_2 \in \mathcal{B}$ satisfying $B \subseteq B_1 \cap B_2$, 
since the intersection $B_1 \cap B_2$ might not belong to the basis $\mathcal{B}$. 
Sheaves and $\mathcal{B}$-sheaves are essentially the same thing. 
Firstly, every sheaf defines a $\mathcal{B}$-sheaf by forgetting all the unnecessary section modules. 
Conversely, a $\mathcal{B}$-sheaf defines a unique sheaf, 
whose section modules are defined by the inverse limits 
\[
  \mathcal{F}(U) = \varprojlim_{\mathcal{B} \ni B \subseteq U} \mathcal{F}(B), 
\]
for an open subset $U \subseteq X$.

Let $P$ be a poset. 
To differentiate the sheaves on the Alexandrov topology from the sheaves on the Scott topology,
we call the former \emph{Alexandrov sheaves} and the latter \emph{Scott sheaves}. 
Let us start with the Alexandrov sheaves. 
Every point $p \in P$ has the smallest open neighborhood, namely $U_p$. 
This makes the stalks very simple, $\mathcal{F}_p = \mathcal{F}(U_p)$
for all (pre)sheaves $\mathcal{F}$ on $P^a$. 
The collection $\{ U_p \}_{p \in P}$ being a basis of the Alexandrov topology, 
the sections $\{ \mathcal{F}(U_p) \}_{p \in P}$ are enough to determine the sheaf. 
Thus, a sheaf on Alexandrov topology and a $\{ U_p \}$-sheaf are fundamentally the same thing. 
This fact can be seen as an equivalence of categories: 
The category of persistence modules $\Fun(\posetC, \Mod)$ 
and the category of sheaves on the Alexandrov topology $\Sh(P^a)$ are equivalent. 
For a proof of this equivalence, see \cite[Theorem 4.2.10]{Curry1}. 
We omit the details, but present here the functors that define the equivalence. 
Firstly, a sheaf $\mathcal{F}$ is mapped to the persistence module, which at $p$ is the stalk $\mathcal{F}_p$. 
The internal morphisms are just restrictions. 
Conversely, a persistence module $M$ corresponds to the sheaf $M^{\dagger}$, 
whose sections over an up-set $U \subseteq P$ are limits 
\[
  M^{\dagger}(U) 
  = \varprojlim_{x \in U} M_x 
  = \Biggl\{ (m_x)_{x \in U} \in \prod_{x \in U} M_x \ \Big\vert \ M(x \le y)(m_x) = m_y \Biggr\}. 
\]
Restrictions are obvious. 
Later, we will use this equivalence without a specific mention. 
This means that we will write $M(U)$ instead of $M^{\dagger}(U)$. 
Most of the time, we prefer to speak about persistence modules, because it prevents confusion with the two sheaf categories. 

Let $P$ be a poset. 
The category of persistence modules over $P$ is also equivalent 
to the category of cosheaves on the Alexandrov topology of $P^{\op}$ (see, \cite[Theorem 4.2.10]{Curry1}). 
Note that the down-sets of $P$ are the open sets of $P^{\op}$. 
In this equivalence, a persistence module $M$ over $P$ is mapped to a cosheaf $\hat{M}$, whose cosection modules are colimits 
\[
  \hat{M}(D) = \varinjlim_{x \in D} M_x
\]
for all down-sets $D$ of $P$.

\begin{lemma}\label{Lemma:SheafCosheaf}
Let $P$ be a poset, and let $M$ be a persistence module over $P$. 
Then 
\begin{enumerate}
  \item[ i)] $M^{\dagger}(U) = \Hom(k[U], M)$ for all up-sets $U \subseteq P$; 
  \item[ ii)] $\hat{M}(D) = k[D] \otimes_{\posetC} M$ for all down-sets $D \subseteq P$. 
\end{enumerate}
Here $k[D]$ denotes the indicator module over $P^{\op}$ with the support $D$. 
\end{lemma}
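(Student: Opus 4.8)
The plan is to establish both identities by exhibiting natural isomorphisms that respect restrictions (resp.\ corestrictions), using the defining universal properties of $\Hom$, $\otimes_{\posetC}$, and the indicator modules. For i), fix an up-set $U \subseteq P$. A morphism $f \colon k[U] \to M$ of persistence modules is a family $(f_p)_{p \in P}$ of $k$-homomorphisms $f_p \colon k[U]_p \to M_p$ commuting with internal morphisms. For $p \notin U$ we have $k[U]_p = 0$, so $f_p = 0$; for $p \in U$ the map $f_p \colon k \to M_p$ is determined by the element $m_p := f_p(1) \in M_p$. The naturality square for a relation $p \le q$ with $p, q \in U$ reads $M(p \le q)(f_p(1)) = f_q(k[U](p \le q)(1)) = f_q(1)$, i.e.\ $M(p \le q)(m_p) = m_q$; for $p \le q$ with $p \notin U$ the square is automatic since $k[U]_p = 0$ (and $q \in U$ is forced when $p \in U$ since $U$ is an up-set, so no further constraint arises). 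Hence $f \mapsto (m_p)_{p \in U}$ is a bijection between $\Hom(k[U], M)$ and the set of compatible tuples $(m_x)_{x \in U} \in \prod_{x \in U} M_x$ with $M(x \le y)(m_x) = m_y$, which is exactly $M^{\dagger}(U)$ by definition. One then checks this bijection is $k$-linear and natural in $U$: for an inclusion $U' \subseteq U$ of up-sets, precomposition with the canonical $k[U'] \to k[U]$ corresponds to restricting the tuple, which matches the restriction map of $M^{\dagger}$.

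For ii), fix a down-set $D \subseteq P$. Here $k[D]$ is the indicator module over $P^{\op}$, so it is a functor $\posetC^{\op} \to \Mod$ with $k[D]_p = k$ for $p \in D$ and $0$ otherwise, and the structure maps of $P^{\op}$ (i.e.\ for $p \le q$ in $P$, the map $k[D]_q \to k[D]_p$) are identities when $p, q \in D$ and zero otherwise. By the defining presentation of the tensor product,
\[
  k[D] \otimes_{\posetC} M = \bigoplus_{p \in P} k[D]_p \otimes M_p \ \Big/ \ \bigl\langle\, k[D](x \le y)(n) \otimes m - n \otimes M(x \le y)(m) \,\bigr\rangle,
\]
and since $k[D]_p \otimes M_p \cong M_p$ canonically (zero when $p \notin D$), this is $\bigl(\bigoplus_{p \in D} M_p\bigr) / R$ where $R$ is generated by the images of $M(x \le y)(m) - $ (transported) $m$ for $x \le y$ with $x, y \in D$, together with the trivial relations coming from indices outside $D$. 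I would then compare this directly with the colimit $\varinjlim_{x \in D} M_x$, which by its standard construction is precisely $\bigl(\bigoplus_{x \in D} M_x\bigr)$ modulo the subgroup generated by $m - M(x \le y)(m)$ for $x \le y$ in $D$; the two quotients agree, and the identification is $k$-linear and compatible with the corestriction maps for an inclusion $D' \subseteq D$ (which corresponds to the canonical map of colimits induced by the subdiagram). Alternatively — and perhaps cleaner to write — one can invoke the tensor-hom adjunction together with part i): for any $k$-module $A$,
\[
  \Hom_k\bigl(k[D] \otimes_{\posetC} M,\, A\bigr) \cong \Hom_{\Fun(\posetC,\Mod)}\bigl(M,\, \shHom_k(k[D], A)\bigr),
\]
and $\shHom_k(k[D], A)$ is the persistence module $p \mapsto A$ for $p \in D$ extended by $0$, i.e.\ essentially $A[D]$ viewed appropriately; unwinding via i) gives $\Hom_k(\varinjlim_{x \in D} M_x, A)$, and Yoneda yields the claim. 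I would pick whichever of these two routes is most consistent with the conventions already fixed in the paper.

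The steps, in order: (1) spell out what a morphism $k[U] \to M$ is and read off the bijection with compatible tuples, establishing i); (2) verify $k$-linearity and naturality in $U$ for i); (3) for ii), unwind the tensor-product presentation, cancel the trivial tensor factors, and match with the explicit colimit presentation — or instead run the adjunction argument — establishing ii); (4) check compatibility with corestrictions. The main obstacle is essentially bookkeeping: making sure in ii) that the relations imposed by the tensor product over $\posetC$ (which a priori range over \emph{all} $x \le y$ in $P$, including those with $x$ or $y$ outside $D$) genuinely reduce to exactly the relations defining the colimit over the sub-poset $D$, with no spurious extra identifications and nothing missing. Once the indices outside $D$ are seen to contribute only zero summands and vacuous relations, the two descriptions coincide on the nose.
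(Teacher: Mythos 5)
Your argument is correct and follows essentially the same route as the paper: part i) is read off directly from the definition of a morphism $k[U]\to M$ and the inverse-limit description of $M^\dagger(U)$, and part ii) is proved by unwinding the tensor-product presentation and matching it with the concrete colimit presentation, after observing that indices outside $D$ contribute only zero summands and vacuous relations. Your alternative route for ii) via the tensor--Hom adjunction and Yoneda is a fine variant but is not needed; the direct comparison of presentations, which both you and the paper carry out, is the cleaner option and is fully rigorous once the bookkeeping you describe is done.
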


\begin{proof}
Item i) follows directly from the definitions.
It has also been stated in \cite[Remark 7.2]{CohBBR}.
For ii), the tensor product $k[D] \otimes_{\posetC} M$ is the quotient 
\[
  \bigoplus_{p \in D} M_p \ \Big/ \ \Bigr\langle m_y - M(x \le y)(m_x) 
    \ \Bigr| \ m_x \in M_x, m_y \in M_y \Bigr\rangle. 
\]
But this is exactly the concrete description of the colimit $\varinjlim_{x \in D} M_x$. 
\end{proof}

\subsection{The Adjoint Triple $(j^*, j_*, j^!)$}\label{sec:functors}

Let $P$ be a poset. 
We next study, how we can travel between the two sheaf categories. 
Since the identity $j \colon P^a \to P^{\sigma}$ is continuous, 
there are the classical sheaf theoretic direct and inverse image functors
\[
  j_* \colon \Sh(P^{a}) \to \Sh(P^{\sigma}) \quad \text{and} \quad j^* \colon \Sh(P^{\sigma}) \to \Sh(P^{a}). 
\]
Note that these functors satisfy the formulas 
\[
  (j_*M)(\Int U_p) = M(\Int U_p) \quad \text{and} \quad 
  (j^*\mathcal{F})_p = \mathcal{F}_p
\]
for all $p \in P$. 
Due to the equivalence of persistence modules and and Alexandrov sheaves, 
we can also consider $j_*$ and $j^*$ as functors $\Fun(\posetC, \Mod) \to \Sh(P^{\sigma})$ and $\Sh(P^{\sigma}) \to \Fun(\posetC, \Mod)$, respectively. 
We now define a third functor 
\[
  j^! \colon \Sh(P^{\sigma}) \to \Fun(\posetC, \Mod) 
\]
by setting 
\[
  (j^! \mathcal{F})_p = \mathcal{F}(\Int U_p) 
\]
for all Scott sheaves $\mathcal{F}$ on $P^{\sigma}$ and $p \in P$. 
The internal morphisms are just restrictions. 
This functor is left exact, since it is defined by sections.

We can describe the sections of $j^! \mathcal{F}$ as follows:

\begin{proposition}\label{Prop:j!Sections}
Let $P$ be a continuous poset. 
For every Scott sheaf $\mathcal{F}$ on $P^{\sigma}$ and every up-set $U \subseteq P$, 
we have $(j^!\mathcal{F})(U) = \mathcal{F}(\Int U)$. 
\end{proposition}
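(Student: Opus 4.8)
The plan is to reduce the statement to an inverse-limit computation and then lean on the sheaf axioms for $\mathcal F$. First I would use the equivalence $\Fun(\posetC,\Mod)\simeq\Sh(P^a)$ to rewrite the left-hand side as the limit of stalks,
\[
  (j^!\mathcal F)(U)=\varprojlim_{x\in U}(j^!\mathcal F)_x=\varprojlim_{x\in U}\mathcal F(\Int U_x),
\]
the transition maps being the restrictions $\mathcal F(\Int U_x)\to\mathcal F(\Int U_y)$ for $x\le y$. By Lemma \ref{Fact:IntU} iii) and iv) we have $\Int U_x={\twoheaduparrow} x$ for every $x$ and $\Int U=\bigcup_{x\in U}{\twoheaduparrow} x$, so $\{\Int U_x\}_{x\in U}$ is an open cover of $\Int U$ in $P^{\sigma}$ (each $\Int U_x$ being Scott-open by Lemma \ref{Fact:IntU} i)), and each $\Int U_x\subseteq\Int U$. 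Restricting sections from $\Int U$ then yields a compatible cone $\mathcal F(\Int U)\to\mathcal F(\Int U_x)$, hence a canonical morphism $\alpha\colon\mathcal F(\Int U)\to\varprojlim_{x\in U}\mathcal F(\Int U_x)$, and the whole task becomes showing $\alpha$ is bijective.

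The key lemma I would establish first is an interpolation claim: for $x,y\in U$ and $z\in\Int U_x\cap\Int U_y$ there is $w\in U$ with $x\le w$, $y\le w$ and $z\in\Int U_w$. To prove it, note $x\ll z$ and $y\ll z$; by the interpolation property choose $a,b$ with $x\ll a\ll z$ and $y\ll b\ll z$, and since $P$ is continuous the set ${\twoheaddownarrow} z$ is upward-directed, so there is $w\in{\twoheaddownarrow} z$ with $a\le w$ and $b\le w$. Then $x\ll a\le w$ and $y\ll b\le w$ give $x\ll w$ and $y\ll w$, hence $x\le w$ and $y\le w$, so $w\in U$ as $U$ is an up-set containing $x$, while $w\ll z$ gives $z\in{\twoheaduparrow} w=\Int U_w$. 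Moreover $x\le w$ together with the basic properties of $\ll$ yields $\Int U_w={\twoheaduparrow} w\subseteq{\twoheaduparrow} x=\Int U_x$, and likewise $\Int U_w\subseteq\Int U_y$.

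Granting this, injectivity of $\alpha$ is immediate from the separation axiom applied to the cover $\{\Int U_x\}_{x\in U}$ of $\Int U$. For surjectivity I would take a compatible family $(s_x)_{x\in U}$ with $s_x\in\mathcal F(\Int U_x)$ and $s_x|_{\Int U_y}=s_y$ for $x\le y$, and show the $s_x$ glue. For arbitrary $x,y\in U$, the interpolation claim exhibits $\Int U_x\cap\Int U_y$ as the union of the sets $\Int U_w$ with $w\in U$, $w\ge x$, $w\ge y$, and on each such piece $s_x|_{\Int U_w}=s_w=s_y|_{\Int U_w}$; so by separation $s_x$ and $s_y$ agree on $\Int U_x\cap\Int U_y$. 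The gluing axiom then produces $s\in\mathcal F(\Int U)$ with $s|_{\Int U_x}=s_x$ for all $x\in U$, i.e.\ $\alpha(s)=(s_x)_{x\in U}$, so $\alpha$ is an isomorphism and $(j^!\mathcal F)(U)=\mathcal F(\Int U)$.

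I expect the interpolation claim to be the only real obstacle: it is exactly the place where continuity of $P$ (directedness of ${\twoheaddownarrow} z$ with supremum $p$) and the interpolation property are used, whereas everything else is routine bookkeeping with the sheaf axioms on the basis $\{{\twoheaduparrow} p\}_{p\in P}$. A minor point to verify along the way is that the transition maps in the displayed limit really are the internal morphisms of the persistence module $j^!\mathcal F$, so that $\alpha$ is well defined as a map into the limit rather than just into the product; this is immediate from the definition $(j^!\mathcal F)_p=\mathcal F(\Int U_p)$ with restrictions as internal morphisms.
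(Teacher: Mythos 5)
Your proof is correct and follows essentially the same route as the paper's: both rewrite $(j^!\mathcal F)(U)$ as $\varprojlim_{x\in U}\mathcal F(\Int U_x)$ and then identify this with $\mathcal F(\Int U)$ using the fact that $\{{\twoheaduparrow} p\}_{p\in P}$ is a basis. The paper treats the final identification as immediate, whereas you supply the missing verification (the directedness/interpolation claim about intersections $\Int U_x\cap\Int U_y$ and the cover $\{\Int U_x\}_{x\in U}$ of $\Int U$, followed by the separation and gluing axioms), which is a useful elaboration but not a different argument.
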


\begin{proof}
By Lemma \ref{Fact:IntU} the collection $\{ \Int U_p \}_{p \in P}$ is a basis of the Scott topology. 
Now we obtain our claim 
\[
  (j^! \mathcal{F})(U) 
  = \varprojlim_{x \in U} (j^! \mathcal{F})_x 
  = \varprojlim_{x \in U} \mathcal{F}(\Int U_x) 
  = \mathcal{F}(\Int U). 
\]
\end{proof}

In Proposition \ref{FullyFaithfulAdjointCombo}, 
we establish some basic properties concerning the functors $j^*, j_*$, and $j^!$.
In consequence, we obtain an \emph{adjoint triple} $(j^*, j_*, j^!)$.

\begin{proposition}\label{FullyFaithfulAdjointCombo}
Let $P$ be a continuous poset. 
Then 
\begin{enumerate}[ i)]
  \item the pairs $(j^*, j_*)$ and $(j_*, j^!)$ are adjoint pairs; 
  \item $j_*j^* = \id$ and $j_*j^! = \id$; 
  \item the functors $j^*$ and $j^!$ are fully faithful. 
\end{enumerate}
In particular, $j_*$ is exact. 
\end{proposition}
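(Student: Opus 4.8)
Our strategy is to reduce the statement to three largely independent facts, exploiting the computations recorded just before the statement: $(j_*M)(\Int U_p)=M(\Int U_p)$ and $(j^*\mathcal F)_p=\mathcal F_p$, the fact that $\{\Int U_p\}_{p\in P}$ is a basis of $P^\sigma$ (Lemma \ref{Fact:IntU}), and the fact that a morphism of Scott sheaves is the same datum as a natural transformation of the associated $\{\Int U_p\}$-presheaves. To begin with, the pair $(j^*,j_*)$ is adjoint for the trivial reason that $j^*$ and $j_*$ are the inverse- and direct-image functors of the continuous map $j$ (see \cite{Hart}); in particular $j_*$ is left exact. Moreover, once we know that $j^!$ is right adjoint to $j_*$, the functor $j_*$ is a left adjoint, hence right exact, hence exact; and if in addition $j_*j^*=\id$ is induced by the unit of $(j^*,j_*)$ and $j_*j^!=\id$ by the counit of $(j_*,j^!)$, then full faithfulness of $j^*$ and $j^!$ is formal. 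So the work is concentrated in: (a) the adjunction $(j_*,j^!)$, (b) $j_*j^!=\id$, and (c) $j_*j^*=\id$.

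Item (b) is immediate: applying the formula $(j_*N)(\Int U_p)=N(\Int U_p)$ to the persistence module $N=j^!\mathcal F$, and then Proposition \ref{Prop:j!Sections} (together with $\Int(\Int U_p)=\Int U_p$, since $\Int U_p$ is already open), gives $(j_*j^!\mathcal F)(\Int U_p)=(j^!\mathcal F)(\Int U_p)=\mathcal F(\Int U_p)$ for every $p$, so the two Scott sheaves agree on a basis, and one reads off that this identification is the counit. For (a) I would produce a natural bijection $\Hom_{\Sh(P^\sigma)}(j_*M,\mathcal F)\cong\Hom_{\Fun(\posetC,\Mod)}(M,j^!\mathcal F)$. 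Working over the basis $\{\Int U_p\}$ and using $M(\Int U_p)=\varprojlim_{x\gg p}M_x$ together with $(j^!\mathcal F)(\Int U_p)=\varprojlim_{x\gg p}(j^!\mathcal F)_x=\varprojlim_{x\gg p}\mathcal F(\Int U_x)$ (from Proposition \ref{Prop:j!Sections}), a morphism $j_*M\to\mathcal F$ amounts to a coherent family of maps $g_x\colon M_x\to\mathcal F(\Int U_x)=(j^!\mathcal F)_x$ — one passes back and forth by composing with the limit projections $M(\Int U_p)\to M_x$, respectively with the canonical morphisms $M_p\to\varprojlim_{x\gg p}M_x$ — and a routine check shows the $g_x$ constitute a morphism $M\to j^!\mathcal F$ of persistence modules and that the two assignments are mutually inverse and natural in $M$ and $\mathcal F$.

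The heart of the matter is (c), and this is where I expect the only real difficulty to lie. Spelling out the unit of $(j^*,j_*)$, the identity $j_*j^*\mathcal F=\mathcal F$ is equivalent to the assertion that for every Scott sheaf $\mathcal F$ and every $p\in P$ the canonical ``germ'' map
\[
  \mathcal F(\Int U_p)\longrightarrow\varprojlim_{x\gg p}\mathcal F_x
\]
is an isomorphism. This statement is false for sheaves on a general space (for instance it fails on any Hausdorff space, where the specialization order is trivial and the right-hand side is a product of stalks), so it must use continuity of $P$ in an essential way, through the interpolation property. I would prove it as a piece of bookkeeping with $\ll$. First, interpolation shows that $\{\Int U_x\mid x\gg p\}$ is cofinal among the basic Scott-open sets contained in $\Int U_p$, so $\mathcal F(\Int U_p)=\varprojlim_{x\gg p}\mathcal F(\Int U_x)$; likewise $\{a\mid p\ll a\ll x\}$ is cofinal in ${\twoheaddownarrow}x$, so each stalk is the filtered colimit $\mathcal F_x=\varinjlim_{p\ll a\ll x}\mathcal F(\Int U_a)$. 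With these reductions in hand, an element of $\varprojlim_{x\gg p}\mathcal F_x$ is a compatible family of germs, each represented — again by interpolation — on some $\Int U_a$ with $p\ll a\ll x$; the compatibility of the family along $\le$, together with $\Int U_p=\bigcup_{p\ll a}\Int U_a$, forces these local representatives to agree on overlaps, so they glue to a section over $\Int U_p$, and a diagram chase tracking germs through the two cofinality reductions shows this construction is a two-sided inverse to the germ map. Granting (c), the unit of $(j^*,j_*)$ is an isomorphism, so $j^*$ is fully faithful and $j_*j^*=\id$; combined with (a) and (b) this yields the proposition, together with the exactness of $j_*$ as explained above.
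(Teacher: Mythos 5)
Your decomposition into (a) the adjunction $(j_*,j^!)$, (b) $j_*j^!=\id$, and (c) $j_*j^*=\id$ is the right skeleton, and your treatments of (a) and (b) match the paper in spirit (the paper is a bit more explicit that the inverse map in (a) requires gluing a compatible family of sections over the cover $\{\Int U_x : x\gg p\}$ of $\Int U_p$). The substantial divergence is in (c), and here you have missed the easy road: once (a), (b) and the standard adjunction $(j^*,j_*)$ are in place, $j_*j^*=\id$ is \emph{formal}. The paper computes, for all $\mathcal G$,
\[
  \Hom(j_*j^*\mathcal F,\mathcal G)=\Hom(j^*\mathcal F,j^!\mathcal G)=\Hom(\mathcal F,j_*j^!\mathcal G)=\Hom(\mathcal F,\mathcal G),
\]
and concludes by Yoneda. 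No germ-level computation is needed, and continuity of $P$ enters only through (a) and (b). You flag (c) as ``the heart of the matter'' and the ``only real difficulty'' --- but it is in fact the part you can get for free.

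Your direct argument for (c) is also not as routine as your sketch suggests. The claim that ``compatibility of the family along $\le$~\dots~forces these local representatives to agree on overlaps'' does not follow immediately: if $s_x$ on $\Int U_{a_x}$ represents $g_x$ and $s_y$ on $\Int U_{a_y}$ represents $g_y$, and $z$ lies in the overlap but is not $\ge x$ or $\ge y$, nothing in the compatibility of $(g_w)_{w\gg p}$ (which only flows along $\le$) directly forces $\operatorname{germ}_z(s_x)=\operatorname{germ}_z(s_y)$. The gap can be repaired, but the missing observation is specific to the Scott topology: every Scott-open neighbourhood $W$ of $a$ is an up-set, hence $U_a\subseteq W$ and therefore $\Int U_a\subseteq W$. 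Consequently the germ $g_a$ at $a$ determines a \emph{canonical} section $s_a\in\mathcal F(\Int U_a)$ (restrict any representative of $g_a$ to $\Int U_a$; well-definedness uses precisely the inclusion above), and the family $(s_a)_{a\gg p}$ so produced is coherent because the $g_a$ are. Spelling this out would close the gap --- but the Yoneda argument is shorter and is the route the paper takes. One last minor remark: you deduce exactness of $j_*$ only from it being a left adjoint; that gives right exactness, and you should note that left exactness is either standard sheaf theory for the direct image of a continuous map, or comes from $j_*$ also being a right adjoint of $j^*$.
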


\begin{proof}
i) The adjointness of $j^*$ and $j_*$ is standard sheaf theory. 
To prove the adjointness of $j_*$ and $j^!$, let $M \in \Fun(\posetC, \Mod)$ and $\mathcal{F} \in \Sh(P^{\sigma})$. 
It is easy to define a map $\Hom(j_*M, \mathcal{F}) \to \Hom(M, j^! \mathcal{F})$ by mapping a morphism $\varphi \colon j_*M \to \mathcal{F}$ to the morphism, which at $p$ is the composition 
\[
  M_p = M(U_p) \overset{\res}{\to} M(\Int U_p) \overset{\varphi}{\to} \mathcal{F}(\Int U_p).
\] 
In order to define the inverse, we need gluing. 
A morphism $f \colon M \to j^! \mathcal{F}$ gives $k$-homomorphisms $f_p \colon M_p \to \mathcal{F}(\Int U_p)$ for all $p \in P$. 
If $s \in M(\Int U_p)$ then the restrictions $s|_{U_x}$ are mapped to $\mathcal{F}(\Int U_x)$ for all $x \in \Int(U_p)$. 
These images are compatible, so we can glue them together, and get a unique section over $\bigcup_{x \gg p} \Int U_x$. 
But this union is $\Int U_p$ by Lemma \ref{Fact:IntU} ii), because $P$ is continuous. 
This defines the inverse, proving i). 

ii) Now 
\[
  (j_* j^! \mathcal{F})(\Int U_p)
  = (j^! \mathcal{F})(\Int U_p) 
  = \mathcal{F}(\Int \Int U_p) 
  = \mathcal{F}(\Int U_p) 
\]
for all $\mathcal{F} \in \Sh(P^{\sigma})$ and $p \in P$. 
Since the collection $\{ \Int(U_p) \}_{p \in P}$ is a basis of the Scott topology, 
we have $j_* j^! \mathcal{F} = \mathcal{F}$, 
and hence, $j_* j^! = \id$. 

On the other hand, 
\[
  \Hom(j_* j^* \mathcal{F}, \mathcal{G}) 
  = \Hom(j^* \mathcal{F}, j^! \mathcal{G}) 
  = \Hom(\mathcal{F}, j_* j^! \mathcal{G}) 
  = \Hom(\mathcal{F}, \mathcal{G}). 
\]
for all $\mathcal{F}, \mathcal{G} \in \Sh(P^{\sigma})$.  
By Yoneda's lemma $j_* j^* \mathcal{F} = \mathcal{F}$, and hence, $j_*j^* = \id$. 

iii) This follows from the two previous items by \cite[IV.3, Theorem 1]{MacLane1971}. 

The functor $j_*$ is right exact, as it is a left adjoint. 
\end{proof}

\begin{definition}\label{overUnderLine}
Let $P$ be a poset, and let $M$ be a persistence module over $P$. 
Set
\[
  \underline{M}_p = \varprojlim_{x \gg p} M_x \quad \text{and} \quad \overline{M}_p = \varinjlim_{x \ll p} M_x 
\]
for all $p \in P$. 
These operations actually define persistence modules:  
the internal morphisms are given by the universal properties of the limit and the colimit. 
Thus, we obtain endofunctors 
\[
  \underline{(-)}, \, \overline{(-)} \colon \Fun(\posetC, \Mod) \to \Fun(\posetC, \Mod). 
\]
Observe that there are canonical morphisms $M \to \underline{M}$ and $\overline{M} \to M$. 
\end{definition}

\begin{remark}\label{Rmk:LineSectionCosection}
Let $M$ be a persistence module over a poset $P$, and let $p \in P$. 
The module $\underline{M}_p$ is now the module of sections $M^{\dagger}(\Int U_p)$. 
In contrast, the module $\overline{M}_p$ coincides with the module of cosections $\hat{M}({\twoheaddownarrow} p)$. 
\end{remark}

\begin{remark}\label{Rmk:UpperLowerBoundary}
Definition \ref{overUnderLine} is related to the upper and lower boundary functors introduced 
by Miller in \cite[Definitions 3.11 and 11.1]{Miller1}. 
Given a polyhedral closed proper convex cone $\gamma \subseteq \Rset^n$ with a non-empty interior and a face $\tau$ of $\gamma$, 
Miller defined the \emph{upper boundary} atop and the \emph{lower boundary} beneath the interior of $\tau$ by the formulas 
\[
  (\delta^{\tau}M)_{\bm{p}} = \varinjlim_{\bm{x} \in \bm{p} - \Int \tau} M_{\bm{x}}
  \quad \text{and} \quad 
  (\partial^{\tau}M)_{\bm{p}} = \varprojlim_{\bm{x} \in \bm{p} + \Int \tau} M_{\bm{x}}
\]
for all $\bm{p} \in \Rset^n$. 
In the case $\tau = \gamma$, Miller's $\delta^{\gamma} M$ and $\partial^{\gamma} M$ are now exactly $\overline{M}$ 
and $\underline{M}$, respectively, 
since the indexing sets are the same.  
\end{remark}

The following Lemma \ref{jConnectionCont}, which connects the adjoint triple $(j^*, j_*, j^!)$ to the functors introduced in Definition \ref{overUnderLine}, will play a crucial role in the sequel.

\begin{lemma}\label{jConnectionCont}
Let $P$ be a continuous poset. 
Then for all persistence modules $M$ over $P$ 
\begin{enumerate}[ i)]
  \item $\underline{M} = j^!j_*M$; 
  \item $\overline{M} = j^*j_* M$. 
\end{enumerate}
In particular, the unit and the counit of the adjunctions $(j_*, j^!)$ and $(j^*, j_*)$ 
correspond to the canonical morphisms $M \to \underline{M}$ and $\overline{M} \to M$, respectively. 
\end{lemma}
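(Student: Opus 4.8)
The key tool is the description of the functor $j^!$ given in Proposition \ref{Prop:j!Sections}, together with the fact that $j_*$, viewed as a functor $\Fun(\posetC, \Mod) \to \Sh(P^{\sigma})$, acts by $(j_*M)(\Int U_p) = M(\Int U_p) = M^{\dagger}(\Int U_p)$.

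\begin{proof}
i) By Proposition \ref{Prop:j!Sections}, for every Scott sheaf $\mathcal{F}$ and every up-set $U \subseteq P$ we have $(j^!\mathcal{F})(U) = \mathcal{F}(\Int U)$; in particular $(j^!\mathcal{F})_p = (j^!\mathcal{F})(U_p) = \mathcal{F}(\Int U_p)$. Applying this to $\mathcal{F} = j_*M$ and using that $(j_*M)(\Int U_p) = M^{\dagger}(\Int U_p)$, we get
\[
  (j^!j_*M)_p = (j_*M)(\Int U_p) = M^{\dagger}(\Int U_p).
\]
By Lemma \ref{Fact:IntU} iii), $\Int U_p = {\twoheaduparrow} p$, so by the definition of the Alexandrov sheaf $M^{\dagger}$ associated to $M$,
\[
  M^{\dagger}(\Int U_p) = M^{\dagger}({\twoheaduparrow} p) = \varprojlim_{x \in {\twoheaduparrow} p} M_x = \varprojlim_{x \gg p} M_x = \underline{M}_p.
\]
This shows $(j^!j_*M)_p = \underline{M}_p$ for all $p \in P$, and one checks that the internal morphisms agree, since on both sides they are the restriction maps induced by the inclusions ${\twoheaduparrow} q \subseteq {\twoheaduparrow} p$ for $p \le q$. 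Hence $j^!j_*M = \underline{M}$ as persistence modules, naturally in $M$.

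ii) The functor $j^*$ sends a Scott sheaf to its associated persistence module via stalks: $(j^*\mathcal{F})_p = \mathcal{F}_p$. For $\mathcal{F} = j_*M$, the stalk at $p$ is computed as the colimit of $\mathcal{F}(V)$ over Scott-open neighborhoods $V$ of $p$; by Lemma \ref{Fact:IntU} ii) the sets ${\twoheaduparrow} x$ with $p \in {\twoheaduparrow} x$, i.e.\ with $x \ll p$, form a neighborhood basis of $p$, and these are cofinally ordered by reverse inclusion as $x$ ranges over the directed set ${\twoheaddownarrow} p$. Therefore
\[
  (j^*j_*M)_p = (j_*M)_p = \varinjlim_{x \ll p} (j_*M)({\twoheaduparrow} x) = \varinjlim_{x \ll p} M^{\dagger}({\twoheaduparrow} x).
\]
Now $M^{\dagger}({\twoheaduparrow} x) = \varprojlim_{y \gg x} M_y$, and since $x \ll x$ is generally false one cannot simply read off $M_x$; instead one uses that the canonical map $M_x \to M^{\dagger}({\twoheaduparrow} x)$ becomes an isomorphism after passing to the colimit over $x \ll p$. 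Concretely, the interpolation property lets us interleave: for $x \ll p$ choose $x \ll x' \ll p$, and the composite $M_x \to M^{\dagger}({\twoheaduparrow} x) \to M_{x'}$ (the second map being evaluation of a section at $x' \in {\twoheaduparrow} x$) together with $M_{x'} \to M^{\dagger}({\twoheaduparrow} x')$ exhibits the colimit of the $M^{\dagger}({\twoheaduparrow} x)$ as isomorphic to the colimit of the $M_x$, which is $\varinjlim_{x \ll p} M_x = \overline{M}_p$. Again the internal morphisms match, giving $j^*j_*M = \overline{M}$ naturally.

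Finally, by Lemma \ref{jConnectionCont} i) and ii) just proved, the unit $M \to j^!j_*M$ of the adjunction $(j_*, j^!)$ is a natural morphism $M \to \underline{M}$; since on sections over $\Int U_p$ it is the identity of $M^{\dagger}(\Int U_p)$ (the unit of $(j_*,j^!)$ is an isomorphism after applying $j_*$ by Proposition \ref{FullyFaithfulAdjointCombo} ii)), it must be the canonical morphism $M \to \underline{M}$ of Definition \ref{overUnderLine}. Dually, the counit $j^*j_*M \to M$ is a natural morphism $\overline{M} \to M$, and on stalks at $p$ it is the canonical map $\varinjlim_{x \ll p} M_x \to M_p$, so it is the canonical morphism of Definition \ref{overUnderLine}.
\end{proof}

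The main obstacle is the colimit computation in part ii): one must verify carefully that the filtered colimit $\varinjlim_{x \ll p} M^{\dagger}({\twoheaduparrow} x)$ of the limit-modules $M^{\dagger}({\twoheaduparrow} x)$ collapses to $\varinjlim_{x \ll p} M_x$, and this is exactly where the interpolation property of a continuous poset is used (it supplies the interleaving of the two diagrams); the matching of internal morphisms and the naturality in $M$ are then routine diagram chases.
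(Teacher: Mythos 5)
Your proof is correct and follows essentially the same approach as the paper: part i) is the direct computation $(j^!j_*M)_p = (j_*M)(\Int U_p) = M(\Int U_p) = \varprojlim_{x \gg p} M_x$, and part ii) identifies $(j_*M)_p$ with $\varinjlim_{x \ll p} M(\Int U_x)$ via the basis $\{\Int U_p\}$ and then uses the interpolation property to build the mutually inverse maps between this colimit and $\overline{M}_p = \varinjlim_{x \ll p} M(U_x)$. Your cofinality/interleaving phrasing in ii) and the explicit identification of the unit and counit are slightly more detailed than the paper's terse treatment, but the underlying argument is the same.
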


\begin{proof}
i) Simply 
\[
  (j^! j_* M)_p = (j_*M)(\Int U_p) = M(\Int U_p) = \varprojlim_{x \gg p} M_x = \underline{M}_p
\]
for all $p \in P$. 

ii) Since the collection $\{ \Int U_p \}_{p \in P}$ forms a basis of the Scott topology, we have 
\[
  (j^* j_*M)_p = (j_* M)_p = \varinjlim_{p \in \Int(U_x)} j_*M(\Int U_x) = \varinjlim_{x \ll p} M(\Int U_x). 
\]
for any $p \in P$. 
On the other hand, 
\[
  \overline{M}_p = \varinjlim_{x \ll p} M_x = \varinjlim_{x \ll p} M(U_x). 
\]
Let us show that these direct limits are isomorphic. 
Firstly, the restrictions $M(U_x) \to M(\Int U_x)$ induce a morphism $\overline{M}_p \to (j^* j_* M)_p$. 
The inverse is constructed similarly. 
For every $x \ll p$, by interpolation property, we have $x \ll y \ll p$ for some $y$. 
Hence, we have the restriction morphism $M(\Int U_x) \to M(U_y)$. 
These induce the desired inverse $(j^* j_* M)_p \to \overline{M}_p$. 
\end{proof}

\subsection{Semi-continuous Modules}

\begin{definition}
Let $P$ be a poset. 
A persistence module $M$ is called \emph{upper semi-continuous}, 
if the canonical morphism $M \to \underline{M}$ is an isomorphism. 
Dually, $M$ is \emph{lower semi-continuous}, if the canonical morphism $\overline{M} \to M$ is an isomorphism. 
We denote the full subcategories of upper semi-continuous modules and lower semi-continuous modules by 
$\Fun^c(\posetC, \Mod)$ and $\Fun_c(\posetC, \Mod)$, respectively. 
\end{definition}

\begin{remark}
Let $P$ be a poset, and let $M$ be a persistence module over $P$. 
If $p \in P$ is an compact element, then 
\[
  \underline{M}_p = \varprojlim_{x \gg p} M_x = M_p 
  \quad \text{and} \quad 
  \overline{M_p} = \varinjlim_{x \ll p} M_x = M_p. 
\]
Now, consider a poset $P$, where every element is compact (for example, a discrete poset), so that $\underline{M} = M = \overline{M}$. 
Therefore, in this case, every persistence module over $P$ is both upper and lower semi-continuous. 
\end{remark}

\begin{proposition}\label{Prop:LowerSemiContCofinal}
Let $P$ be a continuous poset.  
Then a persistence module $M$ over $P$ is lower semi-continuous, 
if for all $p \in P$ there exists $q \ll p$ such that $M$ restricted to the interval $[q, p]$ is a constant module 
(i.e.\ every internal morphism is an isomorphism). 

In particular, the interval modules $k[D]$ and $k[U]$ are lower semi-continuous 
for all Scott-closed $D \subseteq P$ and Scott-open $U \subseteq P$. 
\end{proposition}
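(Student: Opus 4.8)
The plan is to prove the first claim — that the stated cofinality/local-constancy condition forces lower semi-continuity — and then read off the two indicator-module statements as special cases. First I would fix $p \in P$ and unwind the definition: lower semi-continuity at $p$ means the canonical map $\overline{M}_p = \varinjlim_{x \ll p} M_x \to M_p$ is an isomorphism. The colimit is taken over the directed set ${\twoheaddownarrow} p$ (directed because $P$ is continuous), so I can compute it along any cofinal subdiagram. By hypothesis there is $q \ll p$ with $M$ constant on $[q,p]$; the interpolation property (Lemma-level fact from Section~\ref{sec:Preliminary}) gives a chain $q \ll y \ll p$, and more to the point, the set $\{x \in P \mid q \ll x \text{ or } q \le x \le p\}$... — actually the cleanest route: the subset $\{x : q \le x \ll p\}$ need not be cofinal, so instead I would argue directly that for every $x \ll p$ the internal morphism $M_x \to M_p$ factors through $M_q \to M_p$ after possibly enlarging, using that $q \ll p$ and $x \ll p$ sit in the directed set ${\twoheaddownarrow} p$, hence have an upper bound $z \ll p$ with $q, x \le z \le p$; since $z \in [q,p]$, the map $M_q \to M_z$ is an isomorphism, so the image of $M_x$ in the colimit is controlled. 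Consequently the canonical map $M_q \to M_p$ is already an isomorphism (it is an internal morphism inside $[q,p]$), and the colimit structure map from the $M_q$-vertex to $M_p$ is this isomorphism; since $M_q$ maps isomorphically to $M_p$ and every $M_x$ with $x \ll p$ eventually maps compatibly into the $[q,p]$-part of the diagram, the universal map $\overline{M}_p \to M_p$ is both surjective (it hits the image of $M_q$, which is all of $M_p$) and injective (any element of the colimit is represented by some $M_x$, $x \ll p$, and pushing forward to $M_z \cong M_p$ in $[q,p]$ detects whether it was zero).

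Making the injectivity argument precise is the step I expect to be the main obstacle: one must show that if the image of $m \in M_x$ in $M_p$ is zero, then $m$ already dies somewhere in the diagram ${\twoheaddownarrow} p$. For this I would use that $x$ and $q$ have a common upper bound $z$ in the directed set ${\twoheaddownarrow} p$ with $q \le z \le p$; then $M_z \to M_p$ is the isomorphism $M(z \le p)$ (as $z \in [q,p]$), so $m \mapsto 0$ in $M_p$ forces $m \mapsto 0$ in $M_z$, and since $z \ll p$ this witnesses $m = 0$ in $\overline{M}_p$. Surjectivity is easier: any $m' \in M_p$ is the image of some element of $M_q$ (because $M(q \le p)$ is an isomorphism), and $q \ll p$, so $m'$ lies in the image of the colimit.

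For the final assertion, I would specialize. If $D \subseteq P$ is Scott-closed and $p \in P$, consider two cases. If $p \notin D$: by Remark~\ref{Rmk:DpScottClosed} the complement $P \setminus D$ is Scott-open, hence (Lemma~\ref{Fact:IntU} i)) there is $x \ll p$ with $x \notin D$, and since $D$ is a down-set, no element of $[x,p]$ except possibly... in fact $[x,p] \cap D$ could be nonempty; instead pick $q \ll p$ with $q \notin P\setminus D$ failing — hmm. The clean statement: if $p \notin D$, then since $P \setminus D$ is Scott-open and contains $p$, there is $q \ll p$ with $q \in P \setminus D$; as $D$ is a down-set and $q \le p$ with $q \notin D$, every $y \in [q,p]$ satisfies $y \notin D$ (if $y \in D$ then $q \le y$ gives $q \in D$), so $k[D]$ is the zero module on $[q,p]$, which is constant. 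If $p \in D$: since $D$ is a down-set, the whole interval ${\downarrow} p \supseteq [q,p]$ lies in $D$ for any $q \ll p$, so $k[D]$ is constantly $k$ on $[q,p]$. Either way the hypothesis of the proposition holds, so $k[D]$ is lower semi-continuous. The argument for $k[U]$ with $U$ Scott-open is dual but more direct: if $p \in U$, Lemma~\ref{Fact:IntU} i) gives $q \ll p$ with $q \in U$, and since $U$ is an up-set, $[q,p] \subseteq U$, so $k[U]$ is constantly $k$ on $[q,p]$; if $p \notin U$, then since $U$ is an up-set, ${\downarrow} p \cap U = \emptyset$ is impossible to guarantee in general — but here we only need local constancy below $p$, and for $q \ll p$ the interval $[q,p] \subseteq {\downarrow} p$; if some $y \in [q,p]$ were in $U$ then $y \le p$ would not force $p \in U$. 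So I instead choose: $p\notin U$ implies (as $U$ is an up-set) that no $y \le p$ lies in $U$ unless... no, down-closure of the complement: $P \setminus U$ is a down-set, $p \in P\setminus U$, so $[q,p] \subseteq {\downarrow} p \subseteq P \setminus U$ for any $q \le p$, hence $k[U]$ is zero on $[q,p]$, constant. Thus in all cases the hypothesis applies and both families of indicator modules are lower semi-continuous.
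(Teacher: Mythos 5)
Your argument is correct and follows essentially the same route as the paper, which simply observes that $\{x \in P : q \ll x \ll p\}$ is cofinal in ${\twoheaddownarrow} p$ (by directedness and interpolation) and lies inside $[q,p]$, where $M$ is constant, so the colimit over it is $M_p$. Your worry that the set $\{x : q \le x \ll p\}$ ``need not be cofinal'' is unfounded---directedness of ${\twoheaddownarrow} p$ already produces, for any $x \ll p$, an upper bound $z \ll p$ of $x$ and $q$, which lies in that set---but your hands-on surjectivity/injectivity check of $\overline{M}_p \to M_p$ and your case analysis for $k[D]$ and $k[U]$ are all sound.
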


\begin{proof}
The direct limit can be computed from a cofinal subset. 
Thus, for all $p \in P$, 
\[
  \overline{M}_p = \varinjlim_{x \ll p} M_x = \varinjlim_{q \ll x \ll p} M_x = M_p, 
\]
which shows the lower semi-continuity of $M$. 
\end{proof}

\begin{proposition}\label{Prop:UpperSemiContCoInitial}
Let $P$ be a continuous poset such that the sets $\Int U_p$ are downward-directed for all $p \in P$.  
Then a persistence module $M$ over $P$ is upper semi-continuous, if for all $p \in P$, there exists $q \gg p$ such that $M$ restricted to the interval $[p, q]$ is constant. 

In particular, the interval modules $k[D]$ and $k[U]$ are upper semi-continuous 
for all open down-sets $D \subseteq \Rset^n$ and closed up-sets $U \subseteq \Rset^n$. 
\end{proposition}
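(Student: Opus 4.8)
The plan is to dualise the proof of Proposition \ref{Prop:LowerSemiContCofinal}, trading the phrase \emph{cofinal subset} for \emph{coinitial subset}; the extra hypothesis that every $\Int U_p = {\twoheaduparrow} p$ is downward-directed is exactly what licenses this, since an inverse limit may be computed over a coinitial subdiagram only when the index poset is downward-directed.

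Fix $p \in P$ and pick $q \gg p$ with $M$ constant on $[p,q]$. Because $q \in {\twoheaduparrow} p$ and ${\twoheaduparrow} p$ is downward-directed, for every $x \gg p$ there is $y \gg p$ with $y \le x$ and $y \le q$; hence $C := {\twoheaduparrow} p \cap D_q$ is coinitial in ${\twoheaduparrow} p$, and one checks that $C$ is itself downward-directed, so connected. Consequently $\underline{M}_p = \varprojlim_{x \gg p} M_x = \varprojlim_{x \in C} M_x$. Every $x \in C$ satisfies $p \le x \le q$, so $M(p \le x)$ is an isomorphism; as $C$ is connected, the compatible family $\bigl(M(p \le x)\bigr)_{x \in C}$ exhibits $M_p$ as this limit, and the resulting isomorphism $M_p \xrightarrow{\sim} \underline{M}_p$ is the canonical one. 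Thus $M \to \underline{M}$ is an isomorphism, i.e.\ $M$ is upper semi-continuous.

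For the final assertion I would first record that $\Rset^n$ meets the hypothesis: $\Int U_{\bm p} = \{\bm x \mid x_i > p_i \text{ for all } i\}$ is nonempty and closed under coordinatewise minima, hence downward-directed. Let $D \subseteq \Rset^n$ be an open down-set and $\bm p \in \Rset^n$. If $\bm p \notin D$, then $[\bm p, \bm q] \cap D = \emptyset$ for every $\bm q \ge \bm p$ (as $D$ is a down-set), so $k[D]$ is the zero module --- in particular constant --- on $[\bm p, \bm q]$ for any $\bm q \gg \bm p$. If $\bm p \in D$, openness yields $\varepsilon > 0$ with $[\bm p,\, \bm p + (\varepsilon, \dots, \varepsilon)] \subseteq D$, and for $\bm q := \bm p + (\varepsilon, \dots, \varepsilon) \gg \bm p$ the module $k[D]$ is constantly $k$ on $[\bm p, \bm q]$. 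Either way the first part applies, so $k[D]$ is upper semi-continuous; the case of a closed up-set $U$ is symmetric, using that $U$ is an up-set when $\bm p \in U$ and that the complement of $U$ is open when $\bm p \notin U$.

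The one place calling for a little care --- the main, if minor, obstacle --- is the coinitial-subdiagram reduction: one should justify not merely that $C$ is coinitial but that the inclusion $C \hookrightarrow {\twoheaduparrow} p$ induces an isomorphism on inverse limits, which follows once one notes that for each $x \gg p$ the set $\{c \in C \mid c \le x\}$ is nonempty and downward-directed, hence connected. Everything else is a formal dualisation of Proposition \ref{Prop:LowerSemiContCofinal}.
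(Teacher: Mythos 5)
Your proof is correct and follows essentially the same route as the paper: use the downward-directedness of ${\twoheaduparrow} p$ to reduce the inverse limit to a coinitial connected subset of $[p,q]$ on which $M$ is constant (the paper uses $\{x \mid p \ll x \ll q\}$ where you use ${\twoheaduparrow} p \cap D_q$, but this is a cosmetic difference). You additionally spell out the verification for $\Rset^n$, which the paper leaves to the reader, and that verification is correct.
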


\begin{proof}
This is proven as Proposition \ref{Prop:LowerSemiContCofinal}. 
If $\Int U_p = {\twoheaduparrow} p$ is downward-directed, 
then the coinitial subset $\{ x \in P \mid p \ll x \ll q \}$ is connected 
so that the limit $\varprojlim_{p \ll x \ll q} M_x$ coincides with $M_p$. 
\end{proof}

\begin{remark}
The extra assumption about the sets $\Int U_p$ is necessary in Proposition \ref{Prop:UpperSemiContCoInitial}. 
To see this, let us consider the poset $\Rset^* = \Rset \cup \{ a,b,c \}$, 
where $a < b$, $a < c$ and $x < a$ for all $x \in \Rset$. 
As a constant persistence module $k[\Rset^*]$ trivially satisfies the criterion of Proposition \ref{Prop:UpperSemiContCoInitial}. 
However, it is not upper semi-continuous: Since $\Int U_a = \{ b,c \}$, we have 
\[
    \underline{k[\Rset^*]}_{a} 
    = \varprojlim_{x \in \Int U_{a}} k[\Rset^*]_x 
    = k^2
    \not= k 
    = k[\Rset^*]_{a}. 
\]
This additional assumption is, of course, satisfied in partially ordered abelian groups like $\Rset^n$.
\end{remark}

Conversely, there are easy criteria for non-semi-continuity.

\begin{proposition}\label{Prop:SemiContCounter}
Let $P$ be a continuous poset with $p \not\ll p$ for all $p \in P$, and let $M$ be a persistence module over $P$. 
Then 
\begin{enumerate}
  \item[ i)] $M$ is not lower semi-continuous if $\supp M$ contains a minimal element with respect to the way-below relation; 
  \item[ ii)] $M$ is not upper semi-continuous if $\supp M$ contains a maximal element with respect to the way-below relation. 
\end{enumerate}
In particular, 
\begin{enumerate}
  \item[ iii)] a non-zero finitely generated persistence module is not lower semi-continuous; 
  \item[ iv)] a non-zero finitely co-generated persistence module is not upper semi-continuous. 
\end{enumerate}
\end{proposition}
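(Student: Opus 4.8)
The plan is to prove i) and ii), and then derive iii) and iv) as consequences. By duality (passing to $P^{\op}$, which is again continuous, and noting that $\overline{(-)}$ and $\underline{(-)}$ swap), it suffices to treat i) and iii). So suppose $\supp M$ contains an element $p$ that is minimal with respect to $\ll$, meaning that there is no $q \in \supp M$ with $q \ll p$ other than possibly $p$ itself; but since $p \not\ll p$ by hypothesis, this says that $x \ll p$ implies $x \notin \supp M$, i.e. $M_x = 0$ for all $x \ll p$.

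The key step is then to compute $\overline{M}_p = \varinjlim_{x \ll p} M_x$ directly. Since the index category $\{x \in P \mid x \ll p\}$ is upward-directed (as $P$ is continuous, ${\twoheaddownarrow}\, p$ is directed) and every $M_x$ in this diagram is zero, the colimit is $0$. Hence the canonical morphism $\overline{M}_p \to M_p$ is the zero map $0 \to M_p$, and this cannot be an isomorphism because $M_p \neq 0$ by the choice of $p \in \supp M$. Therefore $M$ is not lower semi-continuous, proving i); ii) follows by the dual argument, where one uses that $\underline{M}_p = \varprojlim_{x \gg p} M_x$ with all terms zero gives $\underline{M}_p = 0 \neq M_p$.

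For iii), let $M$ be a non-zero finitely generated persistence module, so there is an epimorphism $\bigoplus_{i=1}^{n} k[U_{x_i}] \twoheadrightarrow M \to 0$. Since $M \neq 0$, some $M_q \neq 0$, so $q \geq x_i$ for at least one $i$; thus $\supp M$ is non-empty and contained in $\bigcup_i U_{x_i}$, and in particular every element of $\supp M$ lies above some generator $x_i$. The main point to check is that $\supp M$ then has an element minimal for $\ll$: indeed, among the finitely many generators, pick $x_i \in \supp M$ (at least one of them lies in $\supp M$ since $k[U_{x_i}]_{x_i} = k$ surjects onto... actually one must argue a generator itself lies in the support, which follows because the images of the $x_i$-components generate $M_{x_i}$ over the trivial relations, so $M_{x_i} \neq 0$ for at least one $i$). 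Now if $y \ll x_i$ and $y \in \supp M$, then $y \geq x_j$ for some $j$, and since $y \ll x_i$ with $y \geq x_j$ we get $x_j \ll x_i$ by property iii) of the way-below relation; among the finite set $\{x_1,\dots,x_n\} \cap \supp M$ one can therefore choose a $\ll$-minimal element, contradicting the existence of such $y$ unless that minimal generator is itself $\ll$-minimal in $\supp M$. This provides the required minimal element and i) applies. Item iv) is dual, using a monomorphism $0 \to M \to \bigoplus_i k[D_{x_i}]$ and part ii).

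The step I expect to require the most care is iii)--iv): one has to argue carefully that some \emph{generator} $x_i$ actually lies in $\supp M$ (so that the finite set of generators-in-the-support is non-empty and admits a $\ll$-minimal element), and that $\ll$-minimality of that generator within the finite generating set forces $\ll$-minimality within all of $\supp M$. The inputs are exactly the three basic properties of $\ll$ recalled in Section~\ref{sec:Preliminary} together with the fact that $\supp M \subseteq \bigcup_i U_{x_i}$; there is no topology or limit computation beyond what i) and ii) already supply.
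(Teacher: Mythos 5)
Your arguments for i) and ii) are correct and essentially identical to the paper's: using $p \not\ll p$, every $M_x$ with $x \ll p$ vanishes, so $\overline{M}_p = 0 \neq M_p$, and dually for ii).

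In iii) the argument has a gap. You pick $x_i$ to be $\ll$-minimal in $\{x_1,\dots,x_n\} \cap \supp M$, take $y \ll x_i$ with $y \in \supp M$, and use surjectivity at $y$ to find some $j$ with $y \geq x_j$, so that $x_j \ll x_i$ (this uses property ii), not iii), of the way-below relation). To contradict the $\ll$-minimality of $x_i$ within $\{x_1,\dots,x_n\} \cap \supp M$, you must also know $M_{x_j} \neq 0$, and the bare fact that $y \geq x_j$ for \emph{some} $j$ does not deliver this. The paper sidesteps the issue by first minimizing the generating set; after that, every surviving $x_j$ lies in $\supp M$, because $\Hom(k[U_{x_j}], M) = M_{x_j}$ by Lemma \ref{Lemma:SheafCosheaf} i), so $M_{x_j} = 0$ would force the morphism $k[U_{x_j}] \to M$ to be zero and the summand to be removable. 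Alternatively, and closer to your set-up, choose $j$ so that the \emph{component} $(k[U_{x_j}])_y \to M_y$ is nonzero (such a $j$ exists since the direct sum of these components surjects onto $M_y \neq 0$); then $k[U_{x_j}] \to M$ is a nonzero morphism, hence $M_{x_j} \neq 0$ by the same lemma, and your contradiction goes through. The same fix is needed in iv). A minor side remark: it is not true in general that $P^{\op}$ is continuous when $P$ is, so the appeal to duality via $P^{\op}$ should be read as shorthand for the parallel direct arguments for ii) and iv), which you do in fact sketch.
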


In this proposition, an element $p \in P$ is called a \emph{minimal} (resp.\ \emph{maximal}) element of a subset $S \subseteq P$ \emph{with respect to the way-below relation}, 
if $p \in S$ and $S \cap {\twoheaddownarrow} p \subseteq \{ p \}$ (resp.\ $S \cap {\twoheaduparrow}p \subseteq \{ p \}$). 
Note that any minimal element is also minimal with respect to the way-below relation, 
since ${\twoheaddownarrow} p \subseteq D_p$ for all $p \in P$.

\begin{proof}
i) Let $p \in \supp M$ be a minimal element with respect to the way-below relation. 
In other words, $\supp M  \cap {\twoheaddownarrow} p \subseteq \{ p \}$. 
Since $p \not\ll p$, we have $\supp M  \cap {\twoheaddownarrow} p = \emptyset$. 
Thus, 
\[
  \overline{M}_p 
  = \varinjlim_{x \ll p} M_x 
  = 0 \not= M_p. 
\]

ii) This is proven as i). 

iii) If $M$ is finitely generated, 
then there exist an exact sequence $\bigoplus_{i = 1}^n k[U_{x_i}] \to M \to 0$ with $x_i \in P$ $(i = 1, \dots, n)$. 
By removing the unnecessary summands, we can assume that the set $\{ x_1, \dots, x_n \}$ is minimal. 
Moreover, it cannot be empty, since $M$ is non-zero. 
Therefore, $\{ x_1, \dots, x_n\}$ is a non-empty finite subset of $\supp M$.
Thus, it contains a minimal element, which is also a minimal element of $\supp M$. 
By i), we conclude that $M$ is not lower semi-continuous. 

iv) This is proven as iii). 
\end{proof}

We state first three simple consequences of Lemma \ref{jConnectionCont}.

\begin{proposition}\label{Prop:SumProdSemiCont}
Let $P$ be a continuous poset. 
Then 
\begin{enumerate}
  \item[ i)] lower semi-continuous modules are closed under all direct sums; 
  \item[ ii)] upper semi-continuous modules are closed under all products. 
\end{enumerate}
\end{proposition}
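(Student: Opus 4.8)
The statement to prove is Proposition~\ref{Prop:SumProdSemiCont}: over a continuous poset $P$, lower semi-continuous modules are closed under arbitrary direct sums, and upper semi-continuous modules are closed under arbitrary products. The plan is to reduce both claims to the identities $\overline{M} = j^*j_*M$ and $\underline{M} = j^!j_*M$ of Lemma~\ref{jConnectionCont}, and then observe that $j_*$, $j^*$, $j^!$ interact well with colimits and limits respectively. The key point is that $M$ is lower semi-continuous iff the canonical $\overline{M}\to M$ is an isomorphism, i.e.\ iff the counit of $(j^*,j_*)$ at $M$ is an isomorphism; dually for upper semi-continuity and the unit of $(j_*,j^!)$.

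For part i), let $\{M_\alpha\}_{\alpha\in A}$ be lower semi-continuous modules and set $M=\bigoplus_\alpha M_\alpha$. First I would note that direct sums in $\Fun(\posetC,\Mod)$ are computed pointwise, and that the functor $\overline{(-)}$ commutes with arbitrary direct sums: indeed $\overline{M}_p=\varinjlim_{x\ll p}M_x$ is a filtered colimit (here continuity of $P$ is used, since ${\twoheaddownarrow}p$ is directed), and filtered colimits of $k$-modules commute with direct sums. Hence $\overline{\bigoplus_\alpha M_\alpha}\cong\bigoplus_\alpha\overline{M_\alpha}$, and under this identification the canonical morphism $\overline{M}\to M$ is the direct sum of the canonical morphisms $\overline{M_\alpha}\to M_\alpha$. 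Since each of the latter is an isomorphism by hypothesis, so is their direct sum, giving lower semi-continuity of $M$.

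For part ii), the argument is formally dual but the commutation one needs is that $\underline{(-)}$ preserves arbitrary products. Products in $\Fun(\posetC,\Mod)$ are pointwise, and $\underline{M}_p=\varprojlim_{x\gg p}M_x$ is a limit, which always commutes with products; so $\underline{\prod_\alpha M_\alpha}\cong\prod_\alpha\underline{M_\alpha}$, compatibly with the canonical morphisms $M\to\underline M$. If each $M_\alpha\to\underline{M_\alpha}$ is an isomorphism, then so is $\prod_\alpha M_\alpha\to\prod_\alpha\underline{M_\alpha}=\underline{\prod_\alpha M_\alpha}$, whence $\prod_\alpha M_\alpha$ is upper semi-continuous. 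Alternatively, and perhaps more cleanly, one invokes Lemma~\ref{jConnectionCont}: $j_*$ is a right adjoint (to $j^*$) hence preserves products, and $j^!$ is a right adjoint (to $j_*$) hence also preserves products, so $\underline{(-)}=j^!j_*$ preserves products; dually $j_*$ is a left adjoint (to $j^!$) and $j^*$ is a left adjoint (to $j_*$), so $\overline{(-)}=j^*j_*$ preserves coproducts. Either way the canonical morphism is the (co)product of the canonical morphisms because it is the (co)unit of an adjunction, which is natural.

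**Main obstacle.** The only genuinely delicate point is the claim that $\overline{(-)}$ commutes with arbitrary direct sums; the naive ``colimits commute with colimits'' slogan applies only to colimits of the \emph{same} shape, so one really does need that ${\twoheaddownarrow}p$ is filtered (which is exactly the continuity hypothesis) to know that $\overline{M}_p$ is a \emph{filtered} colimit, and then that filtered colimits in $\Mod$ commute with arbitrary coproducts. If one instead routes everything through the adjoint triple of Lemma~\ref{jConnectionCont}, this subtlety is packaged into the statement that left adjoints preserve colimits, and the proof becomes essentially formal — so I would present the adjunction-based argument as the main line and mention the pointwise computation only as a remark.
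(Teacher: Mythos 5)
Your argument is correct and essentially identical to the paper's: both reduce to Lemma~\ref{jConnectionCont} together with the observation that $\overline{(-)}=j^*j_*$ (resp.\ $\underline{(-)}=j^!j_*$) is a composition of left (resp.\ right) adjoints from the adjoint triple of Proposition~\ref{FullyFaithfulAdjointCombo}, hence preserves coproducts (resp.\ products). Your two extra remarks --- that the identification is compatible with the canonical map because it is the (co)unit of an adjunction, and the alternative pointwise computation via filtered colimits commuting with direct sums --- supply the naturality detail that the paper's one-line chain of isomorphisms leaves implicit, but do not change the approach.
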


\begin{proof}
i) Let $M_i$ $(i \in I)$ be lower semi-continuous modules over $P$. 
By Lemma \ref{jConnectionCont}, 
\[
  \overline{\bigoplus_{i \in I} M_i} 
  = j^*j_*\bigoplus_{i \in I} M_i 
  = \bigoplus_{i \in I} j^*j_* M_i 
  = \bigoplus_{i \in I} \overline{M_i} 
  = \bigoplus_{i \in I} M_i, 
\]
since left adjoints commute with colimits, in particular, direct sums. 

ii) This is proven as i). 
\end{proof}

\begin{proposition}\label{Prop:LineComposition}
Let $P$ be a continuous poset. 
Then  
\begin{enumerate}[i) ]
  \item $\underline{\underline{M}} = \underline{M}$; 
  \item $\overline{\overline{M}} = \overline{M}$; 
  \item $\underline{\left(\overline{M}\right)} = \underline{M}$
  \item $\overline{\left( \underline{M} \right)} = \overline{M}$
\end{enumerate}
for all persistence modules $M$ over $P$. 
In particular, the persistence module $\underline{M}$ (resp.\ $\overline{M}$) is upper semi-continuous 
(resp.\ lower semi-continuous) 
for all persistence modules $M$ over $P$. 
\end{proposition}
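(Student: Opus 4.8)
The plan is to reduce everything to the functorial identities $\underline{(-)} = j^! j_*$ and $\overline{(-)} = j^* j_*$ supplied by Lemma \ref{jConnectionCont}, together with $j_* j^* = \id = j_* j^!$ from Proposition \ref{FullyFaithfulAdjointCombo} ii). Each of the four composites then collapses after inserting one of these identities in the middle. For i) one computes
\[
  \underline{\underline{M}} = j^! j_* \bigl( j^! j_* M \bigr) = j^! \bigl( j_* j^! \bigr) j_* M = j^! j_* M = \underline{M},
\]
and ii)--iv) are obtained in exactly the same way from
\[
  \overline{\overline{M}} = j^* \bigl( j_* j^* \bigr) j_* M, \qquad
  \underline{\bigl( \overline{M} \bigr)} = j^! \bigl( j_* j^* \bigr) j_* M, \qquad
  \overline{\bigl( \underline{M} \bigr)} = j^* \bigl( j_* j^! \bigr) j_* M,
\]
which simplify to $\overline{M}$, $\underline{M}$ and $\overline{M}$, respectively.

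For the final assertion I would invoke the last sentence of Lemma \ref{jConnectionCont}: the canonical morphism $\underline{M} \to \underline{\underline{M}}$ is the unit of the adjunction $(j_*, j^!)$ evaluated at the object $\underline{M} = j^!(j_* M)$, and the canonical morphism $\overline{\overline{M}} \to \overline{M}$ is the counit of $(j^*, j_*)$ evaluated at the object $\overline{M} = j^*(j_* M)$. Since $j^!$ and $j^*$ are fully faithful by Proposition \ref{FullyFaithfulAdjointCombo} iii), the counit $j_* j^! \to \id$ and the unit $\id \to j_* j^*$ are isomorphisms; the triangle identities then force the unit of $(j_*, j^!)$ to be an isomorphism on every object of the form $j^! \mathcal{F}$ and the counit of $(j^*, j_*)$ to be an isomorphism on every object of the form $j^* \mathcal{F}$. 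Applying this with $\mathcal{F} = j_* M$ shows that $\underline{M} \to \underline{\underline{M}}$ and $\overline{\overline{M}} \to \overline{M}$ are isomorphisms, i.e.\ $\underline{M}$ is upper semi-continuous and $\overline{M}$ is lower semi-continuous.

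There is essentially no obstacle in this argument: the entire content is already packaged in Lemma \ref{jConnectionCont} and Proposition \ref{FullyFaithfulAdjointCombo}. The only point that deserves a moment's attention is that the equalities $\underline{\underline{M}} = \underline{M}$ and $\overline{\overline{M}} = \overline{M}$ are actually realised by the canonical morphisms and not merely by some abstract isomorphism, which is precisely why the identification of units and counits with the canonical morphisms from Lemma \ref{jConnectionCont} is invoked in the ``in particular'' part.
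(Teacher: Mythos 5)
Your proposal is correct and follows essentially the same route as the paper, which simply cites Lemma \ref{jConnectionCont} and Proposition \ref{FullyFaithfulAdjointCombo} and leaves the cancellation of $j_* j^!$ and $j_* j^*$ implicit. Your extra care in the final paragraph, identifying the canonical morphisms with the unit and counit and verifying via the triangle identities that they are genuine isomorphisms rather than abstract ones, is a worthwhile elaboration of a point the paper glosses over.
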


\begin{proof}
The claim follows from Lemma \ref{jConnectionCont} 
using Proposition \ref{FullyFaithfulAdjointCombo}. 
\end{proof}

If $F \colon \mathcal{A} \to \mathcal{B}$ is a functor, then its \emph{essential image} $\ImF(F)$ is the full subcategory of $\mathcal{B}$ consisting of objects $B \in \mathcal{B}$ such that there is an isomorphism $B \cong F(A)$ for some $A \in \mathcal{A}$.

\begin{proposition}\label{Prop:EssImageJ}
Let $P$ be a continuous poset. 
Then 
\begin{enumerate}
  \item[ i)] $\ImF(j^!) = \Fun^c(\posetC, \Mod)$; 
  \item[ ii)] $\ImF(j^*) = \Fun_c(\posetC, \Mod)$. 
\end{enumerate}
\end{proposition}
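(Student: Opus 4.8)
The plan is to show the two essential images coincide with the stated subcategories by a double inclusion, exploiting the adjoint triple and Lemma \ref{jConnectionCont}. For item i), first I would check $\ImF(j^!) \subseteq \Fun^c(\posetC, \Mod)$: given $\mathcal{F} \in \Sh(P^\sigma)$, set $M = j^!\mathcal{F}$; by Proposition \ref{FullyFaithfulAdjointCombo} ii) we have $j_*M = j_*j^!\mathcal{F} = \mathcal{F}$, so by Lemma \ref{jConnectionCont} i) we get $\underline{M} = j^!j_*M = j^!\mathcal{F} = M$, and one checks that under this identification the canonical morphism $M \to \underline{M}$ is the identity, hence $M$ is upper semi-continuous. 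For the reverse inclusion $\Fun^c(\posetC, \Mod) \subseteq \ImF(j^!)$: if $M$ is upper semi-continuous, then $M \cong \underline{M} = j^!j_*M = j^!(j_*M)$ by Lemma \ref{jConnectionCont} i), exhibiting $M$ as lying in the essential image of $j^!$.

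Item ii) is entirely dual. For $\ImF(j^*) \subseteq \Fun_c(\posetC, \Mod)$: given $\mathcal{F}$, set $M = j^*\mathcal{F}$; then $j_*M = j_*j^*\mathcal{F} = \mathcal{F}$ by Proposition \ref{FullyFaithfulAdjointCombo} ii), so $\overline{M} = j^*j_*M = j^*\mathcal{F} = M$ by Lemma \ref{jConnectionCont} ii), and the canonical morphism $\overline{M} \to M$ is the identity under this identification, so $M$ is lower semi-continuous. For the reverse inclusion: a lower semi-continuous $M$ satisfies $M \cong \overline{M} = j^*j_*M = j^*(j_*M)$ by Lemma \ref{jConnectionCont} ii), so $M \in \ImF(j^*)$.

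The only genuinely non-routine point — and the one I would be most careful about — is verifying that the isomorphism $M \xrightarrow{\sim} j^!j_*M$ produced abstractly from $j_*j^! = \id$ actually coincides with the canonical morphism $M \to \underline{M}$ of Definition \ref{overUnderLine}; but this is precisely the content of the last sentence of Lemma \ref{jConnectionCont}, which states that the unit of the adjunction $(j_*, j^!)$ is that canonical morphism (and dually for $(j^*, j_*)$ and $\overline{M} \to M$). Once that identification is invoked, both inclusions in each item are immediate, so the proof is short.

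\begin{proof}
i) If $\mathcal{F} \in \Sh(P^\sigma)$, then $j_*j^!\mathcal{F} = \mathcal{F}$ by Proposition \ref{FullyFaithfulAdjointCombo} ii), so $\underline{j^!\mathcal{F}} = j^!j_*j^!\mathcal{F} = j^!\mathcal{F}$ by Lemma \ref{jConnectionCont} i). By the last part of Lemma \ref{jConnectionCont}, the canonical morphism $j^!\mathcal{F} \to \underline{j^!\mathcal{F}}$ is the unit of $(j_*, j^!)$ evaluated at $j^!\mathcal{F}$, which is an isomorphism since $j^!$ is fully faithful (Proposition \ref{FullyFaithfulAdjointCombo} iii)). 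Hence $j^!\mathcal{F}$ is upper semi-continuous, giving $\ImF(j^!) \subseteq \Fun^c(\posetC, \Mod)$. Conversely, if $M$ is upper semi-continuous, then $M \cong \underline{M} = j^!j_*M$ by Lemma \ref{jConnectionCont} i), so $M \in \ImF(j^!)$.

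ii) If $\mathcal{F} \in \Sh(P^\sigma)$, then $j_*j^*\mathcal{F} = \mathcal{F}$ by Proposition \ref{FullyFaithfulAdjointCombo} ii), so $\overline{j^*\mathcal{F}} = j^*j_*j^*\mathcal{F} = j^*\mathcal{F}$ by Lemma \ref{jConnectionCont} ii). By the last part of Lemma \ref{jConnectionCont}, the canonical morphism $\overline{j^*\mathcal{F}} \to j^*\mathcal{F}$ is the counit of $(j^*, j_*)$ evaluated at $j^*\mathcal{F}$, which is an isomorphism since $j^*$ is fully faithful (Proposition \ref{FullyFaithfulAdjointCombo} iii)). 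Hence $j^*\mathcal{F}$ is lower semi-continuous, giving $\ImF(j^*) \subseteq \Fun_c(\posetC, \Mod)$. Conversely, if $M$ is lower semi-continuous, then $M \cong \overline{M} = j^*j_*M$ by Lemma \ref{jConnectionCont} ii), so $M \in \ImF(j^*)$.
\end{proof}
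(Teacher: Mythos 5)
Your proof is correct and follows essentially the same route as the paper's: apply $j_*j^! = \id$ and Lemma \ref{jConnectionCont} to get $\underline{j^!\mathcal{F}} = j^!\mathcal{F}$ for one inclusion, and use $M \cong \underline{M} = j^!j_*M$ for the other. You are in fact slightly more careful than the paper in one respect: you explicitly note that the identification $\underline{j^!\mathcal{F}} = j^!\mathcal{F}$ must be realized by the canonical morphism (the unit, via the triangular identity), whereas the paper states the equality of objects and leaves that identification implicit.
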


\begin{proof}
The proofs are similar, so we only prove i). 
Note first that $\Fun^c(\posetC, \Mod)$ is closed with respect to isomorphic objects. 
If $\mathcal{F} \in \Sh(P^{\sigma})$, then $j^! \mathcal{F}$ is upper semi-continuous, 
since by Lemma \ref{jConnectionCont} i) and Proposition \ref{FullyFaithfulAdjointCombo} ii), $\underline{j^! \mathcal{F}} = j^! j_* j^! \mathcal{F} = j^! \mathcal{F}$. 
On the other hand, if $M$ is an upper semi-continuous module, 
then $M \to \underline{M}$ is an isomorphism. 
Thus, by Lemma \ref{jConnectionCont} i), $M$ is isomorphic to $j^!j_* M$. 
\end{proof}

Our next goal is to show that there exist equivalences between the category of Scott sheaves 
and the categories of semi-continuous modules of both types.
The groundwork has already been laid in Proposition \ref{FullyFaithfulAdjointCombo} and Lemma \ref{jConnectionCont}.

\begin{theorem}\label{EqvUpperSemiCont}
Let $P$ be a continuous poset. 
Then the following categories are equivalent: 
\begin{itemize}
  \item the category of Scott sheaves; 
  \item the full subcategory of upper semi-continuous modules; 
  \item the full subcategory of lower semi-continuous modules. 
\end{itemize}
These equivalences are given by the functors 
\[
\begin{tikzcd}
  \Fun^c(\posetC, \Mod) \arrow["{j_*}", shift left]{r} & 
  \Sh(P^{\sigma}) \arrow["{j^!}", shift left]{l} \arrow["{j^*}", shift right, swap]{r}& 
  \Fun_c(\posetC, \Mod) \arrow["{j_*}", shift right, swap]{l}
\end{tikzcd}
\]
It follows, in particular, that $\Fun^c(\posetC, \Mod)$ and $\Fun_c(\posetC, \Mod)$ are abelian categories. 
\end{theorem}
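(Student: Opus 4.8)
The plan is to exhibit the equivalences via the adjoint triple and the essential-image description already established. First I would recall from Proposition~\ref{FullyFaithfulAdjointCombo} that $j^!$ and $j^*$ are fully faithful, and that $j_* j^! = \id$ and $j_* j^* = \id$. By Proposition~\ref{Prop:EssImageJ}, the essential image of $j^!$ is exactly $\Fun^c(\posetC, \Mod)$ and the essential image of $j^*$ is exactly $\Fun_c(\posetC, \Mod)$. A fully faithful functor restricts to an equivalence onto its essential image; hence $j^! \colon \Sh(P^{\sigma}) \to \Fun^c(\posetC, \Mod)$ and $j^* \colon \Sh(P^{\sigma}) \to \Fun_c(\posetC, \Mod)$ are equivalences, with quasi-inverses given by $j_*$ (restricted appropriately). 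This already gives all three categories as equivalent, and composing the two equivalences through $\Sh(P^{\sigma})$ gives the equivalence $\Fun^c(\posetC, \Mod) \simeq \Fun_c(\posetC, \Mod)$ directly.

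Next I would verify that $j_*$ does land in the right place and serves as the quasi-inverse. For the upper semi-continuous side: if $M \in \Fun^c(\posetC, \Mod)$, then $M \cong \underline{M} = j^! j_* M$ by Lemma~\ref{jConnectionCont}~i) together with the definition of upper semi-continuity, so $j^! (j_* M) \cong M$; conversely $j_* (j^! \mathcal{F}) = \mathcal{F}$ for every Scott sheaf $\mathcal{F}$ by Proposition~\ref{FullyFaithfulAdjointCombo}~ii). These natural isomorphisms are precisely the unit and counit of the adjunction $(j_*, j^!)$, which by Lemma~\ref{jConnectionCont} are the canonical morphisms; restricted to $\Fun^c$ and $\Sh(P^{\sigma})$ they become isomorphisms, so the adjunction restricts to an adjoint equivalence. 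The lower semi-continuous side is dual, using Lemma~\ref{jConnectionCont}~ii), $\overline{M} = j^* j_* M$, and $j_* j^* = \id$, with the unit--counit pair of $(j^*, j_*)$ playing the analogous role.

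Finally, for the last sentence: since $\Sh(P^{\sigma})$ is an abelian category and equivalences of categories transport abelian structure, both $\Fun^c(\posetC, \Mod)$ and $\Fun_c(\posetC, \Mod)$ inherit the structure of abelian categories. I would note here that a priori these are merely full subcategories of the abelian category $\Fun(\posetC, \Mod)$, and it is not obvious that they are closed under kernels and cokernels formed in $\Fun(\posetC, \Mod)$; the point of the equivalence is that kernels and cokernels computed inside $\Sh(P^{\sigma})$ (equivalently, inside the subcategories, after sheafification) make them abelian, even though the inclusion into $\Fun(\posetC, \Mod)$ need not be exact.

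The main obstacle, such as it is, is bookkeeping: making sure the two halves of the diagram commute up to the stated natural isomorphisms and that the quasi-inverses are genuinely $j_*$ on both sides rather than merely some quasi-inverse. All the essential content is already contained in Proposition~\ref{FullyFaithfulAdjointCombo}, Lemma~\ref{jConnectionCont}, and Proposition~\ref{Prop:EssImageJ}; the proof is really an assembly of these pieces plus the standard fact that a fully faithful functor is an equivalence onto its essential image, so the write-up should be short.
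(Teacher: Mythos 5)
Your proof is correct and takes essentially the same route as the paper: apply Proposition~\ref{FullyFaithfulAdjointCombo}~iii) together with Proposition~\ref{Prop:EssImageJ} and the standard fact that a fully faithful functor is an equivalence onto its essential image. The paper's version is terser (it leaves the quasi-inverse bookkeeping and the transport of abelian structure implicit), but the content is the same.
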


\begin{proof}
We only show the equivalence between the Scott sheaves and the upper semi-continuous modules. 
The case of lower semi-continuous modules is proved similarly. 
By Proposition \ref{FullyFaithfulAdjointCombo} iii), $j^!$ is fully faithful, 
and every fully faithful functor induces an equivalence from its domain to the essential image 
(see, for example, \cite[IV.4, Theorem 1]{MacLane1971}). 
Thus, the category of Scott sheaves is equivalent to the full subcategory of upper semi-continuous persistence modules by Proposition \ref{Prop:EssImageJ} i). 
\end{proof}

Let $P$ be a poset, and let $k$ be a field. A persistence module $M$ over $P$ is called \emph{finitely encoded} 
if there exists a finite poset $F$, an order-preserving map $f \colon P \to F$, 
and a pointwise finite-dimensional persistence module $N$ over $F$ 
such that $M = f^* N$ (\cite[Definition 2.6]{Miller2}). 
We say that persistence module $M$ is \emph{finitely Scott encoded} if the map $f \colon P \to F$ is also Scott-continuous.

\begin{proposition}\label{FinScottEncodedAreLSemiCont}
Let $P$ be a continuous poset, and let $k$ be a field. 
A finitely Scott encoded persistence module $M$ is lower semi-continuous. 
\end{proposition}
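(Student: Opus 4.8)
The plan is to reduce the claim to the criterion of Proposition~\ref{Prop:LowerSemiContCofinal}: it suffices to show that for every $p \in P$ there exists $q \ll p$ such that $M$ is constant on the interval $[q,p]$. Since $M = f^*N$ for a Scott-continuous map $f \colon P \to F$ to a finite poset $F$, we have $M_x = N_{f(x)}$ and the internal morphisms of $M$ are pulled back from those of $N$. Hence $M$ is constant on $[q,p]$ as soon as $f$ is constant on $[q,p]$, i.e.\ as soon as $f(q) = f(p)$ (an order-preserving map constant on the endpoints of an interval is constant on the whole interval, since $f(q) \le f(y) \le f(p)$ for $q \le y \le p$).

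So the real content is: for every $p \in P$ there exists $q \ll p$ with $f(q) = f(p)$. Here I would use Scott-continuity of $f$ together with continuity of $P$. Because $P$ is continuous, the set ${\twoheaddownarrow} p$ is directed with $\sup({\twoheaddownarrow} p) = p$. Applying $f$ and using $f(\sup D) = \sup f(D)$ for directed $D$, we get $f(p) = \sup f({\twoheaddownarrow} p) = \sup \{ f(x) \mid x \ll p \}$. Now $F$ is finite, so $\{ f(x) \mid x \ll p \}$ is a finite directed subset of $F$; a finite directed poset has a greatest element, which must therefore equal its supremum $f(p)$. Thus $f(p) = f(x_0)$ for some $x_0 \ll p$, and we may take $q = x_0$.

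Putting these together: given $p \in P$, choose $q \ll p$ with $f(q) = f(p)$; then $f$ is constant on $[q,p]$, hence $M = f^*N$ is constant on $[q,p]$, and Proposition~\ref{Prop:LowerSemiContCofinal} gives that $M$ is lower semi-continuous. The only mildly delicate point is the passage $f(p) = \sup\{f(x) \mid x \ll p\}$: one must be sure the directed supremum $\sup({\twoheaddownarrow} p)$ really exists (it does, equal to $p$, by continuity of $P$) so that Scott-continuity of $f$ applies; after that, finiteness of $F$ does the rest. I expect this to be the main (and essentially only) obstacle, and it is a minor one.
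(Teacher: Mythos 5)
Your argument is correct, and it is genuinely different from the paper's proof. The paper proceeds structurally: Scott-continuity of $f$ means $f^a$ factors as $P^a \xrightarrow{j} P^{\sigma} \xrightarrow{f^{\sigma}} F^{\sigma} = F^a$, so $M = (f^a)^*N = j^*\bigl((f^{\sigma})^*N\bigr)$ lies in the essential image of $j^*$, which by Proposition~\ref{Prop:EssImageJ}~ii) is exactly the category of lower semi-continuous modules. You instead verify the concrete sufficient criterion of Proposition~\ref{Prop:LowerSemiContCofinal}, producing for each $p$ a $q \ll p$ with $M$ constant on $[q,p]$; the key step is that Scott-continuity gives $f(p) = \sup f({\twoheaddownarrow}p)$, and since $f({\twoheaddownarrow}p)$ is a finite directed subset of $F$ it has a greatest element, attained at some $q \ll p$. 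Your reasoning is sound throughout (in particular, $f$ being order-preserving and constant on the endpoints of $[q,p]$ forces constancy on the whole interval, and a finite directed poset indeed has a maximum). The paper's route is shorter once the adjoint triple machinery is in place and makes the connection to $\ImF(j^*)$ explicit, which is used elsewhere; your route is more elementary, requires none of Lemma~\ref{jConnectionCont} or Proposition~\ref{Prop:EssImageJ}, and makes the role of finiteness of $F$ concretely visible, which the paper's proof hides.
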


\begin{proof}
There is a Scott-continuous map $f \colon P \to F$ to a finite poset $F$ 
and a pointwise finite-dimensional persistence module $N$ over $F$ such that $M = f^*N$. 
Since the map $f$ is Scott-continuous, the map $f^a$ factors as 
$P^a \overset{j}{\to} P^{\sigma} \overset{f}{\to} F^{\sigma} = F^a$. 
Therefore, also the inverse image factors, and we get $M = (f^a)^*N = j^*f^*N$. 
Now $M$ is lower semi-continuous by Proposition \ref{Prop:EssImageJ} ii). 
\end{proof}

\begin{example}
The so-called finitely determined modules over the poset $\Rset^n$ are good examples of finitely Scott encoded modules. 
Following \cite[Example 2.8]{Miller2}, a persistence module is \emph{finitely determined} 
if it is finitely encoded by some convex projection. 
For $\bm{a}, \bm{b} \in \Zset^n$, we denote by $[\bm{a}, \bm{b}]$ the corresponding closed interval in $\Zset^n$. 
The \emph{convex projection} $\pi \colon \Rset^n \to [\bm{a}, \bm{b}]$ is defined 
by the formula $\pi(\bm{x}) = (\pi_1(x_1), \dots, \pi_n(x_n))$, where 
\[
  \pi_i(x_i) = \max(a_i, \min(\lceil x_i \rceil, b_i))
\]
for all $i = 1, \dots, n$. 
Note that the restriction $\Zset^n \to [\bm{a}, \bm{b}]$ maps a point to the closest point in the box $[\bm{a}, \bm{b}]$. 
It is easy to check that the $\max$- and $\min$-functions are Scott-continuous. 
The ceiling function is Scott-continuous, too. 
Thus, convex projections are Scott continuous as a tuple of Scott-continuous maps. 
It therefore follows from Proposition  \ref{FinScottEncodedAreLSemiCont} that every finitely determined module is lower semi-continuous. 
\end{example}

\subsection{Radical, Socle and Top}

We give our variants of the radical, socle, and top of a module in the following

\begin{definition}\label{Def:RadSocTop}
Let $P$ be a continuous poset. 
The \emph{Scott-radical} of a persistence module $M$ is the submodule $\Rad^{\sigma}(M) = \ImF(\overline{M} \to M)$, 
the \emph{Scott-socle} the submodule $\soc^{\sigma}(M) = \Ker(M \to \underline{M})$, 
and the \emph{Scott-top} the quotient module $\topf^{\sigma}(M) = M / \Rad^{\sigma}(M)$. 
More explicitly, we have 
\[
  \Rad^{\sigma}(M)_p = \bigcup_{x \ll p} \ImF M(x \le p) 
  \quad \text{and} \quad 
  \soc^{\sigma}(M)_p = \bigcap_{x \gg p} \Ker M(p \le x) 
\]
for all $p \in P$. 
Note that the index set in the formula for the Scott-radical is ${\twoheaddownarrow} p$, 
which is directed, since the poset is continuous. 
Therefore, the union of the images is a well-defined submodule of $M_p$. 
\end{definition}

\begin{example}\label{Ex:RadSocTopUpDown}
Let $P$ be a continuous poset. 
It follows immediately from the above formulas and the basic properties of the way-below relation that for all up-sets $U \subseteq P$ and down-sets $D \subseteq P$:
\begin{tasks}[label-width=4ex](2)
  \task[ i)] $\Rad^{\sigma}(k[U]) = k[\Int U]$; 
  \task[ ii)] $\Rad^{\sigma}(k[D]) = k[D]$; 
  \task[ iii)] $\soc^{\sigma}(k[U]) = k[U^{\twoheaduparrow}]$; 
  \task[ iv)] $\soc^{\sigma}(k[D]) = k[D \setminus {\twoheaddownarrow} D]$; 
  \task[ v)] $\topf^{\sigma}(k[U]) = k[U] / k[\Int U] = k[U \setminus \Int U]$; 
  \task[ vi)] $\topf^{\sigma}(k[D]) = 0$; 
\end{tasks}
where in iii), $U^{\twoheaduparrow} = \{ x \in U \mid {\twoheaduparrow}x = \emptyset \}$. 
\end{example}

We now express the Scott-socle and the Scott-top in terms of the Hom and tensor product, respectively:

\begin{proposition}\label{prop:socHom}
Let $P$ be a continuous poset, and let $M$ be a persistence module over $P$. 
Then the following formulas hold for all $p \in P$:
\begin{enumerate}
  \item[ i)] $\soc^{\sigma}(M)_p = \Hom(k[U_p] / k[\Int U_p], M)$; 

  \item[ ii)] $\topf^{\sigma}(M)_p = (k[D_p] /k[{\twoheaddownarrow} p]) \otimes_{\posetC} M$. 
\end{enumerate}
\end{proposition}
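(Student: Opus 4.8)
The plan is to derive both formulas from the general adjunction identities in Lemma~\ref{Lemma:SheafCosheaf} together with the descriptions of $\soc^{\sigma}$ and $\topf^{\sigma}$ coming from Definition~\ref{Def:RadSocTop}. For i), I would start from the left-exact sequence $0 \to \soc^{\sigma}(M) \to M \to \underline{M}$, which is the definition of the Scott-socle as $\Ker(M \to \underline{M})$. Evaluating at $p$ and recalling from Remark~\ref{Rmk:LineSectionCosection} that $\underline{M}_p = M^{\dagger}(\Int U_p)$ while $M_p = M^{\dagger}(U_p)$, the map $M_p \to \underline{M}_p$ is the restriction morphism $M^{\dagger}(U_p) \to M^{\dagger}(\Int U_p)$. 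By Lemma~\ref{Lemma:SheafCosheaf}~i), $M^{\dagger}(U_p) = \Hom(k[U_p], M)$ and $M^{\dagger}(\Int U_p) = \Hom(k[\Int U_p], M)$, and the restriction corresponds to the map induced by the inclusion $k[\Int U_p] \hookrightarrow k[U_p]$. Applying the left-exact functor $\Hom(-, M)$ to the short exact sequence $0 \to k[\Int U_p] \to k[U_p] \to k[U_p]/k[\Int U_p] \to 0$ then identifies the kernel of that restriction with $\Hom(k[U_p]/k[\Int U_p], M)$, which is exactly the claim in i). One should check that $\Int U_p$ is an up-set (it is ${\twoheaduparrow}p$ by Lemma~\ref{Fact:IntU}~iii), hence the indicator module is over $P$ and the quotient $k[U_p]/k[\Int U_p] = k[U_p \setminus {\twoheaduparrow}p]$ makes sense.

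For ii), I would argue dually using the cosheaf picture. By Definition~\ref{Def:RadSocTop}, $\topf^{\sigma}(M) = M/\Rad^{\sigma}(M)$ where $\Rad^{\sigma}(M) = \ImF(\overline{M} \to M)$, so $\topf^{\sigma}(M)_p = \Coker(\overline{M}_p \to M_p)$. By Remark~\ref{Rmk:LineSectionCosection}, $\overline{M}_p = \hat{M}({\twoheaddownarrow}p)$ and of course $M_p = M_p = \hat{M}(D_p)$ (the colimit over a down-set with a maximum), and the canonical map is the one induced by the inclusion ${\twoheaddownarrow}p \hookrightarrow D_p$. Now invoke Lemma~\ref{Lemma:SheafCosheaf}~ii): $\hat{M}(D) = k[D] \otimes_{\posetC} M$ for down-sets $D$ (here indicator modules are over $P^{\op}$), so the map is $k[{\twoheaddownarrow}p] \otimes_{\posetC} M \to k[D_p] \otimes_{\posetC} M$ induced by $k[{\twoheaddownarrow}p] \hookrightarrow k[D_p]$. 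Since $- \otimes_{\posetC} M$ is right exact, applying it to $0 \to k[{\twoheaddownarrow}p] \to k[D_p] \to k[D_p]/k[{\twoheaddownarrow}p] \to 0$ yields $k[{\twoheaddownarrow}p] \otimes_{\posetC} M \to k[D_p] \otimes_{\posetC} M \to (k[D_p]/k[{\twoheaddownarrow}p]) \otimes_{\posetC} M \to 0$, so the cokernel is $(k[D_p]/k[{\twoheaddownarrow}p]) \otimes_{\posetC} M$, as asserted.

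The only real subtlety — and the step I would treat most carefully — is making sure the canonical morphisms $M \to \underline{M}$ and $\overline{M} \to M$ from Definition~\ref{overUnderLine} really are identified, pointwise at $p$, with the sheaf-restriction $M^{\dagger}(U_p) \to M^{\dagger}(\Int U_p)$ and the cosheaf-corestriction $\hat{M}({\twoheaddownarrow}p) \to \hat{M}(D_p)$, respectively, compatibly with the Hom/tensor identifications; this is essentially a matter of unwinding the definitions and the naturality in Lemma~\ref{Lemma:SheafCosheaf}, but it is where a careless argument could go wrong. Everything else is formal: left-exactness of $\Hom(-,M)$, right-exactness of $-\otimes_{\posetC}M$, and the fact that a functor applied to a short exact sequence computes the relevant kernel or cokernel.
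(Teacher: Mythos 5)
Your proposal is correct and follows essentially the same route as the paper's proof: apply $\Hom(-,M)$ (resp.\ $-\otimes_{\posetC} M$) to the short exact sequence $0 \to k[\Int U_p] \to k[U_p] \to k[U_p]/k[\Int U_p] \to 0$ (resp.\ its down-set analogue over $P^{\op}$), invoke Lemma~\ref{Lemma:SheafCosheaf}, and identify the resulting kernel/cokernel with $\soc^{\sigma}(M)_p$/$\topf^{\sigma}(M)_p$. The only difference is one of presentation order — you start from the definition of $\soc^{\sigma}$ and $\topf^{\sigma}$ as kernel and cokernel and then work toward the Hom/tensor formulas, while the paper goes the other way — and you are somewhat more explicit about the ``subtlety'' of matching the canonical maps $M\to\underline{M}$, $\overline{M}\to M$ with the Hom/tensor maps induced by the inclusions, which the paper leaves implicit.
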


\begin{proof}
i) Consider the short exact sequence 
\[
  0 \to k[\Int U_p] \to k[U_p] \to k[U_p] / k[\Int U_p] \to 0. 
\]
An application of the functor $\Hom(-, M)$ yields the exact sequence 
\[
  0 \to \Hom(k[U_p] / k[\Int U_p], M) \to \Hom(k[U_p], M) \to \Hom(k[\Int U_p], M). 
\]
By Lemma \ref{Lemma:SheafCosheaf} i), we have 
\[
  \Hom(k[U_p], M) = M(U_p) = M_p
  \quad \text{and} \quad
  \Hom(k[\Int U_p], M) = M(\Int U_p)
\]
Thus, 
\begin{align*}
  \Hom(k[U_p] / k[\Int U_p], M) 
  &= \Ker(\Hom(k[U_p], M) \to \Hom(k[\Int U_p], M)) \\
  &= \Ker(M(U_p) \to M(\Int U_p)) \\
  &= \soc^{\sigma}(M)_p. 
\end{align*}

ii) This is proven similarly, but by using the formula 
\[
  \hat{M}(D) = \varinjlim_{x \in D} M_x = k[D] \otimes_{\posetC} M, 
\]
of Lemma \ref{Lemma:SheafCosheaf} ii), 
and the exact sequence 
\[
  0 \to k[{\twoheaddownarrow} p]  \to k[D_p] \to k[D_p]/ k[{\twoheaddownarrow} p] \to 0 
\]
over the opposite poset $P^{\op}$. 
\end{proof}

\begin{remark}\label{Ex:GradedRn}
Let $P$ be a continuous poset that is also a partially ordered abelian group. 
Recall that there is an equivalence between persistence modules over $P$ 
and $P$-graded $k[U_0]$-modules. 
Now $k[\Int U_0]$ is a homogeneous ideal of $k[U_0]$. 
Let $M$ be a $P$-graded $k[U_0]$-module. 
It follows directly from the definitions that 
the Scott-radical of $M$ is now the submodule $k[\Int U_0]M$, and the Scott-socle the \emph{annihilator} 
\[
  \soc^{\sigma}(M) = \Ann_M(k[\Int U_0]) = \{ m \in M \mid k[\Int U_0] m = 0 \}. 
\]
On the other hand, the Scott-socle and the Scott-top can be written in terms of the graded Hom and tensor product: 
\begin{enumerate}
  \item[ i)] $\soc^{\sigma}(M) = \grHom_{k[U_0]}(k[U_0] / k[\Int U_0], M)$; 
  \item[ ii)] $\topf^{\sigma}(M) = (k[U_0] /k[ \Int U_0]) \otimes_{k[U_0]} M$. 
\end{enumerate}
Both items follow straight from the previous proposition and the formulas $k[U_p] = k[U_0](-p)$ and $ k[\Int U_p] = k[\Int U_0](-p)$, where $p \in P$. 
For item ii), note also that by reversing the order, we have
\[
   (k[U_0] / k[\Int U_0])^{\op} =  k[D_0]/ k[{\twoheaddownarrow} 0]. 
\]
\end{remark}

We can now prove a variant of Nakayama's lemma and its dual.

\begin{proposition}\label{Prop:Nak}
Let $P$ be a continuous poset with $p \not\ll p$ for all $p \in P$, 
and let $M$ be a persistence module over $P$. 
\begin{enumerate}
  \item[ i)] If $M$ is finitely generated and $\topf^{\sigma}(M) = 0$, then $M = 0$. 

  \item[ ii)] If $M$ is finitely co-generated and $\soc^{\sigma}(M) = 0$, then $M = 0$; 
\end{enumerate}
\end{proposition}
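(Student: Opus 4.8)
The plan is to prove i) and deduce ii) by duality (passing to the opposite poset, which is again continuous with the same hypothesis $p \not\ll p$, and exchanging the roles of up-set/down-set generators, $\soc^\sigma \leftrightarrow \topf^\sigma$, finitely generated $\leftrightarrow$ finitely co-generated). So I focus on i). Suppose $M$ is finitely generated and $\topf^\sigma(M) = 0$ but $M \neq 0$. Finite generation means there is a surjection $\bigoplus_{i=1}^n k[U_{x_i}] \twoheadrightarrow M$ with $x_i \in P$; as in the proof of Proposition \ref{Prop:SemiContCounter}, I discard redundant summands so that the finite set $\{x_1,\dots,x_n\}$ is minimal, and it is non-empty since $M \neq 0$. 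The key point is that minimality forces $\{x_1,\dots,x_n\}$ (or at least one of these generators) to "survive" in the top: I claim $\topf^\sigma(M)_{x_j} \neq 0$ for a suitably chosen index $j$, contradicting $\topf^\sigma(M) = 0$.

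First I would set up the concrete description. By Definition \ref{Def:RadSocTop}, $\topf^\sigma(M) = M / \Rad^\sigma(M)$ with $\Rad^\sigma(M)_p = \sum_{x \ll p} \ImF M(x \le p) = \bigcup_{x \ll p}\ImF M(x \le p)$ (the union over the directed set ${\twoheaddownarrow}p$). Now pick a minimal element $x_j$ of $\{x_1,\dots,x_n\}$ with respect to the way-below relation; such exists because the set is finite and non-empty (argue: if not, one could produce an infinite strictly descending way-below chain inside the finite set, impossible — more carefully, "$x_j$ minimal w.r.t.\ $\ll$" here means $\{x_1,\dots,x_n\} \cap {\twoheaddownarrow}x_j \subseteq \{x_j\}$, and since $x_j \not\ll x_j$ this gives $\{x_1,\dots,x_n\} \cap {\twoheaddownarrow}x_j = \emptyset$). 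Let $e_j \in M_{x_j}$ be the image of the generator of $k[U_{x_j}]_{x_j} = k$. Minimality of the generating set implies $e_j \notin \sum_{i \neq j}\ImF M(x_i \le x_j)$ — otherwise the $j$-th summand would be redundant. I then show $e_j \notin \Rad^\sigma(M)_{x_j}$: if $e_j \in \ImF M(x \le x_j)$ for some $x \ll x_j$, then since every element of $M_x$ is itself a sum of images under $M(x_i \le x)$ for those $i$ with $x_i \le x$ (because the $k[U_{x_i}]$ generate), and $x_i \le x \ll x_j$ forces $x_i \ll x_j$, hence $x_i \in {\twoheaddownarrow}x_j \cap \{x_1,\dots,x_n\} = \emptyset$; so $M_x$ receives nothing, contradiction unless the relevant contributions are empty, forcing $e_j = 0$, contrary to minimality ($M_{x_j}$ would then be generated by the remaining summands). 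Therefore $e_j \neq 0$ in $\topf^\sigma(M)_{x_j}$, so $\topf^\sigma(M) \neq 0$ — contradiction. Hence $M = 0$.

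The main obstacle is the bookkeeping in the middle step: precisely controlling which generators $x_i$ map nontrivially into $M_x$ for $x \ll x_j$, and extracting the clean contradiction from minimality of the generating set. The property $p \not\ll p$ is exactly what makes ${\twoheaddownarrow}x_j$ avoid $x_j$ itself, so a minimal generator truly escapes the radical; without it (e.g.\ a compact element) the statement fails, since then $\Rad^\sigma(k[U_p]) = k[U_p]$ could swallow the generator. I would also double-check that "minimal w.r.t.\ $\ll$ inside a finite subset" exists: take any $x_{i_1}$; if it is not minimal pick $x_{i_2} \ll x_{i_1}$ in the set with $x_{i_2}\neq x_{i_1}$; transitivity of $\ll$ keeps the chain way-below-decreasing and the $x_{i_k}$ are forced distinct (an equality $x_{i_k}=x_{i_\ell}$, $k<\ell$, would give $x_{i_k}\ll x_{i_k}$), so finiteness terminates the process at a minimal element. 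Finally, ii) follows by applying i) to $P^{\op}$, using that $\soc^\sigma$ over $P$ corresponds to $\topf^\sigma$ over $P^{\op}$ and finitely co-generated over $P$ means finitely generated over $P^{\op}$; I would record this dualization explicitly but not repeat the argument.
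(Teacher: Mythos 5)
Your argument for part i) is correct and is essentially the dual of the paper's own proof: the paper proves ii) directly by choosing a $\le$-maximal element $p_0$ of $\supp M$ (extracted from a minimal finite co-generating set) and observing that, since $p_0 \not\ll p_0$ forces ${\twoheaduparrow} p_0 \subseteq U_{p_0} \setminus \{p_0\}$ while maximality of $p_0$ gives $M_x = 0$ for $x > p_0$, one has $\soc^{\sigma}(M)_{p_0} = M_{p_0} \neq 0$. You prove i) directly by choosing a $\ll$-minimal $x_j$ in a minimal finite generating set, showing $M_x = 0$ for all $x \ll x_j$, hence $\Rad^{\sigma}(M)_{x_j} = 0$ and $\topf^{\sigma}(M)_{x_j} = M_{x_j} \neq 0$. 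Same strategy, opposite halves; the paper dismisses the other half with ``the proof for i) being similar,'' which is exactly what your argument supplies. (Your intermediate assertion that $e_j \notin \sum_{i\neq j}\ImF M(x_i \le x_j)$ is correct but superfluous: once $M_x = 0$ for all $x \ll x_j$, the radical at $x_j$ vanishes outright and you only need $e_j \neq 0$.)

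The one step that does not survive scrutiny is the deduction of ii) by ``passing to the opposite poset $P^{\op}$.'' Continuity of posets is not a self-dual property: $P$ continuous does not entail $P^{\op}$ continuous, so part i) is simply not available over $P^{\op}$. Moreover, a persistence module over $P$ is a covariant functor $\posetC \to \Mod$; it does not give a persistence module over $P^{\op}$, so the proposed dictionary ($\soc^{\sigma}$ over $P$ $\leftrightarrow$ $\topf^{\sigma}$ over $P^{\op}$) has no meaning here. What the paper intends by ``similar'' is that the \emph{argument} dualizes while staying on $P$: from a minimal finite co-generating injection $M \hookrightarrow \bigoplus_i k[D_{x_i}]$, pick $x_j$ with $\{x_1,\dots,x_n\} \cap {\twoheaduparrow} x_j = \emptyset$ (such $x_j$ exists for the same finiteness reasons you gave); then for any $x \gg x_j$, a relation $x \le x_i$ would give $x_i \gg x_j$, impossible, so $x \notin \bigcup_i D_{x_i} \supseteq \supp M$, whence $\soc^{\sigma}(M)_{x_j} = \bigcap_{x \gg x_j} \Ker M(x_j \le x) = M_{x_j} \neq 0$. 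You should present ii) as this parallel direct argument on $P$ itself, not as an application of i) to $P^{\op}$.
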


\begin{proof}
We only show ii), the proof for i) being similar. 
Observe first that $\Int U_p \subseteq U_p \setminus \{ p \}$, since $p \not\ll p$. 
The proof is done by contradiction. 
If $M \not= 0$, then, by mirroring the proof of Proposition \ref{Prop:SemiContCounter}, the support $\supp M$ has a maximal element $p_0$. 
Thus, 
\begin{align*}
  \soc^{\sigma}(M)_{p_0} 
  = \bigcap_{x \gg p_0} \Ker M(p_0 \le x) 
  \supseteq \bigcap_{x > p_0} \Ker M(p_0 \le x) 
  = \bigcap_{x > p_0} M_{p_0} 
  = M_{p_0} 
  \not= 0, 
\end{align*}
which is a contradiction. 
\end{proof}

\begin{remark}
The assumption $p \not\ll p$ is essential for the previous theorem. 
For example, over $\Zset^n$ the Scott-top is the zero functor, 
since $\Rad^{\sigma}(M) = M$ for all persistence modules $M$ over $\Zset^n$. 
\end{remark}

\subsection{Homological Properties}

In this subsection, we utilize Proposition \ref{prop:socHom} to gain insight about the Scott-socle and Scott-top, and their derived functors. 
This also leads to results concerning semi-continuous modules.

\begin{proposition}\label{Prop:HereditarySoc}
Let $P$ be a continuous poset, and let $M$ be a persistence module over $P$. 
Then there exist 
\begin{enumerate}
  \item[ i)] an exact sequence $0 \to \soc^{\sigma}(M) \to M \to \underline{M} \to R^1 \soc^{\sigma}(M) \to 0$; 
  \item[ ii)] an isomorphism $R^{n + 1} \soc^{\sigma}(M)_p = H^n(\Int U_p, M^{\dagger})$ for all $n \ge 1$ and $p \in P$. 
\end{enumerate}
Here $R^n \soc^{\sigma}$ and $H^n$ are the right derived functors of $\soc^{\sigma}$ and $\Gamma$, respectively. 
\end{proposition}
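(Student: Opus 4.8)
The plan is to express $\soc^{\sigma}$ as a composite of functors whose derived functors we understand, and then read off both claims. By Proposition \ref{prop:socHom} i), we have $\soc^{\sigma}(M)_p = \Hom(k[U_p]/k[\Int U_p], M)$, and by Lemma \ref{Lemma:SheafCosheaf} i) the individual pieces are $\Hom(k[U_p], M) = M^{\dagger}(U_p) = M_p$ and $\Hom(k[\Int U_p], M) = M^{\dagger}(\Int U_p)$. The cleanest route is to work through the equivalence $\Fun(\posetC,\Mod) \simeq \Sh(P^a)$ and identify $R^n\soc^{\sigma}(M)_p$ with $R^n\Gamma(\Int U_p, -)$ applied to $M^{\dagger}$. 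So first I would observe that for each fixed $p$, the functor $M \mapsto \soc^{\sigma}(M)_p = \Ker\bigl(M^{\dagger}(U_p) \to M^{\dagger}(\Int U_p)\bigr)$ is, after the equivalence, the functor sending a sheaf $\mathcal{F}$ on $P^a$ to the kernel of the restriction $\mathcal{F}(U_p) \to \mathcal{F}(\Int U_p)$. Since $U_p$ is the minimal open neighborhood of $p$ and the restriction to the closed subset complementary to... more precisely, $\Ker(\Gamma(U_p,-) \to \Gamma(\Int U_p, -))$ is exactly sections supported on the closed set $U_p \setminus \Int U_p$, i.e. $\Gamma_{Z_p}(U_p, -)$ where $Z_p = U_p \setminus \Int U_p$ (this set is Alexandrov-closed in the subspace $U_p$ since $\Int U_p$ is an up-set).

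Next I would invoke the standard long exact sequence of local cohomology with supports: for the closed subset $Z_p \subseteq U_p$ with open complement $\Int U_p$, and any sheaf $\mathcal{F}$ on $P^a$,
\[
  0 \to \Gamma_{Z_p}(U_p, \mathcal{F}) \to \Gamma(U_p, \mathcal{F}) \to \Gamma(\Int U_p, \mathcal{F}) \to H^1_{Z_p}(U_p, \mathcal{F}) \to \cdots
\]
and, crucially, $H^n(U_p, \mathcal{F}) = \mathcal{F}_p$ for $n = 0$ and vanishes for $n \ge 1$, because $U_p$ has a terminal object $p$ in its poset of opens — equivalently, $\Gamma(U_p, -)$ is exact on $\Sh(U_p^a)$, being evaluation at the stalk. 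Feeding this vanishing into the long exact sequence collapses it to
\[
  0 \to \Gamma_{Z_p}(U_p, \mathcal{F}) \to \mathcal{F}(U_p) \to \mathcal{F}(\Int U_p) \to H^1_{Z_p}(U_p, \mathcal{F}) \to 0
\]
together with isomorphisms $H^{n+1}_{Z_p}(U_p, \mathcal{F}) \cong H^n(\Int U_p, \mathcal{F})$ for all $n \ge 1$. Identifying $\mathcal{F}$ with $M^{\dagger}$, the middle map $\mathcal{F}(U_p) \to \mathcal{F}(\Int U_p)$ is precisely the canonical map $M_p \to \underline{M}_p$ by Remark \ref{Rmk:LineSectionCosection} (or directly by the formula $\underline{M}_p = M^{\dagger}(\Int U_p)$), and its kernel is $\soc^{\sigma}(M)_p$. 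It remains to check that $H^n_{Z_p}(U_p, M^{\dagger})$, as $p$ varies, assembles into $R^n\soc^{\sigma}(M)$, i.e. that $R^n\soc^{\sigma}$ computed in $\Fun(\posetC,\Mod)$ agrees pointwise with these local cohomology groups. This follows because injective objects of $\Fun(\posetC,\Mod)$ correspond to injective (in particular flasque) sheaves on $P^a$, whose sections over $U_p$ and $\Int U_p$ are exact in $M$, so a single injective resolution simultaneously computes everything; alternatively, $\soc^{\sigma}$ is the composite of the exact equivalence with the left-exact functor $\Gamma_{Z_p}$ and one checks the Grothendieck spectral sequence / composite-functor argument degenerates. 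Assembling over $p$ with the internal restriction maps gives i) and ii).

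The main obstacle I anticipate is the bookkeeping needed to make "assembles into a persistence module functorially in $p$" rigorous: one must verify that the boundary maps in the long exact sequence are natural not just in $M$ but also compatible with the internal morphisms $p \le q$, so that $p \mapsto H^n_{Z_p}(U_p, M^{\dagger})$ is genuinely a persistence module and agrees with the derived functor $R^n\soc^{\sigma}$ in the category $\Fun(\posetC,\Mod)$ (where derived functors are computed with a single injective resolution, then evaluated and differenced). The slick way around this is to avoid pointwise local cohomology entirely: present $\soc^{\sigma}$ itself as $\Hom(Q, -)$ where $Q$ is the persistence module $Q_p = k[U_p]/k[\Int U_p]$ — wait, that is not a single module but a family; rather, use that $\soc^{\sigma}(M)_p = \Hom_{\Fun(\posetC,\Mod)}(k[U_p]/k[\Int U_p], M)$, so $R^n\soc^{\sigma}(M)_p = \Ext^n_{\Fun(\posetC,\Mod)}(k[U_p]/k[\Int U_p], M)$, and then the short exact sequence $0 \to k[\Int U_p] \to k[U_p] \to k[U_p]/k[\Int U_p] \to 0$ yields a long exact $\Ext$ sequence; combined with $\Ext^n(k[U_p], M) = 0$ for $n \ge 1$ (since $k[U_p]$ is projective, being representable) this gives $R^{n+1}\soc^{\sigma}(M)_p \cong \Ext^n(k[\Int U_p], M) = H^n(\Int U_p, M^{\dagger})$ — the last equality because $k[\Int U_p]$ represents $\Gamma(\Int U_p, -)$ on Alexandrov sheaves and its higher $\Ext$ against $M$ compute sheaf cohomology. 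This reformulation makes the functoriality in $p$ automatic and is the version I would actually write up.
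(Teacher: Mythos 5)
Your final write-up --- identifying $R^n\soc^{\sigma}(M)_p = \Ext^n(k[U_p]/k[\Int U_p], M)$ and $H^n(\Int U_p, M^{\dagger}) = \Ext^n(k[\Int U_p], M)$, then applying the long exact $\Ext$ sequence to $0 \to k[\Int U_p] \to k[U_p] \to k[U_p]/k[\Int U_p] \to 0$ and using projectivity of $k[U_p]$ --- is exactly the paper's proof. The initial detour through local cohomology with supports $\Gamma_{Z_p}(U_p,-)$ is a correct alternative viewpoint, but as you yourself conclude, the $\Ext$ reformulation is cleaner and makes functoriality in $p$ automatic, so no issue there.
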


\begin{proof}
First, recall that by Proposition \ref{prop:socHom}, we have
\[
  \soc^{\sigma}(M)_p = \Hom(k[U_p] / k[\Int U_p], M) 
\]
for all $p \in P$. 
Also note that by Remark \ref{Rmk:LineSectionCosection} and Lemma \ref{Lemma:SheafCosheaf},
\[
  \underline{M}_p = \Gamma(\Int U_p, M^{\dagger}) = \Hom(k[\Int U_p], M)
\]
In particular, it follows that 
\[
  R^n\soc^{\sigma}(M)_p = \Ext^n(k[U_p] / k[\Int U_p], M) 
  \quad \text{and} \quad 
  H^n(\Int U_p, M^{\dagger}) = \Ext^n(k[\Int U_p], M)
\]
for all $p \in P$ and $n \in \Nset$. 

Now consider the short exact sequence 
\[
  0 \to k[\Int U_p] \to k[U_p] \to k[U_p] / k[\Int U_p] \to 0. 
\]
Here $k[U_p]$ is projective (\cite[Proposition 5]{HoppnerProjectivesFree}), 
implying that $\Ext^n(k[U_p], -)$ vanishes for $n \ge 1$. 
Now the long exact sequence of $\Hom(-,M)$ yields the desired exact sequence and the isomorphism. 
\end{proof}

\begin{proposition}\label{Prop:HereditaryTop}
Let $P$ be a continuous poset, and let $M$ be a persistence module over $P$. 
Then the following hold:
\begin{enumerate}
  \item[ i)] there exist an exact sequence $0 \to L_1 \topf^{\sigma}(M) \to \overline{M} \to M \to \topf^{\sigma}(M) \to 0$; 
  \item[ ii)] $L_{n} \topf^{\sigma}(M) = 0$ for all $n \ge 2$. 
\end{enumerate}
Here $L_n \topf^{\sigma}$ denotes the left derived functor of $\topf^{\sigma}$.  
\end{proposition}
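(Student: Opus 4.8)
The plan is to mirror the proof of Proposition \ref{Prop:HereditarySoc}, exploiting the tensor-product description of the Scott-top from Proposition \ref{prop:socHom} ii) together with the duality between the two adjunctions. By Proposition \ref{prop:socHom} ii), we have $\topf^{\sigma}(M)_p = (k[D_p]/k[{\twoheaddownarrow} p]) \otimes_{\posetC} M$ for all $p \in P$, where the indicator modules are taken over $P^{\op}$. Likewise, by Remark \ref{Rmk:LineSectionCosection} and Lemma \ref{Lemma:SheafCosheaf} ii), the cosection description gives $\overline{M}_p = \hat{M}({\twoheaddownarrow} p) = k[{\twoheaddownarrow} p] \otimes_{\posetC} M$. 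Consequently $L_n\topf^{\sigma}(M)_p = \Tor_n(k[D_p]/k[{\twoheaddownarrow} p], M)$ and the $n$-th left-derived functor of $p \mapsto \overline{M}_p$ is $\Tor_n(k[{\twoheaddownarrow} p], M)$, for all $p \in P$ and $n \in \Nset$.

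Next I would write down, over the opposite poset $P^{\op}$, the short exact sequence
\[
  0 \to k[{\twoheaddownarrow} p] \to k[D_p] \to k[D_p]/k[{\twoheaddownarrow} p] \to 0
\]
and apply $- \otimes_{\posetC} M$ to obtain the associated long exact sequence of $\Tor$. The crucial point, dual to the projectivity of $k[U_p]$ used in Proposition \ref{Prop:HereditarySoc}, is that $k[D_p]$ over $P^{\op}$ is a basic up-set indicator module over $P^{\op}$ (since the down-set $D_p$ of $P$ is the up-set ${\uparrow}p$ of $P^{\op}$), hence projective by \cite[Proposition 5]{HoppnerProjectivesFree}; therefore $\Tor_n(k[D_p], M)$ vanishes for all $n \ge 1$. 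The long exact sequence then collapses: for $n \ge 2$ it yields $\Tor_n(k[D_p]/k[{\twoheaddownarrow} p], M) \cong \Tor_{n-1}(k[{\twoheaddownarrow} p], M)$, and in low degrees it gives
\[
  0 \to \Tor_1(k[D_p]/k[{\twoheaddownarrow} p], M) \to k[{\twoheaddownarrow} p]\otimes_{\posetC} M \to k[D_p]\otimes_{\posetC} M \to (k[D_p]/k[{\twoheaddownarrow} p])\otimes_{\posetC} M \to 0,
\]
which, after the identifications above and observing that $k[D_p]\otimes_{\posetC} M = \hat{M}(D_p) = M_p$ (as $D_p$ has a maximum), is precisely $0 \to L_1\topf^{\sigma}(M)_p \to \overline{M}_p \to M_p \to \topf^{\sigma}(M)_p \to 0$. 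Naturality in $p$ upgrades these pointwise sequences to sequences of persistence modules, proving i).

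For ii), I still need $\Tor_{n-1}(k[{\twoheaddownarrow} p], M) = 0$ for $n \ge 2$, i.e.\ $\Tor_m(k[{\twoheaddownarrow} p], M) = 0$ for all $m \ge 1$; equivalently, $k[{\twoheaddownarrow} p]$ is a flat persistence module over $P^{\op}$, or better, that it has projective dimension at most... actually the cleanest route is to exhibit $k[{\twoheaddownarrow} p]$ over $P^{\op}$ as a \emph{filtered colimit} of basic up-set indicator modules $k[{\uparrow}x] = k[D_x]$ over $P^{\op}$. Indeed, since ${\twoheaddownarrow} p = \bigcup_{x \ll p} {\twoheaddownarrow} x$... hmm, rather: by the interpolation property, ${\twoheaddownarrow} p$ is the directed union of the basic down-sets $D_x = {\downarrow}x$ over $x \ll p$, so $k[{\twoheaddownarrow} p] = \varinjlim_{x \ll p} k[D_x]$ as persistence modules over $P^{\op}$, a filtered colimit of projectives. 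Since $\Tor$ commutes with filtered colimits, $\Tor_m(k[{\twoheaddownarrow} p], M) = \varinjlim_{x \ll p}\Tor_m(k[D_x], M) = 0$ for all $m \ge 1$. This is the step I expect to be the main obstacle — one must check carefully that the transition maps in $\varinjlim_{x \ll p} k[D_x]$ (the inclusions $D_x \subseteq D_y$ for $x \le y \ll p$) are the right ones and that the colimit of the section-wise descriptions really is $k[{\twoheaddownarrow} p]$, using that ${\twoheaddownarrow} p$ is directed; granting that, ii) follows, and i) is then complete as above.
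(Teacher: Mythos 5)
Your proof is correct and takes essentially the same approach as the paper: both use the short exact sequence $0 \to k[{\twoheaddownarrow} p] \to k[D_p] \to k[D_p]/k[{\twoheaddownarrow} p] \to 0$ and the projectivity of $k[D_p]$ over $P^{\op}$ to get the exact sequence and the identification $L_{n+1}\topf^{\sigma}(M)_p \cong \Tor_n(k[{\twoheaddownarrow} p], M)$, and both reduce ii) to the exactness of the filtered colimit $\varinjlim_{x\ll p}$. The only (cosmetic) difference is that the paper observes directly that $M \mapsto \hat{M}({\twoheaddownarrow} p) = \varinjlim_{x\ll p} M_x$ is already an exact functor, whereas you first exhibit $k[{\twoheaddownarrow} p]$ as a filtered colimit of projectives $k[D_x]$ and then invoke that $\Tor$ commutes with filtered colimits — equivalent reasoning, and the step you flagged as the ``main obstacle'' is in fact unproblematic (one needs only directedness of ${\twoheaddownarrow} p$, i.e.\ continuity of $P$, not interpolation, for the union $\bigcup_{x\ll p} D_x = {\twoheaddownarrow} p$ and the filteredness of the colimit).
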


\begin{proof}
As in the proof of Proposition \ref{Prop:HereditarySoc}, we get the exact sequence of i) and an isomorphism 
\[
  L_{n + 1} \topf^{\sigma}(M)_p = H_n({\twoheaddownarrow} p, \hat{M})
\]
for all $n \ge 1$ and $p \in P$. 
By Remark \ref{Rmk:LineSectionCosection}, the cosection module $\hat{M}({\twoheaddownarrow} p)$ is the colimit $\varinjlim_{x \ll p} M_x$. 
The direct limits is an exact functor, which shows that 
\[
  H_n({\twoheaddownarrow} p, \hat{M}) = L_n \varinjlim_{x \ll p} M_x = 0
\]
for all $n \ge 1$. 
\end{proof}

We point out that the higher right derived functors of the Scott-socle often vanish. 
We call a persistence module $M$ over a poset $P$ is \emph{pointwise noetherian} (resp.\ \emph{artinian}) if $M_p$ is noetherian (resp.\ artinian) for all $p \in P$.

\begin{proposition}\label{Prop:SocVanish}
Let $P$ be a continuous poset such that $\Int U_p$ is downward directed for all $p \in P$, and let $M$ be a pointwise artinian persistence module over $P$. 
If $k$ is a noetherian ring or $M$ is pointwise noetherian, 
then 
\[
  R^n \soc^{\sigma}(M) = 0
\]
for all $n \ge 2$. 
\end{proposition}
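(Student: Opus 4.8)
The plan is to reduce the claim to the vanishing of the higher derived functors of an inverse limit over the poset $\Int U_p$, and then to invoke a theorem of Jensen on such derived limits for inverse systems satisfying a chain condition. Concretely, by Proposition~\ref{Prop:HereditarySoc}~ii) we have, for every $p\in P$ and every $N\ge 2$,
\[
  R^{N}\soc^{\sigma}(M)_p = H^{N-1}(\Int U_p, M^{\dagger}),
\]
and, as recorded in the proof of that proposition, $H^{m}(\Int U_p, M^{\dagger})=\Ext^{m}(k[\Int U_p],M)$ is the $m$-th right derived functor of the section functor $\Gamma(\Int U_p,-)$ evaluated at $M^{\dagger}$. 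Since $\Int U_p$ is an up-set, hence open in the Alexandrov topology, and $\Gamma(\Int U_p,M^{\dagger})=\varprojlim_{x\gg p}M_x=\underline{M}_p$, this is precisely the derived inverse limit $\varprojlim^{(m)}_{x\in\Int U_p}M_x$ taken over the poset $\Int U_p$. Hence it suffices to show that $\varprojlim^{(m)}_{x\in\Int U_p}M_x=0$ for all $m\ge 1$ and all $p\in P$. At this point the hypothesis on $\Int U_p$ is used: because $\Int U_p={\twoheaduparrow}p$ is downward-directed, its opposite is a directed set, so $(M_x)_{x\in\Int U_p}$ is an inverse system of $k$-modules indexed by a directed set in the usual sense.

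Next I would check that this inverse system is Mittag-Leffler. Fixing $x_0\in\Int U_p$, the images $\ImF M(x\le x_0)\subseteq M_{x_0}$, as $x$ runs over $\{x\in\Int U_p\mid x\le x_0\}$, form a downward-directed family of submodules (using that $\Int U_p$ is downward-directed and that internal morphisms compose); since $M_{x_0}$ is artinian this family has a least member, which is therefore the eventual value of the images. Consequently the system is pro-isomorphic to the surjective inverse system of these stable images, which is again an inverse system of artinian modules and, under the extra hypothesis, of finite-length modules. One then applies the relevant theorem of Jensen on the higher derived functors of $\varprojlim$: the derived limits of a surjective inverse system of artinian (equivalently, linearly compact) modules over a directed set vanish in positive degrees. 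This yields $\varprojlim^{(m)}_{x\in\Int U_p}M_x=0$ for all $m\ge 1$, and the proposition follows. I expect the role of the noetherian hypothesis to be exactly to make the stabilized stalks of finite length, which is what lets Jensen's vanishing be applied to the term $\varprojlim^{(1)}$ irrespective of the cardinality of $\Int U_p$; the higher terms $\varprojlim^{(m)}$, $m\ge 2$, can alternatively be killed by Jensen's cofinality bound combined with the Mittag-Leffler property.

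The main obstacle is this last step. Over an index poset of uncountable cofinality --- which already occurs for some continuous posets --- higher derived inverse limits need not vanish even for Mittag-Leffler systems, so the argument genuinely depends on the chain conditions imposed on $M$ (through linear compactness, respectively finite length, of the stalks) and on quoting the correct form of Jensen's theorem together with a careful check of its hypotheses. Everything preceding it is routine: the reduction via Proposition~\ref{Prop:HereditarySoc}, the identification of sheaf cohomology on an Alexandrov-open set with a derived limit, and the elementary verification of the Mittag-Leffler condition from pointwise artinianness. (Note that the dual statement for $\topf^{\sigma}$ is easy, as in Proposition~\ref{Prop:HereditaryTop}, precisely because directed colimits are exact; it is the inverse limit side that requires work.)
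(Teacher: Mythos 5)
Your argument follows the paper's proof: both reduce, via Proposition~\ref{Prop:HereditarySoc}, to showing that the derived inverse limits $\varprojlim^{(n)}_{x\in\Int U_p}M_x$ vanish for $n\ge 1$ over the directed index set $\Int U_p={\twoheaduparrow}p$, and both then appeal to Jensen's vanishing theorem \cite[Proposition 1.1]{JensenRLim} for inverse systems of artinian modules over a noetherian ring (or of finite-length modules over an arbitrary ring). The Mittag--Leffler/surjective-system reduction you interpose is unnecessary --- Jensen's result already applies directly under the stated hypotheses --- so the two proofs coincide in substance.
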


\begin{proof}
Since $\Int U_p = {\twoheaduparrow} p$ is downward-directed, 
we can use a result of Jensen \cite[Proposition 1.1]{JensenRLim}, which says that the derived functors $R^n \varprojlim_{x \gg p} M_x$ vanish for all $n \ge 1$. 
The isomorphism of Proposition \ref{Prop:HereditarySoc} now gives
\[
  R^{n + 1} \soc^{\sigma}(M)_p 
  = H^{n}(\Int U_p, M^{\dagger}) 
  = R^n \varprojlim_{x \gg p} M_x
  = 0
\]
for all $n \ge 1$ and $p \in P$. 
\end{proof}

\begin{remark}\label{Rmk:LocalCoh}
Let $P$ be a partially ordered abelian group. 
In the exact sequence of Proposition \ref{Prop:HereditarySoc} i), we can replace $\underline{M}$ by the $P$-graded $k[U_0]$-module 
\[
  \bigoplus_{p \in P} \Gamma(\Int U_0, j_*M(p)). 
\]
Indeed, 
\[
  \Gamma(\Int U_0, j_*M(p))
  =\Gamma(\Int U_0, M(p)^{\dagger}) 
  = \varprojlim_{x \gg 0} M(p)_x 
  = \varprojlim_{x \gg p} M_x 
  = \Gamma(\Int U_p, M^{\dagger})
\]
for all $p \in P$. 
Similarly, the isomorphism of ii) can be written as 
\[
  R^{n + 1} \soc^{\sigma}(M) = \bigoplus_{p \in P} H^n(\Int U_0, j_*M(p)). 
\]
We can now see Proposition \ref{Prop:HereditarySoc} as an analogue of the \emph{Serre-Grothendieck correspondence} between sheaf cohomology and local cohomology \cite[Theorem A4.1]{eisenbudCommAlgebra}. 
Proposition \ref{Prop:SocVanish} then also implies the vanishing of sheaf cohomology modules $H^n(\Int U_0, j_*M(p))$ for $n \ge 1$ and $p \in P$. 
\end{remark}

Propositions \ref{Prop:HereditarySoc} and \ref{Prop:HereditaryTop} lead to criteria for semi-continuity.

\begin{proposition}\label{Prop:SemiContHomCrit}
Let $P$ be a continuous poset, and let $M$ be a persistence module over $P$. 
Then 
\begin{enumerate}
    \item[ i)] $M$ is upper semi-continuous if and only if $R^n \soc^{\sigma}(M) = 0$ for $n = 0,1$; 
    \item[ ii)] $M$ is lower semi-continuous if and only if $L_n \topf^{\sigma}(M) = 0$ for $n = 0,1$. 
\end{enumerate}
\end{proposition}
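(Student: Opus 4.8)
The plan is to read off both equivalences directly from the four-term exact sequences established in Propositions~\ref{Prop:HereditarySoc}~i) and~\ref{Prop:HereditaryTop}~i), combined with the elementary fact that a morphism in an abelian category (here checked pointwise, since kernels and cokernels of persistence modules are computed pointwise) is an isomorphism precisely when both its kernel and its cokernel vanish.

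For i), I would start from the exact sequence
\[
  0 \to \soc^{\sigma}(M) \to M \xrightarrow{\ \eta\ } \underline{M} \to R^1\soc^{\sigma}(M) \to 0
\]
of Proposition~\ref{Prop:HereditarySoc}~i), where $\eta$ is the canonical morphism. By definition $R^0\soc^{\sigma}(M) = \soc^{\sigma}(M) = \Ker\eta$, and exactness on the right identifies $\Coker\eta$ with $R^1\soc^{\sigma}(M)$. Hence $\eta$ is an isomorphism if and only if $R^0\soc^{\sigma}(M) = 0 = R^1\soc^{\sigma}(M)$. Since $M$ being upper semi-continuous means exactly that $\eta$ is an isomorphism, this gives the claim.

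For ii), I would argue dually from the exact sequence
\[
  0 \to L_1\topf^{\sigma}(M) \to \overline{M} \xrightarrow{\ \theta\ } M \to \topf^{\sigma}(M) \to 0
\]
of Proposition~\ref{Prop:HereditaryTop}~i), with $\theta$ the canonical morphism. Here $L_0\topf^{\sigma}(M) = \topf^{\sigma}(M) = \Coker\theta$, while left exactness identifies $\Ker\theta$ with $L_1\topf^{\sigma}(M)$; thus $\theta$ is an isomorphism if and only if $L_0\topf^{\sigma}(M) = 0 = L_1\topf^{\sigma}(M)$, which is the asserted criterion for lower semi-continuity.

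There is no genuine obstacle here: the homological content already lives in Propositions~\ref{Prop:HereditarySoc} and~\ref{Prop:HereditaryTop}. The only point deserving a line of care is the bookkeeping — noting that the vanishing in degree $n=0$ encodes injectivity of the canonical morphism and the vanishing in degree $n=1$ its surjectivity, so that imposing both together is exactly equivalent to bijectivity.
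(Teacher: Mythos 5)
Your proof is correct and is exactly the argument the paper intends: the paper states this proposition without a written proof, remarking only that Propositions~\ref{Prop:HereditarySoc} and~\ref{Prop:HereditaryTop} ``lead to criteria for semi-continuity,'' and your reasoning — reading $R^0\soc^{\sigma}(M)$ and $R^1\soc^{\sigma}(M)$ off as the kernel and cokernel of $M \to \underline{M}$ in the four-term sequence, and dually for $\topf^{\sigma}$ — is precisely the step being left to the reader.
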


The Scott-socle, the Scott-top, and their first derived functors are related in the following way:

\begin{theorem}\label{Thm:SocTopConnection}
Let $P$ be a continuous poset, and let $M$ be a persistence module over $P$. 
Then 
\begin{enumerate}
  \item[ i)] $R^1 \soc^{\sigma}(\overline{M}) = \topf^{\sigma}(\underline{M})$; 
  \item[ ii)] $L_1 \topf^{\sigma}(\underline{M}) = \soc^{\sigma}(\overline{M})$. 
\end{enumerate}
\end{theorem}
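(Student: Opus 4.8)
The plan is to combine the four-term exact sequences from Propositions \ref{Prop:HereditarySoc} and \ref{Prop:HereditaryTop} with the composition identities of Proposition \ref{Prop:LineComposition} and the fact that $j_*$ is exact (Proposition \ref{FullyFaithfulAdjointCombo}). For item i), I would apply the exact sequence of Proposition \ref{Prop:HereditarySoc} i) to the module $\overline{M}$ in place of $M$, obtaining
\[
  0 \to \soc^{\sigma}(\overline{M}) \to \overline{M} \to \underline{\overline{M}} \to R^1\soc^{\sigma}(\overline{M}) \to 0.
\]
By Proposition \ref{Prop:LineComposition} iii), $\underline{\overline{M}} = \underline{M}$, so the cokernel of $\overline{M} \to \underline{M}$ is $R^1\soc^{\sigma}(\overline{M})$. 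On the other hand, the exact sequence of Proposition \ref{Prop:HereditaryTop} i) applied to $\underline{M}$ reads
\[
  0 \to L_1\topf^{\sigma}(\underline{M}) \to \overline{\underline{M}} \to \underline{M} \to \topf^{\sigma}(\underline{M}) \to 0,
\]
and Proposition \ref{Prop:LineComposition} iv) gives $\overline{\underline{M}} = \overline{M}$. Thus both $R^1\soc^{\sigma}(\overline{M})$ and $\topf^{\sigma}(\underline{M})$ are identified with the cokernel of the \emph{same} map $\overline{M} \to \underline{M}$; the only point to verify is that the map appearing in the first sequence (the canonical $\overline{M} \to \underline{\overline{M}} = \underline{M}$) agrees with the map appearing in the second sequence (the canonical $\overline{\underline{M}} = \overline{M} \to \underline{M}$). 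Both are instances of the canonical natural transformation relating the two functors, so this should follow from naturality of $M \to \underline{M}$ and $\overline{M} \to M$ together with Lemma \ref{jConnectionCont}, which expresses everything in terms of the unit and counit of the adjunctions $(j^*,j_*)$ and $(j_*,j^!)$.

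For item ii), the argument is symmetric: reading the same two four-term sequences the other way, $L_1\topf^{\sigma}(\underline{M})$ is the kernel of $\overline{\underline{M}} = \overline{M} \to \underline{M}$, while $\soc^{\sigma}(\overline{M})$ is the kernel of $\overline{M} \to \underline{\overline{M}} = \underline{M}$, and these coincide once the two canonical maps are identified. So really a single compatibility lemma about the canonical maps handles both parts at once.

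The main obstacle I anticipate is precisely this identification of the two canonical morphisms $\overline{M} \to \underline{M}$. A clean way to see it: by Lemma \ref{jConnectionCont}, $\underline{(-)} = j^!j_*$ and $\overline{(-)} = j^*j_*$, and the canonical map $M \to \underline{M}$ is the unit $\eta_M \colon M \to j^!j_*M$ of $(j_*,j^!)$, while $\overline{M} \to M$ is the counit $\varepsilon_M \colon j^*j_*M \to M$ of $(j^*,j_*)$. The composite $\overline{M} \xrightarrow{\varepsilon_M} M \xrightarrow{\eta_M} \underline{M}$ is one candidate; the map in the first sequence is $\eta_{\overline{M}} \colon \overline{M} \to j^!j_*\overline{M} = j^!j_*j^*j_*M$, which simplifies via $j_*j^* = \id$ (Proposition \ref{FullyFaithfulAdjointCombo} ii) to $j^!j_*M = \underline{M}$; the map in the second sequence is $\eta'$-type map $\overline{\underline M}\to\underline M$, i.e. $\varepsilon_{\underline M}\colon j^*j_*j^!j_*M \to j^!j_*M$, simplifying via $j_*j^! = \id$ likewise to $\underline M$. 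A short diagram chase using the triangle identities and $j_*j^* = j_*j^! = \id$ should show all of these equal the composite $\eta_M \circ \varepsilon_M$, at which point the cokernels (resp.\ kernels) literally agree and the theorem follows. I would isolate this as the one genuine computation and keep the rest as bookkeeping with the established propositions.
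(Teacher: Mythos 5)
Your proposal matches the paper's proof: both start from the four-term sequences of Propositions \ref{Prop:HereditarySoc} and \ref{Prop:HereditaryTop} applied to $\overline{M}$ and $\underline{M}$ respectively, reduce the middle terms via Proposition \ref{Prop:LineComposition}, and conclude by observing that the middle map $\overline{M}\to\underline{M}$ in each sequence is the composite $\overline{M}\to M\to\underline{M}$. The paper dispatches that last identification with ``it is easy to see,'' while you spell it out via naturality of the unit and counit and $j_*j^*=j_*j^!=\id$; that is the same argument at a slightly finer grain.
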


\begin{proof}
By Proposition \ref{Prop:LineComposition}, 
the exact sequence of Proposition \ref{Prop:HereditarySoc} i) for $\overline{M}$ takes the form 
\[
  0 \to \soc^{\sigma}(\overline{M}) \to \overline{M} \to \underline{M} \to R^1 \soc^{\sigma}(\overline{M}) \to 0. 
\]
Similarly, Proposition \ref{Prop:HereditarySoc} ii) gives the exact sequence 
\[
  0 \to L_1 \topf^{\sigma}(\underline{M}) \to \overline{M} \to \underline{M} \to \topf^{\sigma}({\underline{M}}) \to 0. 
\]
It is easy to see that in both exact sequences, the morphism $\overline{M} \to \underline{M}$ 
is the composition of the canonical morphisms $\overline{M} \to M$ and $M \to \underline{M}$. 
\end{proof}

\section{Ephemeral Modules and Scott Sheaves}

\subsection{Ephemeral Modules and Meager Sets}\label{Ephemeral}

\begin{definition}\label{def:Eph}
Let $P$ be a continuous poset. 
A persistence module $M$ over $P$ is called \emph{ephemeral} if $M(p \le q) = 0$ for all $p \ll q$. 
The full subcategory of ephemeral modules over $P$ is denoted by $\Eph(P)$ or just $\Eph$ if the poset is obvious from the context. 
\end{definition}

\begin{remark}
As the way-below relation in $\Rset$ is the normal strict order relation, 
our definition of ephemeral modules extends the definition given in \cite[p.\ 4]{Obs}. 
However, in the case of more general dense linearly ordered poset, our definition is slightly different. 
It follows from the definition of the way-below relation that if a poset $P$ contains a smallest element $\bm{0}$, then $\bm{0} \ll p$ for all $p \in P$. 
In particular, $\bm{0} \ll \bm{0}$. 
Therefore, in this case, the way-below relation and the strict order relations differ. 
This difference will not lead to any relevant deviation. 
\end{remark}

To get a better grasp on ephemeral modules, we introduce the notion of a \emph{meager set}.

\begin{definition}\label{meager}
Let $P$ be a continuous poset. 
A subset $S \subseteq P$ is \emph{meager} if the elements of $S$ are incomparable in the way-below relation. 
That is, if $x \ll y$, then $x \not\in S$ or $y \not\in S$. 
\end{definition}

\begin{proposition}\label{prop:MeagerLemma}
If $P$ is a continuous poset, then 
\begin{enumerate}[ i)]
  \item the set $U \setminus \Int U$ is meager for all up-sets $U \subseteq P$; 
  \item every meager set $S$ is a subset of $U \setminus \Int U$ for some up-set $U \subseteq P$. 
\end{enumerate}
In other words, the maximal meager sets are of the form $U \setminus \Int U$, where $U \subseteq P$ is an up-set. 
\end{proposition}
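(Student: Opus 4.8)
For part i), the plan is to argue directly from the definition of meagerness together with Lemma \ref{Fact:IntU} iii)--iv). Suppose $x \ll y$ with both $x, y \in U \setminus \Int U$. Since $y \in U$ and $x \ll y$, the point $y$ lies in ${\twoheaduparrow} x$, which by Lemma \ref{Fact:IntU} iii) equals $\Int({\uparrow} x)$. Because $x \in U$ and $U$ is an up-set, we have ${\uparrow} x \subseteq U$, hence ${\twoheaduparrow} x \subseteq \Int U$ by monotonicity of the interior operator. Thus $y \in \Int U$, contradicting $y \notin \Int U$. This shows no two elements of $U \setminus \Int U$ are comparable in $\ll$, so the set is meager.

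For part ii), given a meager set $S$, the natural candidate is the up-set $U = {\uparrow} S$. I would show $S \subseteq U \setminus \Int U$, i.e.\ that no element of $S$ lies in $\Int({\uparrow} S)$. By Lemma \ref{Fact:IntU} iv), $\Int({\uparrow} S) = \bigcup_{x \in {\uparrow} S} {\twoheaduparrow} x$. So if some $s \in S$ were in $\Int({\uparrow} S)$, there would be $x \in {\uparrow} S$ with $s \in {\twoheaduparrow} x$, i.e.\ $x \ll s$; and $x \in {\uparrow} S$ means $x \geq s'$ for some $s' \in S$. By basic property ii) of the way-below relation ($s' \leq x$ and $x \ll s$ give $s' \ll s$), we obtain $s' \ll s$ with $s', s \in S$. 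Meagerness forces $s' = s$ (the elements are incomparable in $\ll$, so $s' \ll s$ is only possible if... actually we need $s' \neq s$ to get a contradiction, but $s' \ll s$ with $s' = s$ would say $s$ is compact). Here I need to be slightly careful: if $s$ happens to be compact ($s \ll s$), then $s \in {\twoheaduparrow} s \subseteq \Int({\uparrow} S)$ automatically, so the claim as stated could fail for compact elements. The hypothesis hidden in the notion of meagerness — ``$x \ll y$ implies $x \notin S$ or $y \notin S$'' applied with $x = y = s$ — actually \emph{does} forbid $s \ll s$, so no element of a meager set is compact. With that observation, $s' \ll s$ and $s', s \in S$ is impossible, completing the argument.

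The final sentence, that the maximal meager sets are exactly the sets $U \setminus \Int U$, then follows formally: by i) each such set is meager, and by ii) every meager set is contained in one of them, so the maximal meager sets are precisely those of this form (noting that if $U \setminus \Int U \subseteq V \setminus \Int V$ for up-sets $U, V$, the two sets coincide by maximality, though strictly this last remark only needs i) and ii) as stated).

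\textbf{Main obstacle.} The only genuine subtlety is the treatment of compact elements in part ii): one must notice that a meager set can contain no compact element, since the defining condition applied to the pair $(s,s)$ rules out $s \ll s$. Once this is observed, everything reduces to routine manipulation of the interpolation-property consequences packaged in Lemma \ref{Fact:IntU} and the three basic properties of $\ll$.
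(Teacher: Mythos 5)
Your proof is correct and follows essentially the same route as the paper's, choosing $U = {\uparrow} S = \bigcup_{s\in S}U_s$ in part ii) and appealing to Lemma \ref{Fact:IntU} for the structure of $\Int U$. One small remark: the ``compact element'' subtlety you flag is not really a separate case. Once you have $s' \ll s$ with $s', s \in S$, that already contradicts the definition of meagerness directly, regardless of whether $s' = s$ or $s' \neq s$ (the definition's conclusion ``$x\notin S$ or $y\notin S$'' with $x=y=s$ reads ``$s\notin S$''). So your observation that no element of a meager set is compact is true but is just the diagonal instance of the general clash, not an extra hypothesis you need to establish first. The paper's version of ii) is slightly terser — it states $\Int U = \bigcup_{s\in S}{\twoheaduparrow}s$ as a consequence of Lemma \ref{Fact:IntU} iv) — whereas you explicitly unwind the step from $x \in {\uparrow}S$ to $s' \in S$ with $s' \le x$, invoking basic property ii) of $\ll$; that is exactly the detail the paper glosses over, so your write-up is a faithful elaboration of the same argument.
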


\begin{proof}
i) Assume that $x \ll y$. 
If $x \in U$, then $y \in \Int U$ so that $y \not\in U \setminus \Int U$. 

ii) We set $U = \bigcup_{x \in S} U_x$ for a given meager set $S$. 
By Lemma \ref{Fact:IntU} iv), $\Int U = \bigcup_{x \in S} \Int U_x$. 
If $p \in S$, then of course $p \in U$. 
We have to show that $p \not\in \Int U$. 
Assuming the contrary, if $p \in \Int U$, then $p \gg x$ for some $x \in S$. 
Since $S$ is meager, we have $p \not\in S$, which is a contradiction. 
\end{proof}

This now leads to a new characterization of ephemerality:

\begin{theorem}\label{meagerSupp}
Let $P$ be a continuous poset. 
A persistence module $M$ over $P$ is ephemeral 
if and only if the support $\supp(m)$ is meager for all $p \in P$ and $m \in M_p = M^{\dagger}(U_p)$. 
Here the support means the standard sheaf theoretic \emph{support of a section} i.e.\  
$\supp(m) = \{ x \in U_p \mid M(p \le x)(m) \not= 0 \}$. 
\end{theorem}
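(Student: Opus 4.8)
The plan is to prove both implications by unwinding the definition of ephemerality and comparing it with the meagerness of section supports. Recall that for $p \le q$ and $m \in M_p$, we have $M(p \le q)(m) = (m|_{U_q})$ under the identification $M_p = M^{\dagger}(U_p)$, and that $q \in \supp(m)$ if and only if $M(p \le q)(m) \neq 0$. So the key observation is simply that $\supp(m)$ is a subset of $U_p$, and that whether a relation $p \le q$ kills $m$ is exactly whether $q \notin \supp(m)$.

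First I would prove the forward direction. Suppose $M$ is ephemeral, fix $p \in P$ and $m \in M_p$, and suppose for contradiction that $\supp(m)$ is not meager, i.e.\ there exist $x, y \in \supp(m)$ with $x \ll y$. Since $x \in \supp(m)$, the element $m_x := M(p \le x)(m)$ is nonzero. Now $x \ll y$ together with ephemerality gives $M(x \le y) = 0$, so $M(x \le y)(m_x) = 0$. But functoriality forces $M(x \le y)(m_x) = M(x \le y)\bigl(M(p \le x)(m)\bigr) = M(p \le y)(m) = m_y$, which is nonzero because $y \in \supp(m)$. This contradiction shows $\supp(m)$ is meager.

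Next I would prove the converse. Suppose the support of every section is meager, and let $p \ll q$ in $P$. I must show $M(p \le q) = 0$, so take any $m \in M_p$. By the interpolation property (which holds since $P$ is continuous), there is $r \in P$ with $p \ll r \ll q$. If $M(p \le q)(m) \neq 0$, then $M(p \le r)(m)$ is also nonzero, since $M(p \le q)(m)$ factors through it; hence both $r$ and $q$ lie in $\supp(m)$. But $r \ll q$, contradicting the meagerness of $\supp(m)$. Therefore $M(p \le q)(m) = 0$ for all $m$, so $M(p \le q) = 0$, and $M$ is ephemeral.

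I do not expect any serious obstacle here: the statement is essentially a reformulation, and the only nontrivial input is the interpolation property of continuous posets (quoted earlier as following from \cite[Proposition 5.1.15]{goubault-larrecq_2013}), which is what lets a single relation $p \ll q$ be ``split'' so that a would-be nonzero image produces two comparable points in the support. The mildly delicate point worth stating carefully is the identification $M(p \le q)(m) = m|_{U_q}$ and hence $q \in \supp(m) \iff M(p \le q)(m) \neq 0$; once that is in place, both directions are immediate diagram-chases using functoriality of $M$.
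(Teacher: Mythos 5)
Your proof is correct, and the forward direction is essentially identical to the paper's (you phrase it by contradiction, the paper states it directly, but the content — $x \in \supp(m)$ and $x \ll y$ force $M(p\le y)(m)=M(x\le y)(M(p\le x)(m))=0$ — is the same). The converse, however, takes a genuinely different and slightly longer route than the paper's. You interpolate $p \ll r \ll q$ and then apply meagerness to the pair $(r,q)$, arguing that a nonzero image at $q$ forces a nonzero image at $r$ as well. The paper instead applies meagerness directly to the pair $(p,q)$: since $M(p\le p)(m)=m$, we have $p \in \supp(m)$ if and only if $m \neq 0$, so meagerness says either $m=0$ (whence $M(p\le q)(m)=0$ trivially) or $q \notin \supp(m)$ (whence $M(p\le q)(m)=0$ by definition). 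This is shorter and, notably, does not use the interpolation property at all. So your closing remark — that interpolation is ``the only nontrivial input'' and is ``what lets a single relation $p \ll q$ be split'' — misidentifies what drives the theorem: no splitting is needed, because $p$ itself already lies in the support of any nonzero $m \in M_p$. The theorem is a pure reformulation that only uses the definition of $\ll$ and the convention $M(p\le p)=\id$.
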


\begin{proof}
Suppose first that $M$ is ephemeral.
If $x \ll y$ and $x \in \supp(m)$, then $M(x \le y)(m) = 0$, and hence, $y \not\in \supp(m)$. 
Therefore, $\supp(m)$ is meager. 

Conversely, suppose that $\supp(m)$ is meager for all $p \in P$ and $m \in M_p$. 
If $p \ll q$, then $p \not\in \supp(m)$ or $q \not\in \supp(m)$. 
Thus, $m = M(p \le p)(m) = 0$ or $M(p \le q)(m) = 0$. 
But in both cases, $M(p \le q)(m) = 0$. 
So $M$ is ephemeral, as desired. 
\end{proof}

The next theorem lays the foundation for the rest of this article.

\begin{theorem}\label{EphWayBelow}
Let $P$ be a continuous poset. 
A persistence module $M$ over $P$ is ephemeral if and only if $j_* M = 0$. 
In other words, $\Eph = \Ker(j_*)$. 
\end{theorem}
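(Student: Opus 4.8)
The plan is to prove the two implications separately, relying on Theorem~\ref{meagerSupp} and on the description of the sections of $j_*M$ over basic Scott-open sets. Recall that for a persistence module $M$ we have the formula $(j_*M)(\Int U_p) = M^{\dagger}(\Int U_p) = \varprojlim_{x \gg p} M_x$, and that by Lemma~\ref{Fact:IntU}~ii) the collection $\{\Int U_p\}_{p \in P}$ is a basis of the Scott topology. Hence $j_*M = 0$ if and only if $\varprojlim_{x \gg p} M_x = 0$ for all $p \in P$, i.e.\ $\underline{M}_p = 0$ for all $p$, using Lemma~\ref{jConnectionCont}~i) and Remark~\ref{Rmk:LineSectionCosection}.

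First I would prove that ephemeral implies $j_*M = 0$. Fix $p \in P$ and take an element $s = (m_x)_{x \gg p} \in \varprojlim_{x \gg p} M_x$. I must show $s = 0$, i.e.\ $m_x = 0$ for every $x \gg p$. Given such an $x$, the interpolation property yields $y$ with $p \ll y \ll x$ (or simply $x \gg y \gg p$). Compatibility of the tuple gives $M(y \le x)(m_y) = m_x$, and ephemerality says $M(y \le x) = 0$ since $y \ll x$. Therefore $m_x = 0$, and as $x \gg p$ was arbitrary, $s = 0$. Thus $\varprojlim_{x \gg p} M_x = 0$ for all $p$, so $j_*M = 0$.

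Conversely, suppose $j_*M = 0$, i.e.\ $\varprojlim_{x \gg p} M_x = 0$ for all $p \in P$; I want to show $M$ is ephemeral. By Theorem~\ref{meagerSupp} it suffices to show that $\supp(m)$ is meager for every $q \in P$ and every $m \in M_q$. Suppose not: then there are $x \ll y$ with $x, y \in \supp(m)$, so $M(q \le x)(m) \neq 0$ and $M(q \le y)(m) \neq 0$. Set $m' = M(q \le x)(m) \in M_x$. For each $z \gg x$ we have $x \le z$, hence $z \ge x \ge q$ and $M(q \le z)(m) = M(x \le z)(m')$; I would like to produce a nonzero element of $\varprojlim_{z \gg x} M_z$ out of the family $(M(x \le z)(m'))_{z \gg x}$. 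This family is manifestly compatible and defines an element of the limit; it is nonzero because its component at any $z$ with $x \ll z \ll y$ (which exists by interpolation and satisfies $M(x \le z)(m') \neq 0$, since otherwise $M(q \le y)(m) = M(z \le y)M(x \le z)(m') = 0$) is nonzero. This contradicts $\varprojlim_{z \gg x} M_z = 0$. Hence $\supp(m)$ is meager and $M$ is ephemeral.

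The main obstacle is the ``only if'' direction: one must extract a \emph{nonzero} element of the limit $\varprojlim_{z \gg x} M_z$ from the hypothesis that $x$ lies in the support of a section whose support fails to be meager. The delicate point is choosing the index $x$ (rather than $q$ or $y$) so that the relevant internal map $M(x \le z)$ does not kill the pushed-forward element for $z$ way-below $y$; this is exactly where the interpolation property and the transitivity-type properties i)--iii) of the way-below relation are used. Once this is set up cleanly, the argument is short, and the equality $\Eph = \Ker(j_*)$ follows immediately.
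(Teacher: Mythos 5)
Your forward direction (ephemeral $\Rightarrow j_*M=0$) is essentially the paper's proof verbatim: pick $(m_x)_{x\gg p}$, interpolate $p \ll y \ll x$, and conclude $m_x = M(y\le x)(m_y) = 0$.

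For the converse you take a genuinely different route. The paper argues directly: if $p \ll q$ then $q \in \Int U_p = {\twoheaduparrow}p$, so $U_q \subseteq \Int U_p \subseteq U_p$, and hence the restriction $M(p\le q)\colon M(U_p) \to M(U_q)$ factors through $M(\Int U_p) = (j_*M)(\Int U_p) = 0$. Your version instead routes through Theorem~\ref{meagerSupp}, assumes a non-meager $\supp(m)$, and manufactures a nonzero element of $\varprojlim_{z\gg x}M_z$ from the compatible family $(M(x\le z)(m'))_{z\gg x}$, detecting its nonvanishing at an interpolant $z$ with $x\ll z\ll y$. This is correct --- the family is indeed compatible, and the component at $z$ is nonzero since otherwise $M(q\le y)(m) = M(z\le y)M(x\le z)(m')$ would vanish --- but it is more elaborate than needed: the paper's two-line factorization argument gets the same conclusion without invoking Theorem~\ref{meagerSupp}, and even your own contrapositive can be streamlined by noting directly that for $p\ll q$ with $M(p\le q)(m)\neq 0$ the family $(M(p\le z)(m))_{z\gg p}$ already has nonzero component at $z=q$ (since $q\in{\twoheaduparrow}p$), so the detour through $x$, $y$, and the meager-support characterization is not required. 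What your approach does buy is a tighter conceptual link between $\Ker(j_*)$ and the support-theoretic picture of Theorem~\ref{meagerSupp}, which is a pleasant alternative perspective even if it is not the shortest path.
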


\begin{proof}
Let us first assume that $M$ is ephemeral. 
Since the collection $\{ \Int U_p \}_{p \in P}$ is a basis of the Scott topology, 
it is enough to show that $(j_* M)(\Int U_p) = M(\Int U_p) = 0$ for all $p \in P$. 
Let 
\[
  (m_x)_{x \in \Int(U_p)} \in M(\Int U_p) = \varprojlim_{x \gg p} M_x. 
\] 
This means that $M(x \le y)(m_x) = m_y$ for all $x \le y$. 
We have to show that $m_x = 0$ for all $x \in \Int(U_p)$. 
If $x \gg p$, then there exists $y \in P$ such that $p \ll y \ll x$, since a continuous poset has the interpolation property. 
The assumption implies that $M(y \le x) = 0$. 
In particular, $m_x = M(y \le x)(m_{y}) = 0$. 
So $M(\Int U_p) = 0$. 

Let us then assume that $j_* M = 0$.
In other words, $M(\Int U_x) = 0$ for all $x \in P$. 
If $p \ll q$, then $q \in \Int U_p$, because $P$ is continuous. 
Now $U_q \subseteq \Int U_p \subseteq U_p$, since $U_q$ is the smallest up-set, which contains $q$. 
Thus, $M(p \le q)$ factors through $M(\Int U_p) = 0$, 
and hence, $M(p \le q) = 0$. 
\end{proof}

We next characterize ephemeral modules in terms of the associated semi-continuous modules.

\begin{proposition}\label{EphWayBelow2}
Let $P$ be a continuous poset, and let $M$ be a persistence module over $P$. 
Then the following conditions are equivalent: 
\begin{enumerate}[ i)]
  \item $M$ is ephemeral;
  \item $\overline{M} = 0$; 
  \item $\underline{M} = 0$. 
\end{enumerate}
\end{proposition}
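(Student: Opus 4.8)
The plan is to prove the equivalence of the three conditions by combining Theorem~\ref{EphWayBelow} with Lemma~\ref{jConnectionCont}. Recall that Lemma~\ref{jConnectionCont} gives $\underline{M} = j^!j_*M$ and $\overline{M} = j^*j_*M$ for any persistence module $M$ over a continuous poset. The strategy is to show that each of conditions ii) and iii) is equivalent to $j_*M = 0$, which is condition i) by Theorem~\ref{EphWayBelow}.

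First I would show i) $\Rightarrow$ ii) and i) $\Rightarrow$ iii): if $M$ is ephemeral, then $j_*M = 0$ by Theorem~\ref{EphWayBelow}, and applying the functors $j^*$ and $j^!$ to the zero sheaf yields $\overline{M} = j^*j_*M = j^*0 = 0$ and $\underline{M} = j^!j_*M = j^!0 = 0$, since additive functors send the zero object to the zero object. For the converse directions, suppose $\underline{M} = 0$, i.e.\ $j^!j_*M = 0$. Applying $j_*$ and using Proposition~\ref{FullyFaithfulAdjointCombo} ii), which states $j_*j^! = \id$, we get $j_*M = j_*j^!j_*M = j_*(j^!j_*M) = j_*0 = 0$, so $M$ is ephemeral by Theorem~\ref{EphWayBelow}. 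The argument for ii) $\Rightarrow$ i) is identical, using $j_*j^* = \id$ instead: from $\overline{M} = j^*j_*M = 0$ we obtain $j_*M = j_*j^*j_*M = j_*0 = 0$.

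Putting these implications together establishes the cycle i) $\Leftrightarrow$ ii) and i) $\Leftrightarrow$ iii), hence all three conditions are equivalent. The proof is essentially a formal consequence of the adjoint-triple machinery already assembled, so I do not anticipate any genuine obstacle; the only point requiring a moment's care is invoking the correct identity ($j_*j^! = \id$ versus $j_*j^* = \id$) from Proposition~\ref{FullyFaithfulAdjointCombo} ii) in each direction, and noting that the functors in question are additive so they preserve zero objects.
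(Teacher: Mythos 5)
Your proof is correct and follows essentially the same route as the paper's: reduce each of ii) and iii) to the condition $j_*M = 0$ via Lemma~\ref{jConnectionCont}, the identities $j_*j^* = \id$ and $j_*j^! = \id$ from Proposition~\ref{FullyFaithfulAdjointCombo}, and then invoke Theorem~\ref{EphWayBelow}. The paper states the chain of equivalences more tersely (and only writes out the case of $\overline{M}$, noting the other is similar), but the underlying argument is identical to yours.
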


\begin{proof}
Now by Proposition \ref{FullyFaithfulAdjointCombo}, Lemma \ref{jConnectionCont} and Theorem \ref{EphWayBelow}: 
\begin{align*}
  M \text{ is ephemeral} 
  \iff j_* M = 0 
  \iff \overline{M} = j^* j_* M = 0. 
\end{align*}
The another equivalence is proven similarly. 
\end{proof}

We use the Scott-radical, socle, and top to describe ephemeral modules.

\begin{proposition}
Let $P$ be a continuous poset, and let $M$ be a persistence module over $P$.  
Then the following conditions are equivalent: 
\begin{enumerate}[ i)]
  \item $M$ is ephemeral; 
  \item $\Rad^{\sigma}(M) = 0$; 
  \item $\soc^{\sigma}(M) = M$; 
  \item $\topf^{\sigma}(M) = M$. 
\end{enumerate}
\end{proposition}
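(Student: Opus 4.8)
The plan is to derive all three equivalences directly from the explicit pointwise descriptions recorded in Definition \ref{Def:RadSocTop}, reducing each condition to the defining property of an ephemeral module, namely ``$M(p \le q) = 0$ for all $p \ll q$''.

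First I would treat (i) $\iff$ (ii). By definition $\Rad^{\sigma}(M)_q = \bigcup_{x \ll q} \ImF M(x \le q)$ for every $q \in P$, and this directed union is a submodule of $M_q$. Hence $\Rad^{\sigma}(M) = 0$ holds precisely when $\ImF M(x \le q) = 0$, i.e.\ $M(x \le q) = 0$, for every $x \ll q$; this is exactly the condition defining $\Eph$, so (i) $\iff$ (ii). For (i) $\iff$ (iii) I would argue dually: $\soc^{\sigma}(M)_p = \bigcap_{x \gg p} \Ker M(p \le x)$ is a submodule of $M_p$, so the equality $\soc^{\sigma}(M) = M$ means $\Ker M(p \le x) = M_p$ for all $p \in P$ and all $x \gg p$, i.e.\ $M(p \le x) = 0$ whenever $p \ll x$ — once again the ephemerality condition. (One only has to note that when ${\twoheaduparrow} p = \emptyset$ the intersection is empty and equals $M_p$, consistently with the vacuous ephemerality condition at such a $p$.)

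Finally, (ii) $\iff$ (iv) is immediate from $\topf^{\sigma}(M) = M/\Rad^{\sigma}(M)$: the canonical surjection $M \to \topf^{\sigma}(M)$ has kernel $\Rad^{\sigma}(M)$, so it is an isomorphism if and only if $\Rad^{\sigma}(M) = 0$. Combining the three equivalences proves the proposition. Alternatively, one could route the argument through Proposition \ref{EphWayBelow2}: ephemerality is equivalent to $\overline{M} = 0$, which forces $\Rad^{\sigma}(M) = \ImF(\overline{M} \to M) = 0$, and dually $\underline{M} = 0$ forces $\soc^{\sigma}(M) = \Ker(M \to \underline{M}) = M$; the reverse implications are then recovered from the pointwise formulas exactly as above.

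There is essentially no hard step here; the only point requiring care is the bookkeeping of the index sets, making sure that the relevant unions and intersections really range over \emph{all} $x$ with $x \ll q$, resp.\ $x \gg p$, so that the pointwise vanishing conditions assemble precisely into the single relation ``$M(p \le q) = 0$ for all $p \ll q$'' that defines $\Eph$.
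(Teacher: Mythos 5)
Your proof is correct and takes essentially the same approach as the paper's, which also derives all four equivalences directly from the pointwise formulas for $\Rad^{\sigma}(M)$ and $\soc^{\sigma}(M)$. You have merely spelled out the details the paper leaves implicit (and added a harmless alternative route via Proposition~\ref{EphWayBelow2}); the only minor nit is the phrasing ``the intersection is empty'' for the empty index set — you mean the empty intersection over ${\twoheaduparrow} p = \emptyset$ is all of $M_p$, which is what you conclude.
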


\begin{proof}
The claim immediately follows from the formulas 
\[
  \Rad^{\sigma}(M)_p = \bigcup_{x \ll p} \ImF M(x \le p)
  \quad \text{and} \quad 
  \soc^{\sigma}(M)_p = \bigcap_{x \gg p} \Ker M(p \le x), 
\]
where $p \in P$. 
\end{proof}

\subsection{Indicator Modules}

In this section, we will investigate semi-continuity of indicator modules.
The following Proposition \ref{prop:jReuna} shows that the functor $j_*$ ignores the topological boundary. 
It can be seen in the context of \cite[Proposition 2.20]{Obs} and \cite[Propositions 4.2 and 4.3]{Miller3}.

\begin{proposition}\label{prop:jReuna}
If $P$ is a continuous poset, then 
\begin{enumerate}[i) ]
  \item $j_* k[U] = j_* k[\Int U]$; 
  \item $j_* k[D] = j_* k[\overline{D}]$. 
\end{enumerate}
for all up-sets $U \subseteq P$ and down-sets $D \subseteq P$. 
\end{proposition}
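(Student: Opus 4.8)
The plan is to produce, in each case, a canonical monomorphism of persistence modules whose cokernel is ephemeral, and then invoke the exactness of $j_*$ (Proposition \ref{FullyFaithfulAdjointCombo}) together with the identification $\Eph = \Ker(j_*)$ from Theorem \ref{EphWayBelow}. For i), the inclusion $\Int U \subseteq U$ of up-sets induces a monomorphism $k[\Int U] \hookrightarrow k[U]$; I set $Q_U = k[U]/k[\Int U]$, a persistence module with $(Q_U)_x = k$ for $x \in U \setminus \Int U$ and $(Q_U)_x = 0$ otherwise. For ii), I would first record that $\overline{D}$, being Scott-closed, is itself a down-set: if $x \le y$ with $y \in \overline{D}$, then any Scott-open neighborhood of $x$ is an up-set containing $x$, hence containing $y$, hence meets $D$, so $x \in \overline{D}$. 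Thus $k[\overline{D}]$ is defined, the inclusion $D \subseteq \overline{D}$ gives a monomorphism $k[D] \hookrightarrow k[\overline{D}]$, and its cokernel $Q_D$ satisfies $(Q_D)_x = k$ for $x \in \overline{D} \setminus D$ and $(Q_D)_x = 0$ otherwise.

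The heart of the proof is to verify that $Q_U$ and $Q_D$ are ephemeral, i.e.\ that $Q(p \le q) = 0$ whenever $p \ll q$. For $Q_U$: if $p \notin U \setminus \Int U$ the source already vanishes; if $p \in U \setminus \Int U$, then $p \in U$, so by Lemma \ref{Fact:IntU} iv) we get $q \in {\twoheaduparrow} p \subseteq \Int U$ (as $p \ll q$), whence the target vanishes. For $Q_D$: if $q \notin \overline{D} \setminus D$ the target vanishes; if $q \in \overline{D} \setminus D$, then $q \in \overline{D}$ and ${\twoheaduparrow} p = \Int({\uparrow}p)$ is a Scott-open neighborhood of $q$ by Lemma \ref{Fact:IntU} iii), since $p \ll q$. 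Hence ${\twoheaduparrow} p$ meets $D$, so there is $d \in D$ with $p \ll d$; then $p \le d$, and since $D$ is a down-set, $p \in D$, so the source vanishes.

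It then remains to apply the exact functor $j_*$ to the short exact sequences
\[
  0 \to k[\Int U] \to k[U] \to Q_U \to 0
  \quad \text{and} \quad
  0 \to k[D] \to k[\overline{D}] \to Q_D \to 0,
\]
and use $j_* Q_U = 0 = j_* Q_D$ from Theorem \ref{EphWayBelow} to conclude that the inclusions induce isomorphisms $j_* k[\Int U] \to j_* k[U]$ and $j_* k[D] \to j_* k[\overline{D}]$.

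I expect no conceptual obstacle; the only care needed is bookkeeping. One must keep track that $\Int$ and $\overline{(-)}$ here refer to the Scott topology and that $\overline{D}$ stays a down-set (so that $k[\overline{D}]$ is defined and the down-set property is available in the $Q_D$ step), and one should note that $Q_U$ need not be an indicator module, since $U \setminus \Int U$ need not be convex — but this is harmless, as only the pointwise quotient is used. The genuinely load-bearing ingredients are the interpolation-property consequences in Lemma \ref{Fact:IntU} and the exactness of $j_*$ from Proposition \ref{FullyFaithfulAdjointCombo}.
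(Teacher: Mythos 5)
Your overall strategy is the same as the paper's: produce a short exact sequence placing the difference of the two indicator modules against an ephemeral module, then apply the exactness of $j_*$ and $\Eph = \Ker(j_*)$. Part i) is correct; your direct verification that $U \setminus \Int U$ is meager is just an unwinding of Proposition~\ref{prop:MeagerLemma}~i), which is what the paper cites.

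Part ii) contains a genuine error. For down-sets $D_1 \subseteq D_2$, the inclusion does \emph{not} induce a monomorphism $k[D_1] \hookrightarrow k[D_2]$; the canonical nonzero morphism goes the other way, $k[D_2] \twoheadrightarrow k[D_1]$. To see the problem concretely, take $P = \Rset$, $D = \left]-\infty,0\right[$, $\overline{D} = \left]-\infty,0\right]$. Any morphism $\phi \colon k[D] \to k[\overline{D}]$ must satisfy, for $p < 0$, the naturality square for $p \le 0$, which forces $\mathrm{id}\circ \phi_p = \phi_0 \circ 0 = 0$, so $\phi_p = 0$; hence the only such morphism is zero, and in particular there is no monomorphism. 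The correct short exact sequence is
\[
  0 \to k[\overline{D} \setminus D] \to k[\overline{D}] \to k[D] \to 0,
\]
with $k[\overline{D}\setminus D]$ as the \emph{kernel} of the natural epimorphism. Your meagerness argument for $\overline{D}\setminus D$ (using that every Scott-open neighborhood ${\twoheaduparrow}p$ of a point $q \in \overline{D}$ meets $D$, then pulling down along the down-set $D$) is correct and is essentially the content the paper invokes; alternatively, note $\overline{D}\setminus D = U \setminus \Int U$ for the up-set $U = P\setminus D$, so Proposition~\ref{prop:MeagerLemma}~i) applies directly. With the corrected sequence, applying $j_*$ still yields $j_*k[\overline{D}] \cong j_*k[D]$, so the conclusion stands, but the exact sequence as you wrote it does not exist.
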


\begin{proof}
i) We have a short exact sequence 
\[
  0 \to k[\Int U] \to k[U] \to k[U] / k[\Int U] \to 0. 
\]
An application of the exact functor $j_*$, gives the short exact sequence 
\[
  0 \to j_* k[\Int U] \to j_* k[U] \to j_*(k[U] / k[\Int U]) \to 0. 
\]
On the right, the quotient $k[U] / k[\Int U]$ is ephemeral, 
since its support $U \setminus \Int U$ is meager by Proposition \ref{prop:MeagerLemma} i). 
Thus, by Theorem \ref{EphWayBelow} $j_*(k[U]/k[\Int U]) = 0$, which proves i). 

ii) Similarly, we have a short exact sequence 
\[
  0 \to k[\overline{D} \setminus D] \to k[\overline{D}] \to k[D] \to 0, 
\]
but now on the left, the set $\overline{D} \setminus D$ is meager so that $j_*(k[\overline{D} \setminus D]) = 0$. 
\end{proof}

As a consequence, we can now prove the following:

\begin{proposition}\label{prop:IntervalIntClosure}
Let $P$ be a continuous poset. 
Then 
\begin{enumerate}[ i)]
  \item $\overline{k[U]} = k[\Int U]$ for every up-set $U \subseteq P$; 
  \item $\overline{k[D]} = k[\overline{D}]$ for every down-set $D \subseteq P$. 
\end{enumerate}
In the case $P = \Rset^n$, we also have 
\begin{enumerate}
  \item[ iii)] $\underline{k[U]} = k[\overline{U}]$ for every up-set $U \subseteq \Rset^n$; 
  \item[ iv)] $\underline{k[D]} = k[\Int D]$ for every down-set $D \subseteq \Rset^n$. 
\end{enumerate}
Here the closure and the interior are taken with respect to the standard topology. 
\end{proposition}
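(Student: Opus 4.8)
The plan is to deduce everything from Lemma~\ref{jConnectionCont}, which identifies $\overline{(-)}=j^*j_*$ and $\underline{(-)}=j^!j_*$, together with Proposition~\ref{prop:jReuna}, which says $j_*$ collapses the boundary. For parts i) and ii), first I would note that $\Int U$ is an open up-set and $\overline{D}$ is a Scott-closed down-set (using Remark~\ref{Rmk:DpScottClosed}), so by Proposition~\ref{Prop:LowerSemiContCofinal} the indicator modules $k[\Int U]$ and $k[\overline{D}]$ are lower semi-continuous, i.e.\ $\overline{k[\Int U]}=k[\Int U]$ and $\overline{k[\overline{D}]}=k[\overline{D}]$. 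Then applying $\overline{(-)}=j^*j_*$ to the equalities $j_*k[U]=j_*k[\Int U]$ and $j_*k[D]=j_*k[\overline{D}]$ of Proposition~\ref{prop:jReuna} gives $\overline{k[U]}=\overline{k[\Int U]}=k[\Int U]$ and likewise $\overline{k[D]}=k[\overline{D}]$. Alternatively, one can compute $\overline{k[U]}_p=\varinjlim_{x\ll p}k[U]_x$ directly: this colimit is $k$ iff $x\in U$ for some $x\ll p$, i.e.\ iff $p\in{\twoheaduparrow}U=\Int U$ by Lemma~\ref{Fact:IntU} iv), and $0$ otherwise; the colimit maps are identities wherever both terms are nonzero by directedness of ${\twoheaddownarrow}p$. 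The analogous direct computation for $\overline{k[D]}_p=\varinjlim_{x\ll p}k[D]_x$ yields $k$ iff some $x\ll p$ lies in $D$; since $D$ is a down-set this holds iff ${\twoheaddownarrow}p\cap D\neq\emptyset$, and because ${\twoheaddownarrow}p$ is directed with supremum $p$, this is equivalent to $p\in\cl^\sigma D$, which for $P=\Rset^n$ (and in general, using Remark~\ref{Rmk:DpScottClosed} to characterize $\cl^\sigma$) equals $\overline{D}$ — though stating it as $\cl^\sigma D$ and invoking that $\cl^\sigma D = \overline D$ for down-sets in $\Rset^n$ is cleanest; since parts i) and ii) are asserted for a general continuous poset, I would phrase ii) as $\overline{k[D]} = k[\cl^\sigma D]$ internally and note $\cl^\sigma D = \overline D$ when $P=\Rset^n$, matching the statement.

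For parts iii) and iv), where $P=\Rset^n$, I would compute $\underline{k[U]}_p=\varprojlim_{x\gg p}k[U]_x$. An element is a compatible family $(m_x)_{x\gg p}$; such a family is nonzero iff $x\in U$ for \emph{all} $x\gg p$, i.e.\ iff ${\twoheaduparrow}p\subseteq U$. In $\Rset^n$, ${\twoheaduparrow}p=\{\,\bm y:y_i>p_i\ \forall i\,\}$, and ${\twoheaduparrow}p\subseteq U$ holds exactly when $p\in\overline{U}$ (the closure in the standard topology): if $p\in\overline U$ then every point strictly above $p$ is in the open-up-set-and-standard-open set $\Int^\sigma U\subseteq U$, while if $p\notin\overline U$ there is a whole neighborhood, hence some point $\gg p$, outside $U$. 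When nonzero the limit is $k$ because the index set ${\twoheaduparrow}p$ is downward-directed in $\Rset^n$ (this is where the hypothesis $P=\Rset^n$, or the downward-directedness of $\Int U_p$, is genuinely used, cf.\ Proposition~\ref{Prop:UpperSemiContCoInitial}). This gives $\underline{k[U]}=k[\overline U]$. For iv), $\underline{k[D]}_p=\varprojlim_{x\gg p}k[D]_x$ is $k$ iff ${\twoheaduparrow}p\subseteq D$; since $D$ is a down-set this means every point strictly above $p$ is below some point of $D$, which in $\Rset^n$ is equivalent to $p\in{\twoheaddownarrow}D=\Int^s D$ (using ${\twoheaduparrow}p\subseteq{\downarrow}D\iff p\in{\twoheaddownarrow}D$, which follows from the cone geometry), yielding $\underline{k[D]}=k[\Int D]$.

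The main obstacle I anticipate is the bookkeeping around when the relevant limit or colimit actually equals $k$ versus a larger module: the colimits in i)--ii) are automatically fine because ${\twoheaddownarrow}p$ is directed in any continuous poset, but the limits in iii)--iv) require ${\twoheaduparrow}p$ to be downward-directed (or at least connected), which is why those parts are restricted to $\Rset^n$; I would be careful to invoke exactly the $\Rset^n$-specific facts (the explicit form of $\ll$, and $\Int^s U={\twoheaduparrow}U$, $\Int^s D={\twoheaddownarrow}D$, $\cl^s D=\cl^\sigma D$ from the Remarks after Lemma~\ref{Fact:IntU}) rather than sneaking in unjustified generality. The cleanest write-up is probably: prove i) and ii) in the stated generality via Proposition~\ref{prop:jReuna} and Lemma~\ref{jConnectionCont} (plus Proposition~\ref{Prop:LowerSemiContCofinal} to evaluate $\overline{(-)}$ on the semi-continuous representatives), then prove iii) and iv) by the direct pointwise colimit/limit computation above, citing the $\Rset^n$ identifications of interior and closure.
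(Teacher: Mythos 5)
Your primary route for parts i) and ii) — via Proposition~\ref{prop:jReuna}, Lemma~\ref{jConnectionCont}, and lower semi-continuity of $k[\Int U]$ and $k[\overline{D}]$ from Proposition~\ref{Prop:LowerSemiContCofinal} — is exactly the paper's proof, and it is correct. For iii) and iv) the paper instead uses Remark~\ref{Rmrk:RnIntClosBound} to get $\Int\overline{U}=\Int U$ and applies Proposition~\ref{prop:jReuna} twice, yielding $j_*k[U]=j_*k[\overline{U}]$, then evaluates $\underline{(-)}=j^!j_*$ on the upper semi-continuous representative $k[\overline{U}]$ via Proposition~\ref{Prop:UpperSemiContCoInitial}. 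Your pointwise computation for iii) is a legitimate alternative and is correct, but the analogous step in iv) contains a genuine error, and the ``alternative'' direct computation you sketch for ii) has the same kind of flaw.

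The problem in iv) is the criterion ``\,$\underline{k[D]}_p$ is $k$ iff ${\twoheaduparrow}p\subseteq D$.'' For a down-set $D$ the correct criterion is ${\twoheaduparrow}p\cap D\neq\emptyset$, not ${\twoheaduparrow}p\subseteq D$. Indeed ${\twoheaduparrow}p$ is an unbounded open orthant in $\Rset^n$, so ${\twoheaduparrow}p\subseteq D$ can hold only for degenerate $D$; e.g.\ with $D=D_{(1,1)}\subseteq\Rset^2$ and $p=(0,0)\in\Int D$ we have $\underline{k[D]}_p=k$, yet $(2,2)\in{\twoheaduparrow}p\setminus D$. The source of the confusion is an asymmetry you carried over from iii): for an up-set $U$, the maps $k[U](x\le y)$ with $x\notin U$, $y\in U$ are $0\to k$, so a single point of ${\twoheaduparrow}p$ outside $U$ \emph{forces} every compatible family to vanish; but for a down-set $D$, the maps leaving $D$ are $k\to 0$, which place no constraint on the source, so a compatible family only has to be coherent on $D\cap{\twoheaduparrow}p$. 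Since $D\cap{\twoheaduparrow}p$ is coinitial in, and inherits downward-directedness from, ${\twoheaduparrow}p$, the limit is $k$ precisely when $D\cap{\twoheaduparrow}p\neq\emptyset$, i.e.\ $p\in{\twoheaddownarrow}D=\Int^s D$, which does give the stated answer. Relatedly, the auxiliary equivalence ``${\twoheaduparrow}p\subseteq{\downarrow}D\iff p\in{\twoheaddownarrow}D$'' you invoke is false in the $\Leftarrow$ direction. The alternative computation for ii) has the dual defect: ``${\twoheaddownarrow}p\cap D\neq\emptyset$'' is strictly weaker than $p\in\cl^\sigma D$ (take $D=D_{(0,0)}\subseteq\Rset^2$ and $p=(1,-1)$); the correct nonvanishing condition for the colimit is the existence of some $x\ll p$ with ${\uparrow}x\cap{\twoheaddownarrow}p\subseteq D$. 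Since you also give the correct abstract proof of ii), that slip is harmless, but iv) as written rests only on the flawed criterion and needs to be repaired — either by the corrected pointwise argument above or by following the paper's route through Proposition~\ref{prop:jReuna} applied twice.
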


\begin{proof}
i) By Proposition \ref{prop:jReuna}, we have $j_*k[U] = j_*k[\Int U]$. 
Therefore, Lemma \ref{jConnectionCont} and Proposition \ref{Prop:LowerSemiContCofinal} give 
\[
  \overline{k[U]} = j^*j_*k[U] = j^*j_*k[\Int U] = \overline{k[\Int U]} = k[\Int U]. 
\] 

iii) In Remark \ref{Rmrk:RnIntClosBound}, we have asserted that $\Int \overline{U} = \Int U$ 
for all up-sets $U \subseteq \Rset^n$. 
Thus, we get 
\[
  j_*k[U] = j_*k[\Int U] = j_*k[\Int \overline{U}] = j_*k[\overline{U}] 
\]
by utilizing Proposition \ref{prop:jReuna} twice. 
Now, just like in i), we obtain   
\[
  \underline{k[U]} 
  = j^!j_* k[U] 
  = j^! j_* k[\overline{U}] 
  = \underline{k[\overline{U}]} 
  = k[\overline{U}]. 
\]

Items ii) and iv) are  proven similarly as i) and iii). 
\end{proof}

This leads to criteria for semi-continuity of indicator modules.

\begin{theorem}\label{Thm:IntervalIntClosure2}
Let $P$ be a continuous poset, and let $U \subseteq P$ be an up-set and $D \subseteq P$ a down-set. 
Then 
\begin{enumerate}[ i)]
  \item $k[U]$ is lower semi-continuous if and only if $U \subseteq P$ is Scott-open; 
  \item $k[D]$ is lower semi-continuous if and only if $D \subseteq P$ is Scott-closed. 
\end{enumerate}
In the case $P = \Rset^n$, we also have
\begin{enumerate}
  \item[ iii)] $k[U]$ is upper semi-continuous 
    if and only if $U \subseteq \Rset^n$ is closed with respect to the standard topology; 
  \item[ iv)] $k[D]$ is upper semi-continuous 
    if and only if $D \subseteq \Rset^n$ is open with respect to the standard topology. 
\end{enumerate}
\end{theorem}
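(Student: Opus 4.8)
The plan is to reduce everything to the characterization of semi-continuity already available. For the lower semi-continuity statements i) and ii), recall that $k[I]$ is lower semi-continuous exactly when the canonical morphism $\overline{k[I]} \to k[I]$ is an isomorphism, and that Proposition \ref{prop:IntervalIntClosure} computes $\overline{k[U]} = k[\Int U]$ and $\overline{k[D]} = k[\overline{D}]$ (interiors and closures with respect to the Scott topology). So $k[U]$ is lower semi-continuous if and only if $k[\Int U] = k[U]$, i.e. $\Int U = U$, i.e. $U$ is Scott-open; and $k[D]$ is lower semi-continuous if and only if $k[\overline{D}] = k[D]$, i.e. $\overline{D} = D$, i.e. $D$ is Scott-closed. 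One has to be a little careful that the canonical morphism is the inclusion of subfunctors (for i)) resp. the quotient-to-submodule map is actually the identity when the supports agree; this is immediate from the explicit description of the canonical map as the restriction/corestriction on $k$ in each degree.

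For the upper semi-continuity statements iii) and iv) over $P = \Rset^n$, the same strategy applies using the other half of Proposition \ref{prop:IntervalIntClosure}: $\underline{k[U]} = k[\overline{U}]$ and $\underline{k[D]} = k[\Int D]$, where now the closure and interior are with respect to the \emph{standard} topology on $\Rset^n$. Since $\Rset^n$ is a partially ordered abelian group, $\Int U_p$ is downward-directed, so there is no pathology. Then $k[U]$ is upper semi-continuous if and only if $k[\overline{U}] = k[U]$, i.e. $\overline{U} = U$, i.e. $U$ is closed in the standard topology; and $k[D]$ is upper semi-continuous if and only if $k[\Int D] = k[D]$, i.e. $\Int D = D$, i.e. $D$ is open in the standard topology. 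Again the only subtlety is checking the canonical morphism $k[U] \to \underline{k[U]}$ is the obvious degreewise identity on $k$ where both are nonzero; this follows from its construction via the universal property of the limit.

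I would write the proof compactly: first state that in each of the four cases the canonical morphism $\overline{k[I]} \to k[I]$ (resp. $k[I] \to \underline{k[I]}$) is an isomorphism precisely when the two indicator modules have the same support, because the morphism is the identity on $k$ at each common point of the support; then invoke Proposition \ref{prop:IntervalIntClosure} to rewrite the support of the associated semi-continuous module, and finally translate equality of supports into the topological condition ($\Int U = U \Leftrightarrow U$ Scott-open, $\overline{D} = D \Leftrightarrow D$ Scott-closed, and the standard-topology analogues). The main obstacle — such as it is — is bookkeeping the two different topologies: lower semi-continuity of up-/down-sets is governed by the Scott interior/closure, while upper semi-continuity over $\Rset^n$ is governed by the standard interior/closure, and one must not conflate them. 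Everything else is a direct unwinding of definitions, so no genuinely hard step remains.
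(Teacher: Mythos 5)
Your proposal is correct and follows essentially the same route as the paper: both hinge on Proposition \ref{prop:IntervalIntClosure} (computing $\overline{k[U]}$, $\overline{k[D]}$, $\underline{k[U]}$, $\underline{k[D]}$ as indicator modules of the appropriate interior/closure) and then translate equality of indicator modules into equality of the underlying sets. The only cosmetic difference is that the paper handles the ``if'' direction by citing the earlier Proposition \ref{Prop:LowerSemiContCofinal} / \ref{Prop:UpperSemiContCoInitial} rather than re-verifying that the canonical morphism is an isomorphism, whereas you carry out that short check directly; both are sound.
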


\begin{proof}
The proofs of all items being similar, we prove only i).
In Proposition \ref{Prop:UpperSemiContCoInitial} we have already showed that $k[U]$ is lower semi-continuous 
for all Scott-open $U \subseteq P$.  
The converse follows from i) of the previous proposition. 
\end{proof}

A persistence module $M$ over a poset $P$ is said to have 
a \emph{down-set co-presentation} if there is an exact sequence $0 \to M \to E \to E'$ 
for some persistence modules $E$ and $E'$ over $P$ that are isomorphic to a direct sum of 
down-set indicator modules $k[D]$ (\cite[Definition 5.11]{Miller2}).

\begin{corollary}\label{Cor:FinCoPresLowerSemiCont}
Let $P$ be a continuous poset, and let $M$ be a persistence module over $P$. 
Assume that $M$ has a down-set co-presentation $0 \to M \to E \to E'$ 
where $E$ and $E'$ are direct sums of Scott-closed indicator modules $k[D]$.
Then $M$ is lower semi-continuous. 

This holds true, in particular, when the sets $D$ are basic down-sets. 
\end{corollary}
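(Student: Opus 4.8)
The plan is to use that $\overline{(-)}=j^{*}j_{*}$ is an exact functor and that the hypothesis presents $M$ as the kernel of a morphism between lower semi-continuous modules. First I would record that each indicator summand $k[D]$ occurring in $E$ and $E'$ is lower semi-continuous: since $D$ is Scott-closed, this is exactly Theorem~\ref{Thm:IntervalIntClosure2}~ii) (and is already contained in Proposition~\ref{Prop:LowerSemiContCofinal}). By Proposition~\ref{Prop:SumProdSemiCont}~i), lower semi-continuity passes to arbitrary direct sums, so $E$ and $E'$ are lower semi-continuous; equivalently, the canonical morphisms $\overline{E}\to E$ and $\overline{E'}\to E'$ are isomorphisms.

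Next I would apply $\overline{(-)}$ to the exact sequence $0\to M\to E\to E'$. The functor $\overline{(-)}=j^{*}j_{*}$ is exact, being the composite of the exact functor $j_{*}$ (Proposition~\ref{FullyFaithfulAdjointCombo}) with the inverse image functor $j^{*}$, which is exact; hence we obtain an exact sequence $0\to\overline{M}\to\overline{E}\to\overline{E'}$. Because the canonical morphism $\overline{(-)}\to\id$ is natural — it is the counit of the adjunction $(j^{*},j_{*})$, cf.\ Lemma~\ref{jConnectionCont} — there is a commutative diagram with exact rows
\[
\begin{tikzcd}
0 \arrow{r} & \overline{M} \arrow{r} \arrow{d} & \overline{E} \arrow{r} \arrow{d}{\cong} & \overline{E'} \arrow{d}{\cong} \\
0 \arrow{r} & M \arrow{r} & E \arrow{r} & E'
\end{tikzcd}
\]
in which the two right-hand verticals are isomorphisms. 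Then $\overline{M}$ and $M$ are both identified with the kernel of one and the same map $\overline{E}\to\overline{E'}$ (up to the vertical isomorphisms), and $\overline{M}\to M$ is the induced map on kernels; hence it is an isomorphism, i.e.\ $M$ is lower semi-continuous. For the final assertion I would simply recall from Remark~\ref{Rmk:DpScottClosed} that every basic down-set $D_{p}$ is Scott-closed, so a down-set co-presentation by basic down-set indicator modules satisfies the hypothesis.

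I do not expect a genuine obstacle. The only points requiring a moment's care are the exactness of $\overline{(-)}$ — in particular the left exactness of $j^{*}$, which is the standard fact that inverse image functors of sheaves are exact — and the verification that the vertical maps in the ladder are the canonical comparison morphisms, which is exactly the naturality of the counit; the remaining step is a routine diagram chase (or an application of the five lemma).
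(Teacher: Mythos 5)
Your proof is correct and takes essentially the same route as the paper: establish that the Scott-closed indicator modules and hence the direct sums $E,E'$ are lower semi-continuous (Theorem~\ref{Thm:IntervalIntClosure2} and Proposition~\ref{Prop:SumProdSemiCont}), apply the exact functor $\overline{(-)}$ to obtain the two-row commutative ladder, and conclude via the five lemma/diagram chase that $\overline{M}\to M$ is an isomorphism. The paper's argument is word for word the same apart from minor phrasing, so there is nothing to add.
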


\begin{proof}
Since $M$ has a down-set co-presentation, there exists a commutative diagram 
\[
\begin{tikzcd}
  0 \arrow{r} & \overline{M} \arrow{r} \arrow{d} & \overline{E} \arrow{r} \arrow{d} & \overline{E'} \arrow{d} \\
  0 \arrow{r} & M \arrow{r} & E \arrow{r} & E' 
\end{tikzcd}
\]
Here $E$ and $E'$ are direct sums of Scott-closed indicator modules $k[D]$, 
which are lower semi-continuous by Theorem \ref{Thm:IntervalIntClosure2}. 
Now, Proposition \ref{Prop:SumProdSemiCont} shows that $E$ and $E'$ are lower semi-continuous. 
The rows of the diagram are exact, because $\overline{(-)}$ is an exact functor. 
Thus, by the five lemma also $M$ is lower semi-continuous. 
\end{proof}

\begin{remark}
The above argument cannot be used to prove a dual of Corollary \ref{Cor:FinCoPresLowerSemiCont}. 
Firstly, the functor $\underline{(-)}$ is only left exact so that the five lemma does not work. 
Moreover, $k[U_p]$ is not generally an upper semi-continuous module 
(consider, for example, the poset $\Rset \cup \{ \infty \}$). 
\end{remark}

We can relate the Scott-socle and the Scott-top of an indicator module to the topological boundary as follows:

\begin{proposition}\label{Prop:RadSocTopInterval}
Over the poset $\Rset^n$, we have
\begin{enumerate}
  \item[ i)] $\soc^{\sigma}(k[D]) = k[\partial D]$ for all down-sets $D \subseteq \Rset^n$ closed in the standard topology; 
  \item[ ii)] $\topf^{\sigma}(k[U]) = k[\partial U]$ for all up-sets $U \subseteq \Rset^n$ closed in the standard topology; 
  \item[ iii)] $R^1 \soc^{\sigma}(k[U]) = k[\partial U]$ for all up-sets $U \subseteq \Rset^n$ open in the standard topology; 
  \item[ iv)] $L_1 \topf^{\sigma}(k[D]) = k[\partial D]$ for all down-sets $D \subseteq \Rset^n$ open in the standard topology. 
\end{enumerate}
Here $\partial D$ (resp.\ $\partial U$) denotes the boundary of $D$ (resp.\ $U$) in the standard topology. 
\end{proposition}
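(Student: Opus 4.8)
The plan is to reduce everything to the boundary formulas for up-sets and down-sets collected in Remark \ref{Rmrk:RnIntClosBound}, namely $\partial U = \overline{U} \setminus {\twoheaduparrow} U$ and $\partial D = \overline{D} \setminus {\twoheaddownarrow} D$, together with the explicit formulas for $\Rad^{\sigma}$, $\soc^{\sigma}$, $\topf^{\sigma}$ from Definition \ref{Def:RadSocTop} and Example \ref{Ex:RadSocTopUpDown}, and the exact sequences of Propositions \ref{Prop:HereditarySoc} and \ref{Prop:HereditaryTop}. All four items are of the same flavor, so I would prove i) and iii) in detail and indicate that ii) and iv) are dual (via $P \leftrightarrow P^{\op}$, which swaps up-sets with down-sets, $\soc^{\sigma}$ with $\topf^{\sigma}$, and $R^1\soc^{\sigma}$ with $L_1\topf^{\sigma}$).

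For i), let $D \subseteq \Rset^n$ be a down-set closed in the standard topology, so $\overline{D} = D$. By Example \ref{Ex:RadSocTopUpDown} iv), $\soc^{\sigma}(k[D]) = k[D \setminus {\twoheaddownarrow} D]$. Since $D = \overline{D}$, Remark \ref{Rmrk:RnIntClosBound} gives $D \setminus {\twoheaddownarrow} D = \overline{D} \setminus {\twoheaddownarrow} D = \partial D$, so $\soc^{\sigma}(k[D]) = k[\partial D]$.

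For iii), let $U \subseteq \Rset^n$ be an up-set open in the standard topology. Since the Scott topology of $\Rset^n$ agrees with the standard topology on up-sets, $U$ is Scott-open, hence by Lemma \ref{Fact:IntU} $\Int U = U$; in particular $k[U]$ is lower semi-continuous, so $\overline{k[U]} = k[U]$. Apply Proposition \ref{Prop:HereditarySoc} i) to $M = k[U]$: this yields the exact sequence
\[
  0 \to \soc^{\sigma}(k[U]) \to k[U] \to \underline{k[U]} \to R^1 \soc^{\sigma}(k[U]) \to 0.
\]
By Proposition \ref{prop:IntervalIntClosure} iii), $\underline{k[U]} = k[\overline{U}]$. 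The map $k[U] \to k[\overline{U}]$ is the canonical morphism, which at every point of $U$ is the identity on $k$ and is zero elsewhere; hence it is a (pointwise) monomorphism, so $\soc^{\sigma}(k[U]) = 0$, and its cokernel is $k[\overline{U} \setminus U]$. By Remark \ref{Rmrk:RnIntClosBound}, $\overline{U} \setminus U = \overline{U} \setminus {\twoheaduparrow} U = \partial U$ (using $U = {\twoheaduparrow} U$ since $U$ is Scott-open). Therefore $R^1 \soc^{\sigma}(k[U]) = k[\partial U]$.

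Items ii) and iv) follow by the same argument over $P^{\op}$: a down-set of $\Rset^n$ open in the standard topology becomes an up-set which is Scott-open in $(\Rset^n)^{\op}$, and Proposition \ref{Prop:HereditaryTop} i) plays the role of Proposition \ref{Prop:HereditarySoc} i); for ii) one uses Example \ref{Ex:RadSocTopUpDown} v) together with $\Int U = {\twoheaduparrow} U$ and $U \setminus {\twoheaduparrow} U = \partial U$ when $U$ is closed. The only genuine point to check carefully is that the canonical morphisms $k[U] \to \underline{k[U]}$ and $\overline{k[D]} \to k[D]$ really are pointwise injective (resp.\ surjective) with the stated cokernel (resp.\ kernel) supported exactly on the boundary; this is where I expect the bookkeeping to be, though it is routine given the identifications $\Int U = U$, $\overline{U} = U$ etc.\ available in the standard/Scott topology of $\Rset^n$. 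I anticipate no serious obstacle beyond keeping the four dual cases straight.
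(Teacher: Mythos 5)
Your proof is correct and follows essentially the same route as the paper: items i) and ii) fall out of Example \ref{Ex:RadSocTopUpDown} combined with the boundary formulas of Remark \ref{Rmrk:RnIntClosBound}, and items iii) and iv) are read off the exact sequences of Propositions \ref{Prop:HereditarySoc} and \ref{Prop:HereditaryTop} after invoking Proposition \ref{prop:IntervalIntClosure}. The only small variation is that you justify $\soc^{\sigma}(k[U]) = 0$ in iii) via pointwise injectivity of the canonical map $k[U] \to k[\overline{U}]$, whereas the paper cites Example \ref{Ex:RadSocTopUpDown} iii) directly; both are valid since ${\twoheaduparrow} p \neq \emptyset$ for every $p \in \Rset^n$.
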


\begin{proof}
For i) and ii), since $D$ and $U$ are closed with respect to the standard topology, 
Remark \ref{Rmrk:RnIntClosBound} gives the formulas 
$\partial D = D \setminus {\twoheaddownarrow} D$ and $\partial U = U \setminus \Int U$. 
Thus, both claims follow from Example \ref{Ex:RadSocTopUpDown}. 

For iii) and iv), we note that because of Proposition  \ref{prop:IntervalIntClosure} iii) the exact sequence of Proposition  \ref{Prop:HereditarySoc} for $k[U]$ takes the form
\[
  0 \to \soc^{\sigma}(k[U]) \to k[U] \to k[\overline{U}] \to R^1 \soc^{\sigma}(k[U]) \to 0. 
\]
By Example \ref{Ex:RadSocTopUpDown}, $\soc^{\sigma}(k[U]) = 0$. 
Therefore, by utilizing the formulas of Remark \ref{Rmrk:RnIntClosBound}
\[
  R^1 \soc^{\sigma}(k[U]) = k[\overline{U}] / k[U] = k[\overline{U} \setminus U] = k[\partial U]. 
\]
The item iv) is proven as iii), but by using Proposition \ref{Prop:HereditaryTop} instead. 
\end{proof}

\begin{remark}
Over the poset $\Rset^n$, we also have
\begin{enumerate}
  \item[ i)] $R^1 \soc^{\sigma}(k[D]) = 0$ for all down-sets $D \subseteq \Rset^n$; 
 \item[ ii)] $L_1 \topf^{\sigma}(k[U]) = 0$ for all up-sets $U \subseteq \Rset^n$. 
\end{enumerate}
To check, for example, item i), the exact sequence of Proposition \ref{Prop:HereditarySoc} 
shows that $R^1\soc^{\sigma}(k[D])$ is the cokernel of the canonical morphism $k[D] \to \underline{k[D]}$. 
This is an epimorphism by Proposition \ref{prop:IntervalIntClosure} iv), proving the claim. 
\end{remark}

\subsection{The Quotient Category by the Ephemeral Modules}\label{subsec:ScottSheaves}

For a general reference for quotient categories, see, for example, \cite{popescu}. 
Let $\mathcal{A}$ be an abelian category, and let $\mathcal{C}$ be a full subcategory of $\mathcal{A}$. 
One calls $\mathcal{C}$ a \emph{Serre subcategory} 
if for all short exact sequences $0 \to M' \to M \to M'' \to 0$, there is an equivalence: 
\[
  M \in \mathcal{C} \iff M', M'' \in \mathcal{C}. 
\]
If $F \colon \mathcal{A} \to \mathcal{B}$ is an exact functor between abelian categories, 
then the kernel $\Ker(F) = \{ M \in \mathcal{A} \mid F(M) = 0 \}$ is a Serre subcategory of $\mathcal{A}$. 

For a Serre subcategory $\mathcal{C}$ of an abelian category $\mathcal{A}$, the \emph{quotient category} 
$\mathcal{A} / \mathcal{C}$ is defined. 
The quotient category comes with the \emph{quotient functor} $Q \colon \mathcal{A} \to \mathcal{A} / \mathcal{C}$. 
It is exact, essentially surjective, and $\Ker Q = \mathcal{C}$. 
The quotient category $\mathcal{A} / \mathcal{C}$ has the following universal property: 
For any exact functor $F \colon \mathcal{A} \to \mathcal{B}$ such that $\mathcal{C} \subseteq \Ker(F)$, 
there exists a factorization $F = H \circ Q$ for a unique exact functor $H \colon \mathcal{A} / \mathcal{C} \to \mathcal{B}$. 

If $\mathcal{C}$ is a Serre subcategory of an abelian category $\mathcal{A}$, 
then the full subcategories 
\[
  \mathcal{C}^{\perp} = \{ M \in \mathcal{A} \mid \Hom(X,M) = 0 = \Ext^1(X,M) \text{ for all } X \in \mathcal{C} \} 
\]
and 
\[
  {}^{\perp}\mathcal{C} = \{ M \in \mathcal{A} \mid \Hom(M,X) = 0 = \Ext^1(M,X) \text{ for all } X \in \mathcal{C} \}
\]
are called the \emph{right} and \emph{left perpendicular category}, respectively.

\begin{theorem}[{\cite[Theorem 2.1.]{Burban_2017}}]\label{QuotientKernel}
Let $G \colon \mathcal{A} \to \mathcal{B}$ be an exact functor between abelian categories 
with a right adjoint (resp.\ left adjoint) $F \colon \mathcal{B} \to \mathcal{A}$. 
If the natural transformation $G \circ F \to \id_{\mathcal{B}}$ (resp.\ $\id_{\mathcal{B}} \to G \circ F$) 
is an isomorphism and $\mathcal{C} = \Ker G$, then 
\begin{enumerate}[ i)]
  \item $G$ induces an equivalence between $\mathcal{A} / \mathcal{C}$ and $\mathcal{B}$; 
  \item $F$ is full embedding and its essential image $\ImF(F)$ coincides with $\mathcal{C}^{\perp}$ (resp.\ ${}^{\perp}\mathcal{C}$).
    In particular, $\mathcal{B}$ is equivalent to $\mathcal{C}^{\perp}$ (resp. ${}^{\perp}\mathcal{C}$). 
\end{enumerate}
\end{theorem}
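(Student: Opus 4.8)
The plan is to prove only the case where $F$ is a right adjoint and the counit $\varepsilon \colon GF \to \id_{\mathcal{B}}$ is an isomorphism; the statement for a left adjoint follows by passing to opposite categories. I would begin with two preliminaries. First, since $G$ is exact, $\mathcal{C} = \Ker G$ is a Serre subcategory (as noted above), so the quotient $\mathcal{A}/\mathcal{C}$ and the exact quotient functor $Q \colon \mathcal{A} \to \mathcal{A}/\mathcal{C}$ are at our disposal, and by the universal property $G$ factors as $G = \bar{G} \circ Q$ for a unique exact $\bar{G} \colon \mathcal{A}/\mathcal{C} \to \mathcal{B}$. Second, a right adjoint is fully faithful precisely when its counit is an isomorphism: under the adjunction bijection $\Hom_{\mathcal{A}}(FB, FB') \cong \Hom_{\mathcal{B}}(GFB, B')$ the action of $F$ on morphisms becomes precomposition with $\varepsilon_B$, so by Yoneda this action is bijective for all $B'$ iff $\varepsilon_B$ is invertible. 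Hence our hypothesis already makes $F$ a full embedding.

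For part i), I would take $Q \circ F \colon \mathcal{B} \to \mathcal{A}/\mathcal{C}$ as the candidate quasi-inverse of $\bar{G}$. One composite is immediate: $\bar{G} \circ Q \circ F = G \circ F \cong \id_{\mathcal{B}}$. For the other composite I would invoke the standard Gabriel criterion that a morphism $f$ in $\mathcal{A}$ is sent by $Q$ to an isomorphism if and only if $\Ker f$ and $\Coker f$ lie in $\mathcal{C}$. Applying this to the unit $\eta_M \colon M \to FGM$: the triangle identity $\varepsilon_{GM} \circ G(\eta_M) = \id_{GM}$ together with invertibility of $\varepsilon_{GM}$ shows $G(\eta_M)$ is an isomorphism, and exactness of $G$ then forces $G(\Ker \eta_M) = 0 = G(\Coker \eta_M)$, i.e. $\Ker \eta_M, \Coker \eta_M \in \mathcal{C}$. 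Thus $Q(\eta_M)$ is an isomorphism, natural in $M$; since $Q$ is a localization, this natural isomorphism descends to a natural isomorphism $\id_{\mathcal{A}/\mathcal{C}} \cong Q F \bar{G}$. So $\bar{G}$ is an equivalence.

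For part ii) it remains to show $\ImF(F) = \mathcal{C}^{\perp}$. For the inclusion $\ImF(F) \subseteq \mathcal{C}^{\perp}$: given $X \in \mathcal{C}$ and $B \in \mathcal{B}$, the adjunction gives $\Hom_{\mathcal{A}}(X, FB) \cong \Hom_{\mathcal{B}}(GX, B) = 0$; and for an extension $0 \to FB \xrightarrow{\iota} E \to X \to 0$, applying the exact functor $G$ and using $\varepsilon$ shows $G(\iota)$ is an isomorphism, whence by naturality of $\eta$ the map $\eta_E \circ \iota = FG(\iota) \circ \eta_{FB}$ is an isomorphism (as $\eta_{FB}$ is invertible by the triangle identity), so $\iota$ admits the retraction $(\eta_E \circ \iota)^{-1} \circ \eta_E$ and the extension splits — giving $\Ext^1_{\mathcal{A}}(X, FB) = 0$. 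For the reverse inclusion, take $M \in \mathcal{C}^{\perp}$; as in part i) the kernel $K$ and cokernel $C$ of $\eta_M$ lie in $\mathcal{C}$. Then $\Hom(K, M) = 0$ kills the monomorphism $K \hookrightarrow M$, so $K = 0$; the resulting sequence $0 \to M \xrightarrow{\eta_M} FGM \to C \to 0$ splits because $\Ext^1(C, M) = 0$, so $FGM \cong M \oplus C$; since $\mathcal{C}^{\perp}$ is closed under direct summands (being cut out by the additive functors $\Hom(X,-)$ and $\Ext^1(X,-)$) and $FGM \in \ImF(F) \subseteq \mathcal{C}^{\perp}$, we get $C \in \mathcal{C} \cap \mathcal{C}^{\perp}$, hence $\id_C = 0$ and $C = 0$. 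Therefore $\eta_M$ is an isomorphism and $M \cong FGM \in \ImF(F)$. Combining, $F$ is a full embedding with essential image exactly $\mathcal{C}^{\perp}$, so $\mathcal{B} \simeq \mathcal{C}^{\perp}$.

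I expect the main obstacle to be the clean bookkeeping around the $\Ext^1$-hypotheses: producing the splittings explicitly from the unit $\eta$ and the triangle identities, and making sure the Gabriel criterion and the $2$-categorical universal property of $Q$ are applied correctly when descending $Q(\eta)$ to a natural isomorphism on $\mathcal{A}/\mathcal{C}$. Everything else is formal manipulation of the adjunction $G \dashv F$.
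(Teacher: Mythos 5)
Your proof is correct in all essentials. The paper itself does not prove this theorem --- it simply cites it to Burban, Drozd, and Gavran (\cite{Burban_2017}) --- so there is no in-paper argument to compare against. Your reconstruction is the standard one: use the Gabriel criterion (that $Q(f)$ is an isomorphism iff $\Ker f,\Coker f\in\mathcal{C}$) to show $Q(\eta_M)$ is invertible, exhibit $Q\circ F$ as a quasi-inverse of $\bar G$ via the 2-universal property of $Q$, use the triangle identities to split extensions for $\ImF(F)\subseteq\mathcal{C}^{\perp}$, and use the direct-summand argument for the reverse inclusion. The one place you gloss over is the descent of the natural isomorphism $Q\eta\colon Q\Rightarrow QFG$ to a natural isomorphism $\id_{\mathcal{A}/\mathcal{C}}\Rightarrow QF\bar G$: this is legitimate because $-\circ Q$ is fully faithful (the quotient being a localization), but it is worth stating that explicitly rather than waving at the ``2-categorical universal property.'' The dualization to the left-adjoint case is also fine.
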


A Serre subcategory $\mathcal{C}$ is called \emph{localizing} (resp.\ \emph{colocalizing})
if the \emph{quotient functor} $\mathcal{A} \to \mathcal{A} / \mathcal{C}$ has a right adjoint (resp.\ left adjoint). 
In this case, the quotient functor is called the \emph{localization} (resp.\ \emph{colocalization}) 
and the right adjoint (resp.\ left adjoint) the \emph{section} (resp.\ \emph{cosection}). 
We denote by $\loc$ and $\sect$ (resp.\ $\coloc$ and $\cosec$) the localization and the section 
(resp.\ the colocalization and the cosection) functors. 
The Serre subcategory is \emph{bilocalizing} if it is both localizing and colocalizing. 
Therefore, a bilocalizing subcategory has a natural adjoint triple $(\cosec, \loc, \sec)$ associated to it.

We are ready to prove one of our main results.

\begin{theorem}\label{Eqv1}
If $P$ is a continuous poset, then the following categories are equivalent: 
\begin{enumerate}[ i)]
  \item the quotient category $\Fun(\posetC, \Mod) / \Eph$; 
  \item the category of Scott sheaves $\Sh(P^{\sigma})$; 
  \item the full subcategory of upper semi-continuous modules $\Fun^c(\posetC, \Mod)$; 
  \item the full subcategory of lower semi-continuous modules $\Fun_c(\posetC, \Mod)$. 
\end{enumerate}
Moreover,
\begin{enumerate}
  \item[ a)] $\Eph$ is a bilocalizing subcategory of $\Fun(\posetC, \Mod)$; 
  \item[ b)] $\Fun^c(\posetC, \Mod) = \Eph^{\perp}$; 
  \item[ c)] $\Fun_c(\posetC, \Mod) = {}^{\perp}\Eph$. 
\end{enumerate}
\end{theorem}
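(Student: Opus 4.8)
The plan is to apply the abstract machinery of Theorem \ref{QuotientKernel} twice, once to the exact functor $j_* \colon \Fun(\posetC, \Mod) \to \Sh(P^{\sigma})$ with its right adjoint $j^!$, and once with the same $j_*$ together with its left adjoint $j^*$. By Theorem \ref{EphWayBelow}, $\Eph = \Ker(j_*)$, so this Serre subcategory is precisely the kernel appearing in that theorem. Proposition \ref{FullyFaithfulAdjointCombo} supplies everything the hypotheses demand: $j_*$ is exact, $(j^*, j_*)$ and $(j_*, j^!)$ are adjoint pairs, and $j_* j^* = \id = j_* j^!$, so the relevant unit $\id \to j_* j^*$ and counit $j_* j^! \to \id$ are isomorphisms. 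Hence Theorem \ref{QuotientKernel} i) gives that $j_*$ induces an equivalence $\Fun(\posetC, \Mod) / \Eph \simeq \Sh(P^{\sigma})$, establishing the equivalence of i) and ii), and simultaneously shows that $\Eph$ is both localizing and colocalizing — that is, bilocalizing, which is assertion a). The localization and section functors are identified with $j_*$ and $j^!$, and the colocalization and cosection with $j_*$ and $j^*$.

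For the perpendicular categories, Theorem \ref{QuotientKernel} ii) tells us that the essential image of the right adjoint $j^!$ is $\Eph^{\perp}$ and the essential image of the left adjoint $j^*$ is ${}^{\perp}\Eph$. But Proposition \ref{Prop:EssImageJ} already computes these essential images: $\ImF(j^!) = \Fun^c(\posetC, \Mod)$ and $\ImF(j^*) = \Fun_c(\posetC, \Mod)$. Combining the two identifications yields $\Fun^c(\posetC, \Mod) = \Eph^{\perp}$ and $\Fun_c(\posetC, \Mod) = {}^{\perp}\Eph$, which are assertions b) and c). Since $\Eph^{\perp}$ and ${}^{\perp}\Eph$ are each equivalent to $\Sh(P^{\sigma})$ by Theorem \ref{QuotientKernel} ii), the equivalences of ii) with iii) and iv) follow — these are of course also a direct restatement of Theorem \ref{EqvUpperSemiCont}, so the chain of equivalences i)–iv) is complete.

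I do not anticipate a genuine obstacle here: the theorem is essentially a bookkeeping exercise assembling Theorem \ref{EphWayBelow}, Proposition \ref{FullyFaithfulAdjointCombo}, Proposition \ref{Prop:EssImageJ}, and the cited Theorem \ref{QuotientKernel}. The one point that needs a word of care is that Theorem \ref{QuotientKernel} must be invoked in both its ``right adjoint'' and ``left adjoint'' forms, and one should check that the direction of the natural transformation ($G F \to \id$ versus $\id \to G F$) matches the form being used — for $(j_*, j^!)$ the counit $j_* j^! \to \id$ is the relevant one, while for $(j^*, j_*)$ it is the unit $\id \to j_* j^*$; both are isomorphisms by Proposition \ref{FullyFaithfulAdjointCombo} ii). It is also worth remarking explicitly that, as noted after the statement of bilocalizing subcategories, $\Eph$ then carries an adjoint triple $(\cosec, \loc, \sec) = (j^*, j_*, j^!)$ after identifying $\Fun(\posetC, \Mod)/\Eph$ with $\Sh(P^{\sigma})$.
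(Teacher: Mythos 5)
Your proposal is correct and follows essentially the same route as the paper: invoke Theorem~\ref{QuotientKernel} (in both its right- and left-adjoint forms) using $j_*$ with its adjoints $j^!$ and $j^*$, identify $\Ker(j_*) = \Eph$ via Theorem~\ref{EphWayBelow}, and read off the perpendicular categories from Proposition~\ref{Prop:EssImageJ}. The extra care you take about which natural transformation must be an isomorphism in each form of Theorem~\ref{QuotientKernel} is correct and matches what Proposition~\ref{FullyFaithfulAdjointCombo}~ii) supplies.
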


\begin{proof}
For the equivalence of i) and ii), observe first that by Proposition \ref{FullyFaithfulAdjointCombo}, the functor $j_*$ is exact and has the right adjoint $j^!$ and the left adjoint $j^*$ that satisfy $j_*j^! = \id$ and $j_*j^* = \id$. 
By considering either one of these adjoints, we now see by Theorem \ref{QuotientKernel} i) that the categories $\Fun(\posetC, \Mod) / \Ker(j_*)$ and $\Sh(P^{\sigma})$ are equivalent. 
But $\Ker(j_*) = \Eph$ by Theorem \ref{EphWayBelow} proving the equivalence of i) and ii). 
Theorem \ref{QuotientKernel} ii) implies the other equivalences, since the essential image of $j^!$ (resp.\ $j^*$) coincides with  $\Fun^c(\posetC, \Mod)$ (resp.\ $\Fun_c(\posetC, \Mod)$) by Proposition \ref{Prop:EssImageJ}. 
Note that the equivalences between ii), iii), and iv) in fact have already been proven in Theorem \ref{EqvUpperSemiCont}. 
The last three claims are now evident. 
\end{proof}

\begin{remark}\label{Rem:recolle}
As a localizing subcategory, 
$\Eph$ has a hereditary torsion theory associated to it (\cite[Lemma 2.1]{Krause1997}).
The corresponding torsion objects are the ephemeral modules. 
By definition, for all persistence modules $M$ over $P$, $\soc^{\sigma}(M)$ is the kernel of the canonical morphism $M \to \underline{M}$. 
It easily follows that $\soc^{\sigma}(M)$ is the largest ephemeral subobject of $M$. 
Therefore, the Scott-socle coincides with the torsion functor. 
Note that the morphism $M \to \underline{M}$ is the unit of the adjuction $(j_*,j^!)$ (see Lemma \ref{jConnectionCont}). 
In turn, $\topf^{\sigma}(M)$ is the largest ephemeral quotient module of $M$. 
In other words, $\Rad^{\sigma}(M)$ is the smallest subobject of $M$ such that the corresponding quotient $\topf^{\sigma}(M) = M / \Rad^{\sigma}(M)$ is ephemeral. 
\end{remark}

\begin{remark}\label{Rem:recolle2}
It is not difficult to see that $(\topf^{\sigma}, \inc, \soc^{\sigma})$ is an adjoint triple.
By Proposition \ref{FullyFaithfulAdjointCombo}, the functors $j^*$ and $j^!$ are fully faithful, and by Theorem \ref{EphWayBelow}, $\ImF \inc = \Eph = \Ker j_*$. 
Thus, we now have a so-called \emph{recollement} 
\[
\begin{tikzcd}
  \Eph \arrow[r,"{\inc}" description] & 
  \Fun(\posetC, \Mod) \arrow[r,"{j_*}" description] \arrow[l,"{\soc}" description, bend left] \arrow[l,"{\topf}" description, bend right] & 
  \Sh(P^{\sigma}) \arrow[l,"{j^*}" description, bend right] \arrow[l,"{j^!}" description, bend left] 
\end{tikzcd}
\]
If we replace the category $\Sh(P^{\sigma})$ by the equivalent category $\Fun^c(\posetC, \Mod)$ (resp.\ $\Fun_c(\posetC, \Mod)$), 
then the adjoint triple $(j^*, j_*, j^!)$ changes to $(\overline{(-)}, \, \underline{(-)}, \inc)$ 
(resp.\ $(\inc, \overline{(-)}, \, \underline{(-)})$) (see Theorem \ref{Eqv1}). 
More on recollements can be found, for example, in \cite{recollements}.
\end{remark}

We can also characterize the indecomposable injective upper semi-continuous modules in the case of $\Rset^n$.

\begin{proposition}\label{Prop:IndecomInjScottSobr2}
Let $k$ be a field. 
In the abelian category of upper semi-continuous modules over $\Rset^n$, 
the indecomposable injective objects are in a one-to-one correspondence with the upward-directed down-sets $D \subseteq \Rset^n$ that are open with respect to the standard topology.  
\end{proposition}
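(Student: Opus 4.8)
The plan is to combine Theorem~\ref{Eqv1} with the torsion-theoretic picture of Remark~\ref{Rem:recolle}, reducing the statement to a classification of indecomposable injective persistence modules over $\Rset^n$. First I would observe that $j_*$ is exact and that its left adjoint $j^*$, an inverse image functor of sheaves, is also exact, so $j_*$ preserves injectives; likewise $j^!$, being the right adjoint of the exact functor $j_*$, preserves injectives. Since $j^!j_* = \underline{(-)}$ by Lemma~\ref{jConnectionCont}, $j_*j^! = \id$ by Proposition~\ref{FullyFaithfulAdjointCombo}, and $j^!$ identifies $\Sh(P^{\sigma})$ with $\Fun^c(\posetC,\Mod)$, it follows that the injective objects of $\Fun^c(\posetC,\Mod)$ are exactly the modules $\underline{J} = j^!j_*J$ with $J$ injective in $\Fun(\posetC,\Mod)$. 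Moreover, for such a $J$ one has $R^1\soc^{\sigma}(J)_p = \Ext^1(k[U_p]/k[\Int U_p], J) = 0$ by the proof of Proposition~\ref{Prop:HereditarySoc}, so the exact sequence there shows $J \to \underline{J}$ is an epimorphism with kernel $\soc^{\sigma}(J)$; consequently the injective objects of $\Fun^c(\posetC,\Mod)$ are precisely the persistence modules $J$ that are injective in $\Fun(\posetC,\Mod)$ and satisfy $\soc^{\sigma}(J) = 0$. As $\Fun^c(\posetC,\Mod)$ is a full subcategory of $\Fun(\posetC,\Mod)$ closed under direct summands, indecomposability means the same in both categories, so everything reduces to identifying the indecomposable injective persistence modules over $\Rset^n$ whose Scott-socle vanishes.

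The key input is that the indecomposable injective persistence modules over $\Rset^n$ are exactly the indicator modules $k[D]$ with $D$ a (nonempty) upward-directed down-set. Indecomposability of $k[D]$ is immediate: an upward-directed set is connected, so the endomorphism ring of $k[D]$ is the field $k$, which is local. For injectivity, the basic down-set modules $k[D_{\bm p}]$ are injective, being the value at $k$ of the right adjoint of the exact evaluation functor at $\bm p$; for general upward-directed $D$ one uses Lemma~\ref{Lemma:SheafCosheaf}~i) and the Alexandrov equivalence: for any up-set $V \subseteq P$ one gets $\Hom(k[V], k[D]) = \varprojlim_{x \in V} k[D]_x$, which is $k$ if $V \cap D \neq \emptyset$ and $0$ otherwise, because $V \cap D$ is again upward-directed — if $x, y \in V \cap D$, choose $z \in D$ with $z \ge x, y$, and then $z \in V$ since $V$ is an up-set. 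Feeding this computation into a Baer-type criterion applied to subobjects of a generator $\bigoplus_{\bm q} k[U_{\bm q}]$, together with a Godement-type embedding $J \hookrightarrow \prod_{\bm p} k[D_{\bm p}]^{(\dim_k J_{\bm p})}$ of an arbitrary injective and an analysis of the indecomposable summands of such products, yields both the injectivity of the $k[D]$ and the exhaustiveness of the list. This exhaustiveness — that every indecomposable injective persistence module over $\Rset^n$ has the form $k[D]$ for an upward-directed down-set $D$ — is the step I expect to be the main obstacle; the remaining steps are formal.

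Granting the classification, the matching is immediate. By Example~\ref{Ex:RadSocTopUpDown}~iv) we have $\soc^{\sigma}(k[D]) = k[D \setminus {\twoheaddownarrow} D]$, and for $P = \Rset^n$ the set ${\twoheaddownarrow} D$ equals the interior of $D$ in the standard topology, as recorded in the remark following Lemma~\ref{Fact:IntU}; hence $\soc^{\sigma}(k[D]) = 0$ if and only if $D$ coincides with its standard-topology interior, i.e.\ if and only if $D$ is open in the standard topology. Therefore the indecomposable injective objects of the category of upper semi-continuous modules over $\Rset^n$ are exactly the modules $k[D]$ — equivalently the Scott sheaves $j_*k[D]$ — with $D$ a nonempty upward-directed down-set that is open in the standard topology. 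These are pairwise non-isomorphic because $D = \supp k[D]$ is read off from the module (alternatively, $j^!j_*k[D] = \underline{k[D]} = k[D]$ for open $D$ by Proposition~\ref{prop:IntervalIntClosure}~iv)), which establishes the asserted one-to-one correspondence.
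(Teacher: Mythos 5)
Your skeleton matches the paper's: reduce to classifying indecomposable injective persistence modules over $\Rset^n$ that happen to be upper semi-continuous, then identify which indicator modules $k[D]$ have vanishing Scott-socle. But the load-bearing step — that every indecomposable injective persistence module over $\Rset^n$ is of the form $k[D]$ for a nonempty upward-directed down-set $D$ — is exactly what you flag as ``the step I expect to be the main obstacle,'' and your sketch (Baer criterion, Godement-type embedding, analysis of summands) does not in fact establish it. This is a standalone theorem of H\"oppner, which the paper simply cites (\cite[Proposition~1.1]{Hoppner}). Without that input your argument is incomplete; you should cite the result rather than attempt to rederive it in a few lines.

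Your reduction step is genuinely different from the paper's and worth a remark. The paper invokes Krause's general fact (\cite[Corollary~2.2.15]{KrauseHomTheoryRep}) that the injectives of a right perpendicular category $\Eph^{\perp}$ are precisely the ambient injectives lying in $\Eph^{\perp}$, together with Theorem~\ref{Eqv1}~b) identifying $\Fun^c(\posetC,\Mod)$ with $\Eph^{\perp}$. You instead argue through the adjunction formulas $j_*j^! = \id$, $j^!j_* = \underline{(-)}$, preservation of injectives, and the exact sequence of Proposition~\ref{Prop:HereditarySoc}. This route can be made to work, but the word ``consequently'' in your step from ``injectives of $\Fun^c$ are the $\underline{J}$ with $J$ injective'' to ``injectives of $\Fun^c$ are the injective $J$ with $\soc^{\sigma}(J)=0$'' hides a real claim: that $\underline{J}=J/\soc^{\sigma}(J)$ is itself injective in $\Fun(\posetC,\Mod)$ whenever $J$ is. That does hold — the torsion theory attached to the localizing subcategory $\Eph$ (Remark~\ref{Rem:recolle}) is hereditary, and for hereditary torsion theories the torsion-free quotient of an injective is injective — but you should say so. Your closing computation via Example~\ref{Ex:RadSocTopUpDown}~iv) and the identity $\twoheaddownarrow D = \Int^s D$ is correct and is the same content as the paper's appeal to Theorem~\ref{Thm:IntervalIntClosure2}~iv).
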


\begin{proof}
Recall first that $\Fun^{c}(\posetC, \Mod) = \Eph^{\perp}$ by Theorem \ref{Eqv1} b). 
We know by \cite[Corollary 2.2.15]{KrauseHomTheoryRep} that the injective objects of $\Eph^{\perp}$ are precisely the injective persistence modules that are in $\Eph^{\perp}$. 
Similarly, the indecomposable objects of $\Eph^{\perp}$ coincide with the indecomposable persistence modules that are in  $\Eph^{\perp}$, since $\Eph^{\perp}$ is closed under direct summands. 
Therefore, we have to consider the persistence modules that are upper semi-continuous, injective, and indecomposable. 
By a result of Höppner \cite[Proposition 1.1]{Hoppner}, 
the indecomposable injective persistence modules are in a one-to-one correspondence with the upward-directed down-sets $D \subseteq \Rset^n$. 
By Theorem \ref{Thm:IntervalIntClosure2} iv), a down-set $k[D]$ is upper semi-continuous if and only if $D$ is open with respect to the standard topology. 
Thus, the claim follows. 
\end{proof}

\section{Interleavings}\label{se:Interleavings}

\subsection{Interleaving with a superlinear family}

A general theory for interleavings has been presented in \cite{Bubenik_2014}. 
Let $P$ be a poset. 
An order-preserving map $T \colon P \to P$ is a \emph{translation} on $P$, if $p \le T(p)$ for all $p \in P$. 
We use the notation $\Trans_P$ for the set of all translations on $P$. 
This set comes with a natural partial order given by 
\[
  T_1 \le T_2 \iff T_1(p) \le T_2(p) \text{ for all } p \in P \qquad (T_1,T_2\in \Trans_P). 
\]
In particular, $\id \le T$ for all $T \in \Trans_P$. 
A \emph{family of translations} is a function $T_{\bullet} \colon \left[0, \infty \right[ \to \Trans_P$. 
It is called \emph{superlinear}, if $T_{\varepsilon} \circ T_{\delta} \le T_{\varepsilon + \delta}$ for all $\varepsilon, \delta \ge 0$.

\begin{example}\label{stdSuperLinFam}
Let $\bm{v} \in \Rset^n$ and $\varepsilon \ge 0$. 
Consider the function 
$T^{\bm{v}}_{\varepsilon} \colon \Rset^n \to \Rset^n, T^{\bm{v}}_{\varepsilon}(\bm{x}) = \bm{x} + \varepsilon \bm{v}$. 
The family $T^{\bm{v}}_{\bullet} = (T^{\bm{v}}_{\varepsilon})_{\varepsilon \ge 0}$ 
is then superlinear for all $\bm{v} \ge \bm{0}$. 
The family corresponding to the point $\bm{v} = (1, \dots, 1)$ is called the \emph{standard superlinear family}.  
Note that $T^{\bm{v}}_{\varepsilon}$ is Scott-continuous for all $\bm{v} \in \Rset^n$ and $\varepsilon \ge 0$. 
\end{example}

Given a persistence module $M \colon \posetC \to \Mod$ and a translation $T \colon P \to P$, 
we write $M T$ for their composition as functors. 
Therefore, $MT$ is also a persistence module $\posetC \to \Mod$ with $(MT)_p = M_{T(p)}$ for all $p \in P$. 
Since $T$ is a translation, there exists a natural morphism $M \to MT$. 

Let $T_{\bullet}$ be a superlinear family of translations on $P$. 
Following \cite[Definition 2.5.1]{Bubenik_2014} 
we say that persistence modules $M$ and $N$ are $\varepsilon$-\emph{interleaved} (\emph{with respect to} $T_{\bullet}$)
if there exist morphisms $f \colon M \to NT_{\varepsilon}$ and $g \colon N \to MT_{\varepsilon}$ 
such that diagram 
\[
\begin{tikzcd}
  M \arrow{r} \arrow{rd} & 
    M T_{\varepsilon} \arrow{r} \arrow{rd} & 
    M T_{\varepsilon} T_{\varepsilon} \\
  N \arrow{r} \arrow{ru} & 
    N T_{\varepsilon} \arrow{r} \arrow{ru} & 
    N T_{\varepsilon} T_{\varepsilon} 
\end{tikzcd}
\]
commutes. 
The \emph{interleaving distance} of $M$ and $N$ is the infimum 
\[
  d^{T}(M, N) 
  = \inf \{ \varepsilon \mid M \text{ and } N \text{ are } \varepsilon \text{-interleaved} \}. 
\]
If $M$ and $N$ are not interleaved for any $\varepsilon$, 
we set $d^{T}(M, N) = \infty$. 
We will omit the superindex if the superlinear family is obvious from the context. 
The interleaving distance is an extended pseudometric 
on the category of persistence modules by \cite[Theorem 2.5.3]{Bubenik_2014}. 

This is the first half of our needs. 
We still need a metric for the category $\Sh(P^{\sigma})$. 
We want to consider the composition $MT$ as the sheaf theoretic inverse image $T^* M$, 
where $M$ is a persistence module and $T$ a translation. 
In order to extend the interleaving distance to Scott sheaves, we must add the assumption that translations are Scott-continuous. 
This idea can be generalized even further by utilizing the so-called spesialization order. 
Let $X$ be a topological space. 
Recall that the \emph{specialization order} is a pre-order on the set $X$ defined by 
\[
  x \le_s y \quad \iff \quad x \in \overline{\{ y \}} \qquad (x,y \in X). 
\]
For more details, see \cite[Subsection 4.2.1]{goubault-larrecq_2013}. 
Now we can define translations for any topological space.

\begin{definition}
Let $X$ be a topological space. 
A continuous map $T \colon X \to X$ is a \emph{translation} if $x \le_s T(x)$ for all $x \in X$. 
We write $\Trans_X$ for the set of all translations on $X$. 
\end{definition}

\begin{proposition}\label{translationOpen}
Let $X$ be a topological space. 
A continuous map $T \colon X \to X$ is a translation if and only if $U \subseteq T^{-1}(U)$ for all open sets $U \subseteq X$. 
In other words, the preimage map $T^{-1} \colon \Open(X) \to \Open(X)$ is a translation, 
where $\Open(X)$ is the poset of open sets ordered by inclusion. 
\end{proposition}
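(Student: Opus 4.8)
The plan is to unwind the definition of the specialization order into a statement about open neighbourhoods, after which both implications become immediate. First I would record the standard reformulation: for points $x,y \in X$, one has $x \le_s y$ if and only if every open set $U$ containing $x$ also contains $y$. This is just the fact that $x \in \overline{\{y\}}$ precisely when every open neighbourhood of $x$ meets $\{y\}$. Applying this with $y = T(x)$, the translation condition "$x \le_s T(x)$ for all $x$" becomes: for every open $U$ and every $x \in U$, we have $T(x) \in U$, i.e.\ $x \in T^{-1}(U)$.

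From this reformulation both directions follow at once. If $T$ is a translation, then for any open $U$ and any $x \in U$ we get $x \in T^{-1}(U)$, so $U \subseteq T^{-1}(U)$. Conversely, suppose $U \subseteq T^{-1}(U)$ for every open $U$. Fix $x \in X$ and let $U$ be any open neighbourhood of $x$; then $x \in U \subseteq T^{-1}(U)$, hence $T(x) \in U$. Since this holds for all such $U$, we conclude $x \in \overline{\{T(x)\}}$, that is $x \le_s T(x)$, so $T$ is a translation.

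For the final "in other words" clause, I would note that continuity of $T$ makes $T^{-1}$ a well-defined endomap of the poset $\Open(X)$, and it is order-preserving because preimages respect inclusions. The condition just established, $U \subseteq T^{-1}(U)$ for all open $U$, is exactly the requirement $\id \le T^{-1}$ in $\Trans_{\Open(X)}$; together with order-preservation this says precisely that $T^{-1}$ is a translation on $\Open(X)$.

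I do not expect any real obstacle here. The only point requiring care is to use the neighbourhood characterisation of $\le_s$ rather than its closure form, and to remember that the notion of translation on a general poset includes order-preservation — which for $T^{-1}$ is automatic from functoriality of the preimage.
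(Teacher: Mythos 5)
Your proof is correct and follows essentially the same route as the paper: both reformulate the specialization order as "$x\le_s y$ iff every open neighbourhood of $x$ contains $y$" and then read off both implications directly. You also explicitly treat the "in other words" clause, which the paper leaves implicit.
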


\begin{proof}
Note first that $x \le_s y$ if and only if $y$ is contained in all open neighbourhoods of $x$. 
Suppose now that $T$ is a translation, and let $U \subseteq X$ be an open set. 
If $x \in U$ then $T(x) \in U$, since $x \le_s T(x)$. 
Therefore, $x \in T^{-1}(U)$, which proves the inclusion $U \subseteq T^{-1}(U)$. 
Conversely, suppose that $U \subseteq T^{-1}(U)$ for all open sets $U \subseteq X$ and let $x \in X$. 
So, for all open neighbourhoods $U$ of $x$, we have $x \in T^{-1}(U)$, and hence, $T(x) \in U$. 
This shows that $x \le_s T(x)$. 
\end{proof}

\begin{proposition}
Let $P$ be a poset. 
An order-preserving map $T \colon P \to P$ is a translation
if and only if the corresponding map $T^a \colon P^a \to P^a$ is a translation. 
In other words, $\Trans_P = \Trans_{P^a}$. 
\end{proposition}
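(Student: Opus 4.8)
The plan is to reduce everything to the dictionary between order-preserving maps and continuous maps of the Alexandrov space, together with a computation of the specialization order of $P^a$. First I would recall that, by the cited fact \cite[Lemma 2.4]{Bubenik_2021}, an order-preserving self-map $T \colon P \to P$ is the same data as a continuous self-map $T^a \colon P^a \to P^a$; hence the underlying maps occurring in $\Trans_P$ and in $\Trans_{P^a}$ range over exactly the same set, namely the order-preserving self-maps of $P$. It therefore remains only to match the two extra requirements: $p \le T(p)$ for all $p \in P$ on the poset side, and $x \le_s T^a(x)$ for all $x$ on the topological side, where $\le_s$ denotes the specialization pre-order of $P^a$.

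The key step is to observe that the specialization order of $P^a$ is just the original partial order $\le$. Indeed, the closed sets of $P^a$ are precisely the down-sets, so $\overline{\{y\}}$ is the smallest down-set containing $y$, that is $D_y = {\downarrow} y$; thus $x \in \overline{\{y\}}$ if and only if $x \le y$, i.e.\ $\le_s\ =\ \le$. With this identification the condition $x \le_s T^a(x)$ for all $x$ becomes literally $x \le T(x)$ for all $x$, which is exactly the definition of $T$ being a translation on $P$. Combined with the first paragraph this already proves $\Trans_P = \Trans_{P^a}$. Alternatively, and perhaps even more directly given what is available, one may invoke Proposition \ref{translationOpen}: $T^a$ is a translation iff $U \subseteq (T^a)^{-1}(U)$ for every open (hence up-set) $U \subseteq P$; if $p \le T(p)$ always, then $x \in U$ forces $T(x) \ge x$, so $T(x) \in U$ and $x \in T^{-1}(U)$, while conversely applying $U_p \subseteq T^{-1}(U_p)$ to $p \in U_p$ gives $T(p) \in U_p$, i.e.\ $p \le T(p)$.

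There is essentially no obstacle here; the only point requiring a moment's care is that ``$T$ is a translation on $P$'' silently includes order-preservation whereas ``$T^a$ is a translation on $P^a$'' silently includes continuity, and the cited lemma is precisely what guarantees these two silent hypotheses agree, after which the positivity conditions coincide by the specialization-order computation. The proposition is thus a bookkeeping observation that legitimizes writing $\Trans_P$ and $\Trans_{P^a}$ interchangeably in the sequel.
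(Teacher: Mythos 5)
Your argument is correct, and in fact the paper dismisses this proposition with the single line ``This is obvious.'' Your two-step reasoning --- that order-preserving self-maps and continuous self-maps of $P^a$ coincide, and that the specialization order of $P^a$ recovers $\le$ because $\overline{\{y\}} = D_y$ --- is exactly the intended ``obvious'' argument, so there is nothing to add beyond noting that you have simply made it explicit.
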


\begin{proof}
This is obvious. 
\end{proof}

\begin{proposition}\label{Prop:ScottTranslation}
Let $P$ be a continuous poset. 
A Scott-continuous map $T \colon P \to P$ is a translation
if and only if the corresponding map $T^{\sigma} \colon P^{\sigma} \to P^{\sigma}$ is a translation. 
In particular, $\Trans_{P^{\sigma}} \subseteq \Trans_{P^a}$. 
\end{proposition}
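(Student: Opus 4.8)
The plan is to reduce both notions of translation to the single order-theoretic inequality $p \le T(p)$ for all $p \in P$, by identifying the specialization order of $P^{\sigma}$ with the original partial order $\le$.

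First I would dispose of the continuity bookkeeping. Since $T$ is Scott-continuous, it is in particular order-preserving (this is built into the definition of Scott-continuity), and the associated map $T^{\sigma} \colon P^{\sigma} \to P^{\sigma}$ is continuous by \cite[Proposition 4.3.5]{goubault-larrecq_2013}. Hence the continuity requirement in the definition of a translation on the topological space $P^{\sigma}$ holds automatically, and also the order-preservation requirement in the definition of a translation on the poset $P$ holds automatically. What is left to compare is the condition ``$p \le T(p)$ for all $p$'' (translation on the poset $P$) against the condition ``$x \le_s T^{\sigma}(x)$ for all $x$'' (translation on the space $P^{\sigma}$).

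The key step is then to compute the specialization order of $P^{\sigma}$. By Remark \ref{Rmk:DpScottClosed}, every basic down-set $D_y$ is Scott-closed, while any Scott-closed set is a down-set; therefore the smallest Scott-closed set containing $y$ is exactly $D_y$, i.e.\ $\overline{\{y\}} = D_y$ in $P^{\sigma}$. Consequently $x \le_s y$ in $P^{\sigma}$ if and only if $x \in D_y$, i.e.\ if and only if $x \le y$. So ``$x \le_s T^{\sigma}(x)$ for all $x$'' is literally the statement ``$x \le T(x)$ for all $x$'', and the desired equivalence follows. The only genuinely substantive ingredient here is this identification of the specialization order, which rests entirely on the description of Scott-closed sets as the down-sets closed under directed suprema; I expect this to be the one place a reader could stumble, so I would spell it out explicitly.

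Finally, for the inclusion $\Trans_{P^{\sigma}} \subseteq \Trans_{P^a}$: given $T \in \Trans_{P^{\sigma}}$, continuity of $T$ with respect to the Scott topology means, again by \cite[Proposition 4.3.5]{goubault-larrecq_2013}, that $T$ is Scott-continuous as a map of posets, hence order-preserving; and the specialization-order computation above shows $p \le T(p)$ for all $p$. Thus $T$ is a translation on the poset $P$, and by the preceding proposition (which gives $\Trans_P = \Trans_{P^a}$) we conclude $T \in \Trans_{P^a}$, as claimed.
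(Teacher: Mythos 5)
Your proof is correct and follows essentially the same route as the paper: both reduce the claim to the observation that the specialization order of $P^{\sigma}$ coincides with the original order, via the identification $\overline{\{y\}} = D_y$. The only difference is that you derive this identification from Remark~\ref{Rmk:DpScottClosed} rather than citing it directly, which is a perfectly reasonable way to make the step self-contained.
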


\begin{proof}
It is well known that the closure of a point $x \in P^{\sigma}$ with respect to the Scott topology is the basic down-set $D_x$
(see \cite[Lemma 4.2.7]{goubault-larrecq_2013}). 
Thus, the original partial order and the specialization order are exactly the same relation. 
\end{proof}

Let $X$ be a topological space. 
The set $\Trans_X$ has a natural pre-order given by 
\[
  T_1 \le T_2 \iff T_1(x) \le_s T_2(x) \text{ for all } x \in X \qquad (T_1,T_2\in \Trans_X). 
\]
A \emph{family of translations} is a function $T_{\bullet} \colon \left[0, \infty \right[ \to \Trans_X$. 
It is \emph{superlinear}, if $T_{\varepsilon} \circ T_{\delta} \le T_{\varepsilon + \delta}$ for all $\varepsilon, \delta \ge 0$.

\begin{definition}\label{ScottInterlea}
Let $X$ be a topological space, and let $T_{\bullet} \colon \left[0, \infty \right[ \to \Trans_X$ be a superlinear family. 
Sheaves $\mathcal{F}$ and $\mathcal{G}$ on $X$ are \emph{$\varepsilon$-interleaved} (with respect to $T_{\bullet}$) 
if there exist morphisms $\varphi \colon  \mathcal{F} \to T_{\varepsilon}^* \mathcal{G}$ 
and $\psi \colon  \mathcal{G} \to T_{\varepsilon}^* \mathcal{F}$ such that diagram, 
\begin{equation}\label{eq:Interleaving^*}
\begin{tikzcd}
  \mathcal{F} \arrow{r} \arrow{rd} & 
    T^*_{\varepsilon} \mathcal{F} \arrow{r} \arrow{rd} & 
    T^*_{\varepsilon} T^*_{\varepsilon} \mathcal{F} \\
  \mathcal{G} \arrow{r} \arrow{ru} & 
    T^*_{\varepsilon} \mathcal{G} \arrow{r} \arrow{ru} & 
    T^*_{\varepsilon} T^*_{\varepsilon} \mathcal{G}
\end{tikzcd}
\end{equation}
with natural horizontal maps, commutes. 
The \emph{interleaving distance} of $\mathcal{F}$ and $\mathcal{G}$ is the infimum 
\[
  d^{T}(\mathcal{F}, \mathcal{G}) 
  = \inf \{ \varepsilon \mid \mathcal{F} \text{ and } \mathcal{G} \text{ are } \varepsilon \text{-interleaved} \}. 
\]
If $\mathcal{F}$ and $\mathcal{G}$ are not interleaved for any $\varepsilon$, 
we set $d^{T}(\mathcal{F}, \mathcal{G}) = \infty$. 
\end{definition}

\begin{remark}
The interleaving distance is an extended pseudometric on $\Sh(X)$. 
We omit the proof, since it is done just like for persistence modules (see \cite[Theorem 2.5.3]{Bubenik_2014}). 
Note that $d^T(M, N) = d^T(M^{\dagger}, N^{\dagger})$ for persistence modules $M$ and $N$ on a poset $P$, 
and a superlinear family $T_{\bullet}$ on $P$. 
\end{remark}

\begin{remark}
At first sight, 
Berkouk's and Petit's definition of the interleaving distance, given in \cite[Definition 4.7]{Berkouk_2021} 
seems to differ from our Definition \ref{ScottInterlea}. 
They considered the poset $\Rset^n$ with the partial order defined by a cone $\gamma$, 
and computed all the interleaving distances with respect to the superlinear family $T^{\bm{v}}_{\bullet}$, 
defined in the Example \ref{stdSuperLinFam}. 
However, our definition coincides with their definition. 
To see this, note first that every translation in this superlinear family is a homeomorphism. 
Hence, the inverse image $T^{\bm{v}*}_{\varepsilon}$ 
and the direct image $T^{\bm{v}}_{\varepsilon*}$ are inverses of each other. 
Thus, an application of $T^{\bm{v}}_{\varepsilon*}$ twice to diagram (\ref{eq:Interleaving^*}) 
gives the diagram used in \cite[Definition 4.6]{Berkouk_2021}. 
Conversely, by applying $T^{\bm{v}*}_{\varepsilon}$ twice to that diagram, we return to our diagram. 
\end{remark}

\subsection{Comparing Interleavings on Alexandrov and Scott Topologies}

Let $P$ be a continuous poset. 
For the rest of this article 
we fix a superlinear family of translations $T_{\bullet} \colon \left[0, \infty \right[ \to \Trans_{P^{\sigma}}$. 
Thus, every $T_{\varepsilon}$ must be a Scott-continuous translation. 
By Proposition \ref{Prop:ScottTranslation}, we can think of $T_{\bullet}$ as a family of translations $\left[0, \infty \right[ \to \Trans_{P^{a}} = \Trans_P$. 
For example, in the case $P = \Rset^n$ the family $T_{\bullet}$ could be the standard superlinear family. 
For brevity, 
we use  the notations $d_a$ and $d_{\sigma}$ for the interleaving distances of persistence modules and Scott sheaves, 
respectively. 

For the sequel, we need to put more restrictions on our superlinear family. 
We would like a persistence module $M$ over $P$ to be ephemeral if and only if $d_a(M, 0) = 0$. 
This equivalence does not hold for all superlinear families. 
For example, look at the family $T'_{\bullet} = (\id)_{\varepsilon \ge 0}$. 
For the convenience of our readers, we have collected here all the conditions we will need in the following: 
\begin{enumerate}[\quad TR1:] 
  \item $T_{\varepsilon}$ is an order-isomorphism for all $\varepsilon \ge 0$; 
  \item $T_{\varepsilon}$ is a strong translation for all $\varepsilon > 0$; 
  \item For every $p \ll q$ there exist $\varepsilon > 0$ such that $p \le T_{\varepsilon}(p) \le q$. 
\end{enumerate}
Here an order-preserving map $T \colon P \to P$ is a \emph{strong translation} if $x \ll T(x)$ for all $x \in P$. 

We need condition TR1 for the following lemma.

\begin{lemma}\label{TranslaatioKommutoi}
Let $P$ be a continuous poset. 
If $T \colon P \to P$ is a Scott-continuous order-isomorphism,  
then 
\begin{enumerate}[ i)]
  \item $j^! T^* \mathcal{F} = T^* j^! \mathcal{F}$; 
  \item $j_* T^* M = T^* j_* M$; 
  \item $j^* T^* \mathcal{F} = T^* j^* \mathcal{F}$ 
\end{enumerate}
for all $M \in \Fun(\posetC, \Mod)$ and $\mathcal{F} \in \Sh(P^{\sigma})$. 
\end{lemma}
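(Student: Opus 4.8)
The plan is to deduce all three identities from one observation: $j$ commutes with $T$ as a map of spaces, and an order-isomorphism is a homeomorphism for \emph{both} the Alexandrov and the Scott topology. As preliminaries I would record the following. Since $T$ is an order-isomorphism, both $T$ and $T^{-1}$ are order-preserving, so $T^a\colon P^a\to P^a$ is a homeomorphism; and since $T$ is Scott-continuous while an order-isomorphism carries existing directed suprema to directed suprema in both directions, $T^{-1}$ is Scott-continuous as well, so $T^{\sigma}\colon P^{\sigma}\to P^{\sigma}$ is a homeomorphism too. Because $j$ is the identity on underlying sets, the square $j\circ T^a = T^{\sigma}\circ j$ of continuous maps $P^a\to P^{\sigma}$ commutes. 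I would also note the elementary facts $T(U_p)=U_{T(p)}$ (as $T$ is an order-isomorphism) and $T(\Int V)=\Int T(V)$ (as $T^{\sigma}$ is a homeomorphism), hence $T(\Int U_p)=\Int U_{T(p)}$, and $T$ preserves the relation $\ll$ (using ${\twoheaduparrow}q=\Int U_q$ from Lemma \ref{Fact:IntU}). Throughout I use the paper's identification of $T^*M=MT$ with the sheaf-theoretic inverse image along $T^a$, so that $T^*$ is genuinely $(-)^*$ in each of the two sheaf categories.

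For iii) I would simply apply the inverse-image functor to the commuting square, using contravariant functoriality of pullback: $(T^a)^*\circ j^* = (j\circ T^a)^* = (T^{\sigma}\circ j)^* = j^*\circ(T^{\sigma})^*$, which is exactly $T^*j^*\mathcal F = j^*T^*\mathcal F$. This step needs only that $T$ is order-preserving and Scott-continuous, not that it is an isomorphism.

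For ii) I would invoke that, on a homeomorphism, the inverse image $T^*$ and the direct image $T_*$ (along $T$) are mutually inverse equivalences on both $\Sh(P^a)$ and $\Sh(P^{\sigma})$. Applying the direct-image functor to the commuting square gives $j_*\circ T_* = T_*\circ j_*$; composing with $T^*$ on each side and cancelling the mutually inverse pairs $T_*,T^*$ yields $j_*\circ T^* = T^*\circ j_*$, which is ii). Then, for i), I would dualize ii): since $j_*\dashv j^!$ by Proposition \ref{FullyFaithfulAdjointCombo} and $T^*\dashv T_*$, taking right adjoints in $j_*\circ T^* = T^*\circ j_*$ gives $j^!\circ T_* = T_*\circ j^!$, and composing once more with $T^*$ and cancelling produces $j^!\circ T^* = T^*\circ j^!$.

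There is no substantive obstacle here; the statement is essentially formal, and the only thing requiring care is keeping track of the six functors involved and of the fact that for a homeomorphism the direct and inverse image are mutually inverse. As a more hands-on alternative, in the spirit of the rest of the paper, each identity can be checked directly: evaluate both sides on the basis $\{\Int U_p\}_{p\in P}$ of the Scott topology (for ii)) or on stalks (for i) and iii)), using the formulas $(j_*M)(\Int U_p)=M^{\dagger}(\Int U_p)$, $(j^!\mathcal F)_p=\mathcal F(\Int U_p)$, $(j^*\mathcal F)_p=\mathcal F_p$, $(T^*\mathcal F)(V)=\mathcal F(T(V))$ and $(T^*M)_p=M_{T(p)}$, together with $T(\Int U_p)=\Int U_{T(p)}$ and the induced isomorphism ${\twoheaduparrow}p\xrightarrow{\ \sim\ }{\twoheaduparrow}T(p)$ of the relevant index posets.
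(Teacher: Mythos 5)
Your proof is correct, and the formal route you take for i) and ii) is genuinely different from the paper's. The paper verifies ii) by evaluating both sides directly on Scott-open sets $U$ (using $(T^*M)(U) = M(T(U))$), and verifies i) by evaluating on points $p$ (using $(j^!\mathcal F)_p = \mathcal F(\Int U_p)$ and $T(\Int U_p)=\Int U_{T(p)}$); only its proof of iii) uses the contravariant functoriality $(T\circ j)^* = j^*\circ T^*$ exactly as you do. Your main argument instead extracts ii) from the commuting square $j\circ T^a = T^{\sigma}\circ j$ by covariant functoriality of direct image together with the fact that for a homeomorphism $T_*$ and $T^*$ are mutually inverse, and then deduces i) by passing to right adjoints across $j_*\dashv j^!$ and $T^*\dashv T_*$. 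What the abstract route buys is clear bookkeeping and a single organizing principle (one commuting square of spaces governs all three identities), at the cost of invoking the adjoint machinery set up in Proposition \ref{FullyFaithfulAdjointCombo}; the paper's computational route is more self-contained and makes the role of the identity $T(\Int U_p)=\Int U_{T(p)}$ visible. Your closing ``hands-on alternative'' is essentially the paper's own proof, so you have in effect covered both. One small remark: since an order-isomorphism automatically carries directed suprema to directed suprema (as you observe), Scott-continuity of $T$ is in fact a consequence of $T$ being an order-isomorphism rather than an independent hypothesis; this is harmless but worth noting. The auxiliary fact you record that $T$ preserves $\ll$ is not actually used in your argument.
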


\begin{proof} 
Note first that $T^{-1}$ is Scott-continuous, since 
\[
  T^{-1}(\sup D) = T^{-1}(\sup T T^{-1}D) = T^{-1}T(\sup T^{-1}D) = \sup (T^{-1} D). 
\]
for all directed sets $D \subseteq P$. 
In particular, $T$ is a homeomorphism, 
and we have 
\[
  (T^{-1})_* = T^* \quad \text{and} \quad (T^{-1})^* = T_*. 
\]
Now 
\begin{enumerate}[i) ]
  \item
    $(j^! T^* \mathcal{F})_p 
    = (T^* \mathcal{F})(\Int U_p) 
    = \mathcal{F}(T(\Int U_p)) 
    = \mathcal{F}(\Int T(U_p)) 
    = (j^! \mathcal{F})(T(U_p))
    = (T^* j^! \mathcal{F})_p$; 

  \item $(j_* T^* M)(U) = (T^* M)(U) = M(T(U)) = (j_* M)(T(U)) = (T^* j_* M)(U)$; 

  \item $j^* T^* \mathcal{F} = (T \circ j)^* \mathcal{F} = (j \circ T)^* \mathcal{F} = T^* j^* \mathcal{F}$ 
\end{enumerate}
for all $p \in P$ and Scott-open $U \subseteq P$. 
\end{proof}

\begin{proposition}\label{colax}
Let $P$ be a continuous poset. 
If the superlinear family of translations $T_{\bullet}$ has TR1, then the functors $j^!, j_*$ and $j^*$ 
preserve $\varepsilon$-interleavings. 
In particular, it follows that 
\begin{align*}
  d_{a}(j^! \mathcal{F}, j^! \mathcal{G}) \le d_{\sigma}(\mathcal{F}, \mathcal{G}), \quad 
  d_{\sigma}(j_* M, j_* N) \le d_{a}(M, N), \quad
  d_{a}(j^*\mathcal{F}, j^* \mathcal{G}) \le d_{\sigma}(\mathcal{F}, \mathcal{G}) 
\end{align*}
for all $M, N \in \Fun(\posetC, \Mod)$ and $\mathcal{F}, \mathcal{G} \in \Sh(P^{\sigma})$. 
\end{proposition}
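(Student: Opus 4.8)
The plan is to show that each of $j^!$, $j_*$ and $j^*$ sends an $\varepsilon$-interleaving to an $\varepsilon$-interleaving; the three distance inequalities then follow immediately by taking infima over $\varepsilon$. Throughout, condition TR1 ensures that every $T_{\varepsilon}$ is a Scott-continuous order-isomorphism, so Lemma \ref{TranslaatioKommutoi} applies to each of them, yielding natural isomorphisms $j^! T_{\varepsilon}^* \cong T_{\varepsilon}^* j^!$, $j_* T_{\varepsilon}^* \cong T_{\varepsilon}^* j_*$ and $j^* T_{\varepsilon}^* \cong T_{\varepsilon}^* j^*$. Recall also that $T_{\varepsilon}$ is then a homeomorphism for both topologies, so $T_{\varepsilon}^*$ is exact and is given on sections by $U \mapsto \mathcal{F}(T_{\varepsilon} U)$; in particular $T_{\varepsilon}^* T_{\varepsilon}^* = (T_{\varepsilon} \circ T_{\varepsilon})^*$.

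The one point that needs care is that the horizontal maps in the interleaving diagrams are the canonical ones. So first I would verify that, for each $F \in \{ j^!, j_*, j^* \}$ and every persistence module $M$, the square
\[
\begin{tikzcd}
  F(M) \arrow{r} \arrow{d}{\id} & F(T_{\varepsilon}^* M) \arrow{d}{\cong} \\
  F(M) \arrow{r} & T_{\varepsilon}^* F(M)
\end{tikzcd}
\]
commutes, where the top arrow is $F$ applied to the canonical morphism $M \to T_{\varepsilon}^* M$, the right vertical arrow is the commutation isomorphism of Lemma \ref{TranslaatioKommutoi}, and the bottom arrow is the canonical morphism of $F(M)$. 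This is a naturality statement: the canonical morphism $\id \Rightarrow T_{\varepsilon}^*$ is induced by the restriction maps $\mathcal{F}(U) \to \mathcal{F}(T_{\varepsilon} U)$ coming from $T_{\varepsilon} U \subseteq U$ (Proposition \ref{translationOpen}), and each of $j^!, j_*, j^*$ is defined via sections or stalks, hence commutes with these restriction maps; unwinding the explicit formulas in the proof of Lemma \ref{TranslaatioKommutoi} shows the square commutes. The same verification works with $M$ replaced by a Scott sheaf $\mathcal{F}$.

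Granting this, suppose $M$ and $N$ are $\varepsilon$-interleaved via $f \colon M \to T_{\varepsilon}^* N$ and $g \colon N \to T_{\varepsilon}^* M$. Applying $j_*$ and composing with the commutation isomorphisms produces morphisms $j_* M \to T_{\varepsilon}^* j_* N$ and $j_* N \to T_{\varepsilon}^* j_* M$. Applying $j_*$ to the two triangles of the interleaving diagram and inserting the commutation isomorphisms at every occurrence of $T_{\varepsilon}^*$ and $T_{\varepsilon}^* T_{\varepsilon}^*$, the resulting diagram for $j_* M, j_* N$ still commutes: $j_*$ preserves commutative diagrams, and by the compatibility square above each horizontal arrow is identified with the canonical morphism. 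Hence $j_* M$ and $j_* N$ are $\varepsilon$-interleaved, so $d_{\sigma}(j_* M, j_* N) \le \varepsilon$ whenever $M, N$ are $\varepsilon$-interleaved, which gives $d_{\sigma}(j_* M, j_* N) \le d_a(M, N)$. The arguments for $j^!$ and $j^*$, starting from an $\varepsilon$-interleaving of Scott sheaves, are identical and yield the other two inequalities.

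I expect the main obstacle to be the bookkeeping in the compatibility square of the second paragraph: one must check that the identifications $j_* T_{\varepsilon}^* = T_{\varepsilon}^* j_*$ and so on are exactly those arising from the explicit section formulas, so that the canonical, restriction-induced morphisms genuinely match up under the functors. Everything else is formal manipulation of commutative diagrams together with the observation that distance-non-increasing follows at once from preservation of $\varepsilon$-interleavings.
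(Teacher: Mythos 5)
Your proof is correct and follows essentially the same route as the paper's: apply the functor to the interleaving diagram and use Lemma \ref{TranslaatioKommutoi} (justified by TR1) to commute past $T_{\varepsilon}^*$, then take infima. The paper states this more tersely and leaves implicit the naturality check (your compatibility square) that the identifications $F T_{\varepsilon}^* \cong T_{\varepsilon}^* F$ carry canonical morphisms to canonical morphisms; you are right that this is the point requiring care, and making it explicit is a sensible addition.
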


\begin{proof}
We first prove that $j_*$ preserves $\varepsilon$-interleavings.
If $M$ and $N$ are $\varepsilon$-interleaved, 
then there exist morphisms $f \colon M \to T^*_{\varepsilon} N$ and $g \colon N \to T^*_{\varepsilon} M$, 
which make diagram (\ref{eq:Interleaving^*}) commute. 
By applying the functor $j_*$ to this diagram, 
we get a new diagram and morphisms $j_* f \colon j_* M \to j_* T^*_{\varepsilon} N$, $j_* g \colon j_* N \to j_* T^*_{\varepsilon} M$. 
The functor $j_*$ commutes with translations by the previous lemma. 
So, $j_*M$ and $j_* N$ are $\varepsilon$-interleaved. 
One can similarly show that the functors $j^!$ and $j^*$ preserve $\varepsilon$-interleavings. 

The second claim follows from the first one by the definition of the interleaving distance.
\end{proof}

The following Lemma \ref{InterleaviningLine} and Theorem \ref{stability} explain why the condition TR2 is important.

\begin{lemma}\label{InterleaviningLine}
Let $P$ be a continuous poset, and let $T \colon P \to P$ be a strong translation. 
For every persistence module $M$, there exist natural morphisms 
\begin{enumerate}[ i)]
  \item $\underline{f} \colon M \to T^* \underline{M}$ and $\underline{g} \colon \underline{M} \to T^* M$;  
  \item $\overline{f} \colon M \to T^* \overline{M}$ and $\overline{g} \colon \overline{M} \to T^* M$
\end{enumerate}
that make the diagrams 
\[
\begin{tikzcd}
  M \arrow{r} \arrow{rd}  & T^* M \arrow{r} \arrow{rd} & T^* T^*M \\
  \underline{M} \arrow{r} \arrow{ru} & T^* \underline{M} \arrow{r} \arrow{ru} & T^* T^* \underline{M}
\end{tikzcd}
\quad \text{and} \quad 
\begin{tikzcd}
  M \arrow{r} \arrow{rd}  & T^* M \arrow{r} \arrow{rd} & T^* T^*M \\
  \overline{M} \arrow{r} \arrow{ru} & T^* \overline{M} \arrow{r} \arrow{ru} & T^* T^* \overline{M}
\end{tikzcd}
\]
commute. 
\end{lemma}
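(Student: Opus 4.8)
The plan is to exploit the sheaf-theoretic descriptions $\underline M_p = M^{\dagger}(\Int U_p)$ and $\overline M_p = \hat M({\twoheaddownarrow} p)$ of Remark \ref{Rmk:LineSectionCosection}, together with $M_p = M^{\dagger}(U_p) = \hat M(D_p)$ and $(T^*M)_p = M_{T(p)}$; under these identifications all four morphisms become mere (co)restriction maps of the (co)sheaf, and the commutativity of the diagrams follows from functoriality. The only place the hypothesis that $T$ is a \emph{strong} translation enters is in making the needed (co)restrictions legal: since $p \ll T(p)$ we have $U_{T(p)} \subseteq \Int U_p$ (because $T(p) \in {\twoheaduparrow} p = \Int U_p$ and $\Int U_p$ is an up-set) and dually $D_p \subseteq {\twoheaddownarrow} T(p)$ (because $x \le p \ll T(p)$ forces $x \ll T(p)$); trivially $U_{T(p)} \subseteq U_p$ and ${\twoheaddownarrow} p \subseteq D_{T(p)}$ as well, and applying the same inclusions with $p$ replaced by $T(p)$ gives $U_{T^2(p)} \subseteq \Int U_{T(p)}$ and $D_{T(p)} \subseteq {\twoheaddownarrow} T^2(p)$.

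First I would define the maps pointwise. Set $\underline f_p \colon M^{\dagger}(U_p) \to M^{\dagger}(\Int U_{T(p)}) = (T^*\underline M)_p$ and $\underline g_p \colon M^{\dagger}(\Int U_p) = \underline M_p \to M^{\dagger}(U_{T(p)}) = M_{T(p)}$ to be the restriction maps of $M^{\dagger}$ (legal by the inclusions above). Dually, set $\overline f_p \colon \hat M(D_p) = M_p \to \hat M({\twoheaddownarrow} T(p)) = \overline M_{T(p)}$ and $\overline g_p \colon \hat M({\twoheaddownarrow} p) = \overline M_p \to \hat M(D_{T(p)}) = M_{T(p)}$ to be the natural maps induced by the corresponding inclusions of down-sets. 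Since the restriction maps of $M^{\dagger}$ and the corestriction maps of $\hat M$ are functorial in the subset and natural in $M$, each family is a morphism of persistence modules and is natural in $M$; this settles all naturality claims. (Concretely, $\underline g_p$ is the projection of $\varprojlim_{x \gg p} M_x$ onto its $T(p)$-component and $\overline f_p$ is the coprojection of $M_p$ into $\varinjlim_{x \ll T(p)} M_x$, both available precisely because $T(p) \gg p$.)

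For commutativity, observe that in each of the two diagrams the two triangles at the far left are automatic from naturality of the canonical transformation $\id \to T^*$, so only the two ``zigzag'' identities remain; for the first diagram these say that $(T^*\underline g)\circ \underline f$ and $(T^*\underline f)\circ \underline g$ coincide with the canonical morphisms $M \to T^*T^*M$ and $\underline M \to T^*T^*\underline M$. Evaluating at $p$ and using $(T^*\underline g)_p = \underline g_{T(p)} = \res^{\Int U_{T(p)}}_{U_{T^2(p)}}$ and $(T^*\underline f)_p = \underline f_{T(p)} = \res^{U_{T(p)}}_{\Int U_{T^2(p)}}$, transitivity of restriction gives $(T^*\underline g)_p\circ \underline f_p = \res^{U_p}_{U_{T^2(p)}} = M(p \le T^2(p))$ and $(T^*\underline f)_p\circ \underline g_p = \res^{\Int U_p}_{\Int U_{T^2(p)}} = \underline M(p \le T^2(p))$, which are exactly the canonical morphisms (recall $\underline M = j^! j_* M$ by Lemma \ref{jConnectionCont}, so its internal morphisms are restrictions). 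The second diagram is proved identically with $\hat M$ replacing $M^{\dagger}$ and all inclusions of subsets reversed in direction.

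The computation is entirely routine, so there is no genuine obstacle; the one substantive remark is that being a strong translation is exactly what supplies the inclusions $U_{T(p)} \subseteq \Int U_p$ and $D_p \subseteq {\twoheaddownarrow} T(p)$ (and their one-step iterates) that make the diagonal arrows $\underline g \colon \underline M \to T^*M$, $\overline f \colon M \to T^*\overline M$ exist at all and make the zigzag identities close up. If one only knew $p \le T(p)$, there would be no such maps.
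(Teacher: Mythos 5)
Your proof is correct and takes essentially the same route as the paper: you define $\underline g_p$ as the projection of $\varprojlim_{x\gg p}M_x$ onto the $T(p)$-component (legal because $T(p)\gg p$) and $\underline f$ as the obvious composite restriction, and you observe that all commutativity reduces to transitivity of restrictions in the (co)sheaf; the paper's proof is just a more compressed version of the same computation. Your write-up is a bit more explicit about separating the four commutativity conditions (two naturality squares of $\id \to T^*$ and two zigzag identities) and about exactly which inclusions the strong-translation hypothesis buys, which is a helpful expansion of the paper's one-line remark that ``the diagram commutes simply by the construction of morphisms.''
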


\begin{proof}
i) The first morphism is the composition $M \to \underline{M} \to T^* \underline{M}$. 
Since $T$ is a strong translation, we have $T(p) \gg p$ for all $p \in P$. 
Thus, the second morphism at $p$ is the canonical morphism of the limit 
\[
  \underline{M}_p = \varprojlim_{x \gg p} M_x \to M_{T(p)} = (T^*M)_p. 
\]
The diagram commutes simply by the construction of morphisms, which fundamentally are just restriction morphisms. 

ii) The proof is similar to that of i). 
\end{proof}

\begin{theorem}\label{stability}
Let $P$ be a continuous poset. 
If the superlinear family of translations $T_{\bullet}$ has TR2, then 
\begin{enumerate}[ i)]
  \item $d_a(M, \underline{M}) = 0$; 
  \item $d_a(M, \overline{M}) = 0$ 
\end{enumerate}
for all persistence modules $M$ over $P$. 
In particular, $d_a(M, 0) = 0$, when $M$ is ephemeral. 
\end{theorem}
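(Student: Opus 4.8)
The plan is to reduce everything to Lemma \ref{InterleaviningLine}, applied separately for each $\varepsilon > 0$ to the strong translation $T = T_{\varepsilon}$. First I would fix $\varepsilon > 0$ and invoke condition TR2, which says precisely that $T_{\varepsilon}$ is a strong translation; this is exactly the hypothesis needed to feed $T_{\varepsilon}$ into Lemma \ref{InterleaviningLine}. Part i) of that lemma then hands us natural morphisms $\underline{f} \colon M \to T_{\varepsilon}^* \underline{M}$ and $\underline{g} \colon \underline{M} \to T_{\varepsilon}^* M$ together with the commuting diagram displayed there.

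The key step is to recognize that this diagram is, verbatim, the diagram defining an $\varepsilon$-interleaving of the persistence modules $M$ and $\underline{M}$ with respect to the superlinear family $T_{\bullet}$: the horizontal arrows $M \to T_{\varepsilon}^* M \to T_{\varepsilon}^* T_{\varepsilon}^* M$ (and similarly for $\underline{M}$) are the canonical maps arising from $\id \le T_{\varepsilon}$ and from applying $T_{\varepsilon}^*$ to $M \to T_{\varepsilon}^* M$, i.e. the ``natural horizontal maps'' of the interleaving definition. Hence $M$ and $\underline{M}$ are $\varepsilon$-interleaved. Since $\varepsilon > 0$ was arbitrary, the definition of the interleaving distance gives $d_a(M, \underline{M}) \le \varepsilon$ for every $\varepsilon > 0$, and therefore $d_a(M, \underline{M}) = 0$. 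Running the same argument with part ii) of Lemma \ref{InterleaviningLine} in place of part i) yields $d_a(M, \overline{M}) = 0$, establishing i) and ii).

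For the final sentence, I would suppose $M$ is ephemeral and apply Proposition \ref{EphWayBelow2}, which gives $\underline{M} = 0$ (equivalently $\overline{M} = 0$); combined with i) this yields $d_a(M, 0) = d_a(M, \underline{M}) = 0$.

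I do not expect a real obstacle here. The only two points requiring a moment's care are: (a) checking that the diagram of Lemma \ref{InterleaviningLine} genuinely matches the official $\varepsilon$-interleaving diagram of Subsection~\ref{se:Interleavings}, in particular that its horizontal maps coincide with the canonical ones, so that ``commuting diagram in the lemma'' translates to ``$\varepsilon$-interleaved''; and (b) noting that although TR2 only supplies strong translations for $\varepsilon > 0$, taking the infimum over all $\varepsilon > 0$ already forces the distance to be $0$, so the value $\varepsilon = 0$ is never needed.
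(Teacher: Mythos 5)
Your proof is correct and follows essentially the same route as the paper: apply Lemma \ref{InterleaviningLine} with $T = T_{\varepsilon}$ (using TR2) to get an $\varepsilon$-interleaving of $M$ with $\underline{M}$ (resp.\ $\overline{M}$) for every $\varepsilon > 0$, then pass to the infimum. Your handling of the final claim via Proposition \ref{EphWayBelow2} is only superficially different from the paper's $d_a(M,0)=d_a(M,j^!j_*M)=0$, since $j^!j_*M = \underline{M}$ and that proposition is itself a restatement of Theorem \ref{EphWayBelow}.
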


\begin{proof}
To prove i), 
we first observe that $M$ and $\underline{M}$ are $\varepsilon$-interleaved for all $\varepsilon > 0$ by the previous lemma. 
Since the interleaving distance is the infimum of all these epsilons, we get $d_a(M, \underline{M}) = 0$. 
The proof of ii) is similar. 
If $M$ is ephemeral, then $j_* M = 0$ by Theorem \ref{EphWayBelow}. 
Thus, $d_a(M, 0) = d_a(M, j^! j_* M) = 0$ proving the last claim. 
\end{proof}

\subsection{Isometry Theorems}

Let $P$ be a continuous poset. 
We fix again a superlinear family $T_{\bullet} \colon \left[0, \infty \right[ \to \Trans_{P^{\sigma}}$.

\begin{theorem}\label{Isom1}
Let $P$ be a continuous poset. 
If the superlinear family of translations $T_{\bullet}$ has TR1 and TR2, then 
\begin{enumerate}[ i)]
  \item $d_{\sigma}(\mathcal{F}, \mathcal{G}) 
    = d_a(j^* \mathcal{F}, j^* \mathcal{G}) 
    = d_a(j^! \mathcal{F}, j^! \mathcal{G})$; 
  
  \item $d_a(M, N) = d_a(\underline{M}, \underline{N})$; 
  \item $d_a(M, N) = d_a(\overline{M}, \overline{N})$; 
  \item $d_a(M, N) = d_{\sigma}(j_* M, j_*N)$
\end{enumerate}
for all Scott-sheaves $\mathcal{F}, \mathcal{G}$ on $P^{\sigma}$ and persistence modules $M,N$ over $P$. 
\end{theorem}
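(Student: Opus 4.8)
The plan is to derive all four identities formally from results already in place: the one-directional inequalities of Proposition~\ref{colax} (valid under TR1), the vanishing $d_a(M, \underline{M}) = 0 = d_a(M, \overline{M})$ of Theorem~\ref{stability} (valid under TR2), the identities $\overline{M} = j^*j_*M$ and $\underline{M} = j^!j_*M$ from Lemma~\ref{jConnectionCont}, the retractions $j_*j^* = \id$ and $j_*j^! = \id$ from Proposition~\ref{FullyFaithfulAdjointCombo}, and the fact that $d_a$ and $d_{\sigma}$ are extended pseudometrics, so that the triangle inequality is available.

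First I would handle ii) and iii). For ii), the triangle inequality together with Theorem~\ref{stability} gives
\[
  d_a(\underline{M}, \underline{N}) \le d_a(\underline{M}, M) + d_a(M, N) + d_a(N, \underline{N}) = d_a(M, N),
\]
and symmetrically $d_a(M, N) \le d_a(M, \underline{M}) + d_a(\underline{M}, \underline{N}) + d_a(\underline{N}, N) = d_a(\underline{M}, \underline{N})$, hence equality. Item iii) is the same argument with $\overline{(-)}$ in place of $\underline{(-)}$.

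Next, item iv). Proposition~\ref{colax} gives $d_{\sigma}(j_*M, j_*N) \le d_a(M, N)$ directly. For the reverse I chain, using iii), then $\overline{M} = j^*j_*M$, then Proposition~\ref{colax} for $j^*$,
\[
  d_a(M, N) = d_a(\overline{M}, \overline{N}) = d_a(j^*j_*M, j^*j_*N) \le d_{\sigma}(j_*M, j_*N),
\]
and combining the two inequalities proves iv). For item i), Proposition~\ref{colax} already yields $d_a(j^*\mathcal{F}, j^*\mathcal{G}) \le d_{\sigma}(\mathcal{F}, \mathcal{G})$ and $d_a(j^!\mathcal{F}, j^!\mathcal{G}) \le d_{\sigma}(\mathcal{F}, \mathcal{G})$; for the reverse I would use that $j_*$ retracts both $j^*$ and $j^!$, so that, e.g.,
\[
  d_{\sigma}(\mathcal{F}, \mathcal{G}) = d_{\sigma}(j_*j^*\mathcal{F}, j_*j^*\mathcal{G}) \le d_a(j^*\mathcal{F}, j^*\mathcal{G}),
\]
and identically with $j^!$ in place of $j^*$. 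This forces both distances on the Alexandrov side to equal $d_{\sigma}(\mathcal{F}, \mathcal{G})$.

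I do not expect a genuine obstacle: the whole argument is formal juggling of pseudometric inequalities once Proposition~\ref{colax} and Theorem~\ref{stability} are in hand. The points requiring the most attention are keeping the directions of the inequalities consistent, and observing that item i) in fact uses only TR1 — TR2 enters exclusively through Theorem~\ref{stability}, i.e.\ only in items ii)--iv) — so the stated hypotheses are comfortably sufficient. It is also worth confirming, as the only non-formal input, that the triangle-inequality steps use nothing beyond $d_a$ and $d_{\sigma}$ being symmetric and subadditive, which is precisely the extended-pseudometric property recorded after Definition~\ref{ScottInterlea}.
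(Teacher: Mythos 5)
Your proof is correct and, in parts i)--iii), slightly more economical than the paper's. For parts ii) and iii), you derive both inequalities directly from the triangle inequality together with Theorem~\ref{stability}, whereas the paper uses the triangle inequality for only one direction and then invokes Proposition~\ref{colax} applied to $j^!j_*$ (via Lemma~\ref{jConnectionCont}) for the other; your symmetric argument is simpler and equally valid. For part i), the paper proves the reverse inequality by a direct argument: it takes an $\varepsilon$-interleaving between $j^!\mathcal{F}$ and $j^!\mathcal{G}$, uses Lemma~\ref{TranslaatioKommutoi} to commute $j^!$ past $T^*_\varepsilon$, and then invokes the fully-faithfulness of $j^!$ (Proposition~\ref{FullyFaithfulAdjointCombo}~iii)) to lift the interleaving morphisms to $\mathcal{F} \to T^*_\varepsilon \mathcal{G}$ and $\mathcal{G} \to T^*_\varepsilon \mathcal{F}$. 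You instead apply the already-established inequality $d_\sigma(j_*M, j_*N) \le d_a(M,N)$ from Proposition~\ref{colax} with $M = j^*\mathcal{F}$, $N = j^*\mathcal{G}$ (or with $j^!$) and use the retraction $j_*j^* = j_*j^! = \id$ from Proposition~\ref{FullyFaithfulAdjointCombo}~ii). The two arguments encode essentially the same content (fully-faithfulness and the colax-plus-retraction argument both reduce to $j_*$ reflecting the interleaving), but yours avoids re-running the interleaving diagram and is the more formal route. Part iv) matches the paper exactly. Your observation that part i) uses only TR1, with TR2 entering only through Theorem~\ref{stability} in parts ii)--iv), is accurate and also true of the paper's proof.
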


\begin{proof}
i) We have already shown in Proposition \ref{colax} 
that $d_a(j^! \mathcal{F}, j^! \mathcal{G}) \le d_{\sigma}(\mathcal{F}, \mathcal{G})$. 
To prove the converse inequality, suppose that $j^! \mathcal{F}$ and $j^! \mathcal{G}$ 
are $\varepsilon$-interleaved for some $\varepsilon > 0$. 
Thus, there are morphisms $f \colon j^! \mathcal{F} \to T^*_{\varepsilon} j^! \mathcal{G}$ 
and $g \colon j^! \mathcal{G} \to T^*_{\varepsilon} j^! \mathcal{F}$, 
which make diagram (\ref{eq:Interleaving^*}) commute. 
By utilizing Lemma \ref{TranslaatioKommutoi}, 
we also see that these are morphisms $j^! \mathcal{F} \to  j^! T^*_{\varepsilon} \mathcal{G}$ and 
$j^! \mathcal{G} \to j^! T^*_{\varepsilon}  \mathcal{F}$. 
Since $j^!$ is fully faithful by Proposition \ref{FullyFaithfulAdjointCombo}, 
there exist unique morphisms $\mathcal{F} \to T^*_{\varepsilon}\mathcal{G}$ 
and $\mathcal{G} \to T^*_{\varepsilon} \mathcal{F}$ making diagram (\ref{eq:Interleaving^*}) commute. 
Therefore, also $\mathcal{F}$ and $\mathcal{G}$ are $\varepsilon$-interleaved, 
and we get the equation $d_a(j^! \mathcal{F}, j^! \mathcal{G}) = d_{\sigma}(\mathcal{F}, \mathcal{G})$. 
The equation $d_a(j^* \mathcal{F}, j^* \mathcal{G}) = d_{\sigma}(\mathcal{F}, \mathcal{G})$ is proved exactly in the same way. 

ii) From the triangle inequality and Theorem \ref{stability}, we get 
\[
  d_a(M, N) 
  \le d_a(M, \underline{M}) + d_a(\underline{M}, \underline{N}) + d_a(\underline{N}, N) 
  = d_a(\underline{M}, \underline{N}). 
\]
On the other hand, $j^! j_*$ preserves $\varepsilon$-interleavings by Proposition \ref{colax}, 
hence $d_a(j^! j_* M, j^! j_* N) \le d_a(M, N)$. 

iii) The proof is similar to that of ii). 

iv) From Proposition \ref{colax} we obtain the inequality $d_{\sigma}(j_* M, j_* N) \le d_a(M, N)$. 
On the other hand, by iii), we have $d_a(M, N) = d_a(j^* j_*M, j^* j_* N)$. 
Again, $j^*$ preserves $\varepsilon$-interleavings, so $d_a(j^* j_* M, j^* j_* N) \le d_{\sigma}(j_* M, j_* N)$. 
Therefore, $d_{\sigma}(j_* M, j_* N) = d_a(M, N)$ as desired. 
\end{proof}

\begin{corollary}\label{Cor:Isom}
Let $P$ be a continuous poset.
If the superlinear family of translations $T_{\bullet}$ has TR1, TR2 and TR3, then 
\begin{enumerate}[ i)]
  \item $d_a(M, 0) = 0$ if and only if $M$ is ephemeral; 
  \item $d_{\sigma}(\mathcal{F}, 0) = 0$ if and only if $\mathcal{F} = 0$ 
\end{enumerate}
for all persistence modules $M$ over $P$ and Scott sheaves $\mathcal{F}$ on $P^{\sigma}$. 
\end{corollary}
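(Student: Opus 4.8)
The plan is to deduce both statements from the isometry theorem (Theorem~\ref{Isom1}) and the stability theorem (Theorem~\ref{stability}), with condition TR3 supplying the only genuinely new ingredient, which is needed for the forward implication of~i).

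For~i), one direction is already available: if $M$ is ephemeral then $d_a(M,0)=0$ by the final assertion of Theorem~\ref{stability} (which uses TR2 only). For the converse I would proceed as follows. An $\varepsilon$-interleaving induces an $\varepsilon'$-interleaving whenever $\varepsilon'\ge\varepsilon$, since $T_\varepsilon\le T_{\varepsilon'}$ (apply the order-preserving map $T_\varepsilon$ to $\id\le T_{\varepsilon'-\varepsilon}$ and then use superlinearity). Hence $d_a(M,0)=0$ forces $M$ and the zero module to be $\varepsilon$-interleaved for every $\varepsilon>0$. Taking $N=0$ in the interleaving diagram, the interleaving morphisms $f\colon M\to 0$ and $g\colon 0\to MT_\varepsilon$ are both zero, and commutativity identifies the natural morphism $M\to MT_\varepsilon T_\varepsilon$ with the composite $gT_\varepsilon\circ f$, hence it vanishes; pointwise this says $M(p\le T_\varepsilon(T_\varepsilon(p)))=0$ for all $p\in P$. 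Now fix $p\ll q$. By TR3 there is $\varepsilon>0$ with $p\le T_\varepsilon(p)\le q$; putting $\delta=\varepsilon/2>0$ and using superlinearity again gives $p\le T_\delta(T_\delta(p))\le T_\varepsilon(p)\le q$, so $M(p\le q)$ factors through the zero map $M(p\le T_\delta(T_\delta(p)))$ and is therefore zero. As $p\ll q$ was arbitrary, $M$ is ephemeral.

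For~ii), the implication $\mathcal{F}=0\Rightarrow d_\sigma(\mathcal{F},0)=0$ is trivial. For the converse I would specialize Theorem~\ref{Isom1}~i) to $\mathcal{G}=0$, obtaining $d_\sigma(\mathcal{F},0)=d_a(j^!\mathcal{F},j^!0)=d_a(j^!\mathcal{F},0)$ (using that $j^!$ is additive). Since this vanishes, part~i) shows that $j^!\mathcal{F}$ is ephemeral, hence $j_*j^!\mathcal{F}=0$ by Theorem~\ref{EphWayBelow}; but $j_*j^!=\id$ by Proposition~\ref{FullyFaithfulAdjointCombo}~ii), so $\mathcal{F}=j_*j^!\mathcal{F}=0$.

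The main obstacle, modest as it is, lies in~i): the interleaving diagram yields vanishing of the structure map over the \emph{twofold} composite $T_\varepsilon\circ T_\varepsilon$ rather than over $T_\varepsilon$ itself, so one cannot use the $\varepsilon$ produced by TR3 directly and must first halve it and appeal to superlinearity to bring $T_\delta\circ T_\delta$ below $T_\varepsilon$. Everything else is a formal consequence of results already established.
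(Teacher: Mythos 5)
Your proof is correct and follows essentially the same route as the paper's: for i), both arguments note that $\varepsilon$-interleaving with $0$ forces $M(p\le T_{\varepsilon}T_{\varepsilon}(p))=0$, then use TR3 together with superlinearity (halving $\varepsilon$) to squeeze $T_{\varepsilon/2}T_{\varepsilon/2}(p)$ between $p$ and $q$; for ii), both reduce to i) via $j_*j^!=\id$, Theorem~\ref{Isom1}, and Theorem~\ref{EphWayBelow}. The only cosmetic differences are that you spell out the monotonicity of interleavings and invoke Theorem~\ref{Isom1}~i) where the paper uses part~iv), neither of which changes the substance.
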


\begin{proof}
i) By Theorem \ref{stability}, we already know that $d_a(M, 0) = 0$ for all ephemeral modules $M$. 
Conversely, assume that $d_a(M, 0) = 0$. 
Thus, $M$ and $0$ are $\varepsilon$-interleaved for every $\varepsilon > 0$. 
Hence, by diagram (\ref{eq:Interleaving^*}), 
the morphism $M \to T^*_{\varepsilon} T^*_{\varepsilon} M$ is zero for every $\varepsilon > 0$ 
i.e.\ $M(p \le T_{\varepsilon}T_{\varepsilon}(p)) = 0$ for all $p \in P$ and $\varepsilon > 0$. 
We need to show that $M(p \le q) = 0$ for all $p \ll q$. 
By TR3, there exists $\varepsilon > 0$ such that $p \le T_{\varepsilon}(p) \le q$. 
Now by condition TR2, we have $p \ll T_{\varepsilon/2} T_{\varepsilon / 2}(p) \le T_{\varepsilon}(p) \le q$. 
So, $M(p \le q)$ factors through $M_{T_{\varepsilon / 2}T_{\varepsilon/2}(p)}$, which proves i). 

ii) Suppose that $d_{\sigma}(\mathcal{F}, 0) = 0$. 
Since $j_* j^! = \id$, we have 
\[
  0 = d_{\sigma}(\mathcal{F}, 0) = d_{\sigma}(j_* j^! \mathcal{F}, 0) = d_a(j^! \mathcal{F}, 0). 
\]
Thus, by i) the module $j^! \mathcal{F}$ is ephemeral.
Therefore, by Theorem \ref{EphWayBelow} $j_* j^! \mathcal{F} = 0$. 
But again, $j_* j^! = \id$, so $\mathcal{F} = 0$. 
\end{proof}

\bibliographystyle{plain}

\begin{bibdiv}
\begin{biblist}

\bib{AbramskyJung}{incollection}{,
    AUTHOR = {{Abramsky}, Samson},
    AUTHOR = {{Jung}, Achim},
     TITLE = {Domain theory},
 BOOKTITLE = {Handbook of logic in computer science, {V}ol. 3},
    SERIES = {Handb. Log. Comput. Sci.},
    VOLUME = {3},
     PAGES = {1--168},
 PUBLISHER = {Oxford Univ. Press, New York},
      YEAR = {1994},
      ISBN = {0-19-853762-X},
   MRCLASS = {68Q55 (03B70 06B35)},
}

\bib{Bauer_2020}{incollection}{,
    AUTHOR = {{Bauer}, Ulrich},
    AUTHOR = {{Lesnick}, Michael},
     TITLE = {Persistence diagrams as diagrams: a categorification of the
              stability theorem},
 BOOKTITLE = {Topological data analysis---the {A}bel {S}ymposium 2018},
    SERIES = {Abel Symp.},
    VOLUME = {15},
     PAGES = {67--96},
 PUBLISHER = {Springer, Cham},
      YEAR = {[2020] \copyright 2020},
      ISBN = {978-3-030-43407-6; 978-3-030-43408-3},
   MRCLASS = {55N31},
       DOI = {10.1007/978-3-030-43408-3\_3},
       URL = {https://doi.org/10.1007/978-3-030-43408-3_3},
}

\bib{Berkouk_2021}{article}{,
    AUTHOR = {{Berkouk}, Nicolas},
    AUTHOR = {{Petit}, Fran\c{c}ois},
     TITLE = {Ephemeral persistence modules and distance comparison},
   JOURNAL = {Algebr. Geom. Topol.},
  FJOURNAL = {Algebraic \& Geometric Topology},
    VOLUME = {21},
      YEAR = {2021},
    NUMBER = {1},
     PAGES = {247--277},
      ISSN = {1472-2747,1472-2739},
   MRCLASS = {55N31 (18A99 35A27)},
       DOI = {10.2140/agt.2021.21.247},
       URL = {https://doi.org/10.2140/agt.2021.21.247},
}

\bib{CohBBR}{article}{,
    AUTHOR = {{Brun}, Morten},
    AUTHOR = {{Bruns}, Winfried},
    AUTHOR = {{R\"{o}mer}, Tim},
     TITLE = {Cohomology of partially ordered sets and local cohomology of
              section rings},
   JOURNAL = {Adv. Math.},
  FJOURNAL = {Advances in Mathematics},
    VOLUME = {208},
      YEAR = {2007},
    NUMBER = {1},
     PAGES = {210--235},
      ISSN = {0001-8708,1090-2082},
   MRCLASS = {13D45 (06A11)},
       DOI = {10.1016/j.aim.2006.02.005},
       URL = {https://doi.org/10.1016/j.aim.2006.02.005},
}

\bib{Bubenik_2014}{article}{,
    AUTHOR = {{Bubenik}, Peter},
    AUTHOR = {{de Silva}, Vin},
    AUTHOR = {{Scott}, Jonathan},
     TITLE = {Metrics for generalized persistence modules},
   JOURNAL = {Found. Comput. Math.},
  FJOURNAL = {Foundations of Computational Mathematics. The Journal of the
              Society for the Foundations of Computational Mathematics},
    VOLUME = {15},
      YEAR = {2015},
    NUMBER = {6},
     PAGES = {1501--1531},
      ISSN = {1615-3375,1615-3383},
   MRCLASS = {55N35 (55U10)},
       DOI = {10.1007/s10208-014-9229-5},
       URL = {https://doi.org/10.1007/s10208-014-9229-5},
}

\bib{Bubenik_2021}{article}{,
    AUTHOR = {{Bubenik}, Peter}, 
    AUTHOR = {{Mili\'{c}evi\'{c}}, Nikola},
     TITLE = {Homological algebra for persistence modules},
   JOURNAL = {Found. Comput. Math.},
  FJOURNAL = {Foundations of Computational Mathematics. The Journal of the
              Society for the Foundations of Computational Mathematics},
    VOLUME = {21},
      YEAR = {2021},
    NUMBER = {5},
     PAGES = {1233--1278},
      ISSN = {1615-3375,1615-3383},
   MRCLASS = {55N31 (18G05 18G15 55U25)},
       DOI = {10.1007/s10208-020-09482-9},
       URL = {https://doi.org/10.1007/s10208-020-09482-9},
}

\bib{Burban_2017}{article}{,
    AUTHOR = {{Burban}, Igor}, 
    AUTHOR = {{Drozd}, Yuriy}, 
    AUTHOR = {{Gavran}, Volodymyr},
     TITLE = {Minors and resolutions of non-commutative schemes},
   JOURNAL = {Eur. J. Math.},
  FJOURNAL = {European Journal of Mathematics},
    VOLUME = {3},
      YEAR = {2017},
    NUMBER = {2},
     PAGES = {311--341},
      ISSN = {2199-675X,2199-6768},
   MRCLASS = {14A22 (14F05)},
       DOI = {10.1007/s40879-017-0128-6},
       URL = {https://doi.org/10.1007/s40879-017-0128-6},
}

\bib{chazal2013structure}{misc}{,
    AUTHOR = {Chazal, Fr\'{e}d\'{e}ric}, 
    AUTHOR = {{de Silva}, Vin},
    AUTHOR = {{Glisse}, Marc},
    AUTHOR = {{Oudot}, Steve},
    TITLE = {The structure and stability of persistence modules},
    SERIES = {SpringerBriefs in Mathematics},
 PUBLISHER = {Springer, [Cham]},
      YEAR = {2016},
     PAGES = {x+120},
      ISBN = {978-3-319-42545-0},
   MRCLASS = {55N10 (16G20 55U10)},
       DOI = {10.1007/978-3-319-42545-0},
}

\bib{Obs}{article}{,
    AUTHOR = {Chazal, Fr\'{e}d\'{e}ric}, 
    AUTHOR = {{Crawley-Boevey}, William},
    AUTHOR = {{de Silva}, Vin},
     TITLE = {The observable structure of persistence modules},
   JOURNAL = {Homology Homotopy Appl.},
  FJOURNAL = {Homology, Homotopy and Applications},
    VOLUME = {18},
      YEAR = {2016},
    NUMBER = {2},
     PAGES = {247--265},
      ISSN = {1532-0073,1532-0081},
   MRCLASS = {55U05 (55N99)},
       DOI = {10.4310/HHA.2016.v18.n2.a14},
       URL = {https://doi.org/10.4310/HHA.2016.v18.n2.a14},
}

\bib{Curry1}{book}{,
    AUTHOR = {{Curry}, Justin Michael},
     TITLE = {Sheaves, cosheaves and applications},
      NOTE = {Thesis (Ph.D.)--University of Pennsylvania},
 PUBLISHER = {ProQuest LLC, Ann Arbor, MI},
      YEAR = {2014},
     PAGES = {317},
      ISBN = {978-1303-96615-6},
   MRCLASS = {99-05},
       URL =
              {http://gateway.proquest.com/openurl?url_ver=Z39.88-2004&rft_val_fmt=info:ofi/fmt:kev:mtx:dissertation&res_dat=xri:pqm&rft_dat=xri:pqdiss:3623819},
}

\bib{desilva2018theory}{article}{,
    AUTHOR = {{de Silva}, Vin},
    AUTHOR = {{Munch}, Elizabeth},
    AUTHOR = {{Stefanou}, Anastasios},
     TITLE = {Theory of interleavings on categories with a flow},
   JOURNAL = {Theory Appl. Categ.},
  FJOURNAL = {Theory and Applications of Categories},
    VOLUME = {33},
      YEAR = {2018},
     PAGES = {Paper No. 21, 583--607},
      ISSN = {1201-561X},
   MRCLASS = {18D05 (18C10 18D10 18D20 55N35)},
}

\bib{eisenbudCommAlgebra}{book}{,
    AUTHOR = {{Eisenbud}, David},
     TITLE = {Commutative algebra},
    SERIES = {Graduate Texts in Mathematics},
    VOLUME = {150},
      NOTE = {With a view toward algebraic geometry},
 PUBLISHER = {Springer-Verlag, New York},
      YEAR = {1995},
     PAGES = {xvi+785},
      ISBN = {0-387-94268-8; 0-387-94269-6},
   MRCLASS = {13-01 (14A05)},
       DOI = {10.1007/978-1-4612-5350-1},
       URL = {https://doi.org/10.1007/978-1-4612-5350-1},
}

\bib{gierz2003}{book}{,
    AUTHOR = {{Gierz}, G.},
    AUTHOR = {{Hofmann}, K. H.},
    AUTHOR = {{Keimel}, K.},
    AUTHOR = {{Lawson}, J. D.}, 
    AUTHOR = {{Mislove}, M.},
    AUTHOR = {{Scott}, D. S.},
     TITLE = {Continuous lattices and domains},
    SERIES = {Encyclopedia of Mathematics and its Applications},
    VOLUME = {93},
 PUBLISHER = {Cambridge University Press, Cambridge},
      YEAR = {2003},
     PAGES = {xxxvi+591},
      ISBN = {0-521-80338-1},
   MRCLASS = {06-00 (06B35 54H12 68Q55)},
       DOI = {10.1017/CBO9780511542725},
       URL = {https://doi.org/10.1017/CBO9780511542725},
}

\bib{goubault-larrecq_2013}{book}{,
    AUTHOR = {{Goubault-Larrecq}, Jean},
     TITLE = {Non-{H}ausdorff topology and domain theory},
    SERIES = {New Mathematical Monographs},
    VOLUME = {22},
      NOTE = {[On the cover: Selected topics in point-set topology]},
 PUBLISHER = {Cambridge University Press, Cambridge},
      YEAR = {2013},
     PAGES = {vi+491},
      ISBN = {978-1-107-03413-6},
   MRCLASS = {54-01 (54D10 54H10 54H99)},
       DOI = {10.1017/CBO9781139524438},
       URL = {https://doi.org/10.1017/CBO9781139524438},
}

\bib{Hart}{book}{,
  AUTHOR = {Hartshorne, Robin},
  TITLE = {Algebraic geometry},
  NOTE = {Graduate Texts in Mathematics, No. 52},
  PUBLISHER = {Springer-Verlag},
  ADDRESS = {New York},
  YEAR = {1977},
  PAGES = {xvi+496},
  ISBN = {0-387-90244-9},
  MRCLASS = {14-01},
}

\bib{Hoppner}{article}{,
  author = {{Höppner}, Michael},
  journal = {Manuscripta mathematica},
  pages = {45-50},
  title = {A Note on the Structure of Injective Diagrams.},
  url = {http://eudml.org/doc/154957},
  volume = {44},
  year = {1983},
}

\bib{HoppnerProjectivesFree}{article}{,
  title = {Projective diagrams over partially ordered sets are free},
  journal = {Journal of Pure and Applied Algebra},
  volume = {20},
  number = {1},
  pages = {7-12},
  year = {1981},
  issn = {0022-4049},
  doi = {https://doi.org/10.1016/0022-4049(81)90045-1},
  url = {https://www.sciencedirect.com/science/article/pii/0022404981900451},
  author = {{Höppner}, Michael},
  author = {{Lenzing}, Helmut}
}

\bib{JensenRLim}{article}{,
    AUTHOR = {{Jensen}, C. U.},
     TITLE = {On the vanishing of {$\varprojlim\sp{(i)}$}},
   JOURNAL = {J. Algebra},
  FJOURNAL = {Journal of Algebra},
    VOLUME = {15},
      YEAR = {1970},
     PAGES = {151--166},
      ISSN = {0021-8693},
       DOI = {10.1016/0021-8693(70)90071-2},
       URL = {https://doi.org/10.1016/0021-8693(70)90071-2},
}

\bib{SheavesOnManifolds}{book}{,
    AUTHOR = {{Kashiwara}, Masaki},
    AUTHOR = {{Schapira}, Pierre},
     TITLE = {Sheaves on manifolds},
    SERIES = {Grundlehren der mathematischen Wissenschaften [Fundamental
              Principles of Mathematical Sciences]},
    VOLUME = {292},
      NOTE = {With a chapter in French by Christian Houzel,
              Corrected reprint of the 1990 original},
 PUBLISHER = {Springer-Verlag, Berlin},
      YEAR = {1994},
     PAGES = {x+512},
      ISBN = {3-540-51861-4},
   MRCLASS = {58G07 (18F20 32C38 35A27)},
}

\bib{kashiwara2018persistent}{article}{,
    AUTHOR = {Kashiwara, Masaki},
    AUTHOR = {{Schapira}, Pierre},
     TITLE = {Persistent homology and microlocal sheaf theory},
   JOURNAL = {J. Appl. Comput. Topol.},
  FJOURNAL = {Journal of Applied and Computational Topology},
    VOLUME = {2},
      YEAR = {2018},
    NUMBER = {1-2},
     PAGES = {83--113},
      ISSN = {2367-1726,2367-1734},
   MRCLASS = {55N35 (18F20 35A27)},
       DOI = {10.1007/s41468-018-0019-z},
       URL = {https://doi.org/10.1007/s41468-018-0019-z},
}

\bib{Keimel_2009}{article}{,
  title = {Bicontinuous Domains and Some Old Problems in Domain Theory},
  journal = {Electronic Notes in Theoretical Computer Science},
  volume = {257},
  pages = {35-54},
  year = {2009},
  note = {Proceedings of the Fifth International Symposium on Domain Theory (ISDT 2009)},
  issn = {1571-0661},
  doi = {https://doi.org/10.1016/j.entcs.2009.11.025},
  url = {https://www.sciencedirect.com/science/article/pii/S1571066109004757},
  author = {{Keimel}, Klaus},
}

\bib{Krause1997}{article}{,
    AUTHOR = {{Krause}, Henning},
     TITLE = {The spectrum of a locally coherent category},
   JOURNAL = {J. Pure Appl. Algebra},
  FJOURNAL = {Journal of Pure and Applied Algebra},
    VOLUME = {114},
      YEAR = {1997},
    NUMBER = {3},
     PAGES = {259--271},
      ISSN = {0022-4049,1873-1376},
   MRCLASS = {18E15 (03C60 18E35 18E40)},
       DOI = {10.1016/S0022-4049(95)00172-7},
       URL = {https://doi.org/10.1016/S0022-4049(95)00172-7},
}

\bib{KrauseHomTheoryRep}{book}{,
    AUTHOR = {{Krause}, Henning},
     TITLE = {Homological theory of representations},
    SERIES = {Cambridge Studies in Advanced Mathematics},
    VOLUME = {195},
 PUBLISHER = {Cambridge University Press, Cambridge},
      YEAR = {2022},
     PAGES = {xxxiv+482},
      ISBN = {978-1-108-83889-4},
   MRCLASS = {16G10 (16E35 16E45 16G20 18E10 18G80 20C15)},
}

\bib{MacLane1971}{book}{,
  address = {New York},
  author = {{MacLane}, Saunders},
  mrclass = {18-02},
  note = {Graduate Texts in Mathematics, Vol. 5},
  pages = {ix+262},
  publisher = {Springer-Verlag},
  title = {Categories for the Working Mathematician},
  year = {1971},
}

\bib{Miller4}{article}{,
    AUTHOR = {{Miller}, Ezra},
     TITLE = {Planar graphs as minimal resolutions of trivariate monomial
              ideals},
   JOURNAL = {Doc. Math.},
  FJOURNAL = {Documenta Mathematica},
    VOLUME = {7},
      YEAR = {2002},
     PAGES = {43--90},
      ISSN = {1431-0635,1431-0643},
   MRCLASS = {05C10 (13D02)},
}

\bib{Miller2}{misc}{,
  doi = {10.48550/ARXIV.1709.08155},
  url = {https://arxiv.org/abs/1709.08155},
  author = {{Miller}, Ezra},
  title = {Data structures for real multiparameter persistence modules},
  publisher = {arXiv},
  year = {2017},
}

\bib{Miller1}{misc}{,
  doi = {10.48550/ARXIV.2008.03819},
  url = {https://arxiv.org/abs/2008.03819},
  author = {{Miller}, Ezra},
  title = {Essential graded algebra over polynomial rings with real exponents},
  publisher = {arXiv},
  year = {2020},
}

\bib{Miller3}{article}{,
    AUTHOR = {{Miller}, Ezra},
     TITLE = {Stratifications of real vector spaces from constructible
              sheaves with conical microsupport},
   JOURNAL = {J. Appl. Comput. Topol.},
  FJOURNAL = {Journal of Applied and Computational Topology},
    VOLUME = {7},
      YEAR = {2023},
    NUMBER = {3},
     PAGES = {473--489},
      ISSN = {2367-1726,2367-1734},
   MRCLASS = {32B20 (13D02 14F06 32B25 32S60 55N31 62R40)},
       DOI = {10.1007/s41468-023-00112-1},
       URL = {https://doi.org/10.1007/s41468-023-00112-1},
}

\bib{polterovich2015autonomous}{article}{,
    AUTHOR = {{Polterovich}, Leonid},
    AUTHOR = {{Shelukhin}, Egor},
     TITLE = {Autonomous {H}amiltonian flows, {H}ofer's geometry and
              persistence modules},
   JOURNAL = {Selecta Math. (N.S.)},
  FJOURNAL = {Selecta Mathematica. New Series},
    VOLUME = {22},
      YEAR = {2016},
    NUMBER = {1},
     PAGES = {227--296},
      ISSN = {1022-1824,1420-9020},
   MRCLASS = {53D05 (37K05)},
       DOI = {10.1007/s00029-015-0201-2},
       URL = {https://doi.org/10.1007/s00029-015-0201-2},
}

\bib{Poncet_2022}{article}{,
    AUTHOR = {{Poncet}, Paul},
     TITLE = {Transporting continuity properties from a poset to its subposets},
   JOURNAL = {Theoret. Comput. Sci.},
  FJOURNAL = {Theoretical Computer Science},
    VOLUME = {912},
      YEAR = {2022},
     PAGES = {109--132},
      ISSN = {0304-3975,1879-2294},
   MRCLASS = {06B35},
       DOI = {10.1016/j.tcs.2022.02.021},
       URL = {https://doi.org/10.1016/j.tcs.2022.02.021},
}

\bib{popescu}{book}{,
    AUTHOR = {{Popescu}, N.},
     TITLE = {Abelian categories with applications to rings and modules},
    SERIES = {London Mathematical Society Monographs},
    VOLUME = {No. 3},
 PUBLISHER = {Academic Press, London-New York},
      YEAR = {1973},
     PAGES = {xii+467},
   MRCLASS = {18EXX (16A62)},
}

\bib{recollements}{article}{,
    AUTHOR = {{Psaroudakis}, Chrysostomos},
    AUTHOR = {{Vit\'{o}ria}, Jorge},
     TITLE = {Recollements of module categories},
   JOURNAL = {Appl. Categ. Structures},
  FJOURNAL = {Applied Categorical Structures. A Journal Devoted to
              Applications of Categorical Methods in Algebra, Analysis,
              Order, Topology and Computer Science},
    VOLUME = {22},
      YEAR = {2014},
    NUMBER = {4},
     PAGES = {579--593},
      ISSN = {0927-2852,1572-9095},
   MRCLASS = {18E35 (16S90 18E40)},
       DOI = {10.1007/s10485-013-9323-x},
       URL = {https://doi.org/10.1007/s10485-013-9323-x},
}

\bib{RichterCat}{book}{,
    AUTHOR = {{Richter}, Birgit},
     TITLE = {From categories to homotopy theory},
    SERIES = {Cambridge Studies in Advanced Mathematics},
    VOLUME = {188},
 PUBLISHER = {Cambridge University Press, Cambridge},
      YEAR = {2020},
     PAGES = {x+390},
      ISBN = {978-1-108-47962-2},
   MRCLASS = {18-01 (18Nxx 55U40)},
       DOI = {10.1017/9781108855891},
       URL = {https://doi.org/10.1017/9781108855891},
}

\bib{rockafellarConvex}{book}{,
  address = {Princeton, N. J.},
  author = {{Rockafellar}, R. Tyrrell},
  publisher = {Princeton University Press},
  series = {Princeton Mathematical Series},
  title = {Convex analysis},
  year = {1970},
}

\bib{Schmahl_2022}{article}{,
    AUTHOR = {{Schmahl}, Maximilian},
     TITLE = {Structure of semi-continuous q-tame persistence modules},
   JOURNAL = {Homology Homotopy Appl.},
  FJOURNAL = {Homology, Homotopy and Applications},
    VOLUME = {24},
      YEAR = {2022},
    NUMBER = {1},
     PAGES = {117--128},
      ISSN = {1532-0073,1532-0081},
   MRCLASS = {55N31 (16G20)},
       DOI = {10.4310/hha.2022.v24.n1.a6},
       URL = {https://doi.org/10.4310/hha.2022.v24.n1.a6},
}

\bib{Scoccola2020LocallyPC}{book}{,
  AUTHOR = {{Scoccola}, Luis N.},
  TITLE = {Locally {P}ersistent {C}ategories and {M}etric {P}roperties of {I}nterleaving {D}istances},
  NOTE = {Thesis (Ph.D.)--The University of Western Ontario (Canada)},
  PUBLISHER = {ProQuest LLC, Ann Arbor, MI},
  YEAR = {2020},
  PAGES = {192},
  ISBN = {979-8845-49316-3},
  MRCLASS = {99-05},
  URL = {http://gateway.proquest.com/openurl?url_ver=Z39.88-2004\&rft_val_fmt=info:ofi/fmt:kev:mtx:dissertation\&res_dat=xri:pqm\&rft_dat=xri:pqdiss:29247464},
}

\bib{XuContPoset}{article}{,
  AUTHOR = {{Xu}, Luoshan},
  TITLE = {Continuity of posets via {S}cott topology and sobrification},
  JOURNAL = {Topology Appl.},
  FJOURNAL = {Topology and its Applications},
  VOLUME = {153},
  YEAR = {2006},
  NUMBER = {11},
  PAGES = {1886--1894},
  ISSN = {0166-8641,1879-3207},
  MRCLASS = {06B35 (06A11 54C35 54H10)},
  DOI = {10.1016/j.topol.2004.02.024},
  URL = {https://doi.org/10.1016/j.topol.2004.02.024},
}

\end{biblist}
\end{bibdiv}

\end{document}